\def\[#1\]{\begin{align*}#1\end{align*}}
\def\be#1\ee{\begin{align}#1\end{align}}
\def\bea#1\eea{\begin{align}#1\end{align}}
\def\ben#1\een{\begin{align*}#1\end{align*}}
\newcommand{\n}[1]{\left\Vert #1\right\Vert}
\newcommand{\la}{\left\langle}
\newcommand{\ra}{\right\rangle}
\newcommand{\R}{\mathbb{R}}
\newcommand{\N}{\mathbb{N}}
\newcommand{\mc}[1]{\mathcal{#1}}
\newcommand{\s}{\mc}
\newcommand{\p}[2]{\frac{\partial #1}{\partial #2}}
\def\ol#1{\overline{#1}}
\newcommand{\qmatrix}[1]{ \left( \begin{matrix} #1 \end{matrix} \right) }
\def\[#1\]{\begin{align*}#1\end{align*}}
\def\be#1\ee{\begin{align}#1\end{align}}
\def\bea#1\eea{\begin{align}#1\end{align}}
\def\ben#1\een{\begin{align*}#1\end{align*}}
\newtheoremstyle{theorem}{0.5cm}{0.5cm}%
\theoremstyle{theorem}
\newtheorem{theorem}{Theorem}[section]
\newtheorem{example}[theorem]{Example}
\newtheorem{corollary}[theorem]{Corollary}
\newtheorem{remark}[theorem]{Remark}
\newtheorem{lemma}[theorem]{Lemma}
\begin{document}
\title[On  linear hypersingular BTE and its variational formulation]
{On linear hypersingular Boltzmann transport equation and its variational formulation }

%\thanks{}
%\subjclass[]{35Q20, 45E99, 35R09}

%\keywords{Linear Boltzmann equation,  hyper-singular integral operators, variational formulation, charged particle transport}
 
\author{Jouko Tervo}
\address{University of Eastern Finland, Department of Applied Physics, P.O.Box 1627, 70211 Kuopio, Finland}
\email{040tervo@gmail.com}
 
\date{\today}

\maketitle

\begin{abstract}

For charged particle transport the linear Boltzmann transport equation (BTE) turns out to be a partial hyper-singular  integro-differential operator. 
This is due to the fact that
the related differential cross-sections $\sigma(x,\omega',\omega,E',E)$
may have hyper-singularities. In these cases the energy integral appearing in the collision terms  must be interpreted as the  Hadamard finite part integral leading to hyper-singular integral operators.
The article considers a refined expression for the exact transport operator and related variational formulations of the inflow initial boundary value problem for one particle equation  containing hyper-singularities. 
We find that the exact BTE contains the first-order partial derivatives with respect to energy combined by partial Hadamard (first-order) singular integral operators. In addition, it contains the second-order partial derivatives with respect to angle and some mixed terms.
The analysis will be carried out only for the so called M\o ller-type interaction (scattering) which is a kind of prototype of hyper-singular interactions. The generalizations to other type of collisions, such as to Bremsstrahlung, go analogously.
We also expose a weak form (the variational formulation) of the hyper-singular transport problem. 
Another variant variational formulation 
decreases the level of singularities in the integration (appearing in the due bilinear form) containing only singularities of order one that is, singularities like ${1\over{E'-E}}dE' dE$.
The variational formulation is an essential step in order to show the existence of generalized solutions e.g. by Lions-Lax-Milgram Theorem based methods (proceedings for solution spaces and existence theory are omitted here). The corresponding approximative transport operator is deduced. It turns out to be a CSDA-Fokker-Planck type operator.
\end{abstract}

{\small \textbf{Keywords: } Linear Boltzmann equation,  hyper-singular integral operators, variational formulation, charged particle transport}

{\small \textbf {AMS-Classification:}  35Q20, 45E99, 35R09}

\section{Introduction}\label{intro}

The \emph{Boltzmann transport equation} (BTE) models changes of the number density of particles in phase space whose variables are position, velocity direction (angle) and energy.
For general theory of linear BTEs with relevant boundary conditions we refer to \cite{dautraylionsv6} and \cite{agoshkov}. In \cite{case}, \cite{cercignani}, \cite{duderstadt}, \cite{pomraning}  the subject is considered from a more physical point of view.
Some more recent issues (including spectral and certain inverse problems) is exposed in \cite{mokhtarkharroubi},
and  general non-linear transport theory e.g. in \cite{ukai}, \cite{bellamo}.
A  mathematical survey of non-linear collision theory of particle transport 
is given in \cite{villani}.

In the case of charged particle transport the linear BTE turns out to be a so called \emph{partial hyper-singular  integro-differential operator}. For theoretical and computational reasons it has  been approximated by more simple models, including e.g. in the Continuous Slowing Down Approximation (CSDA) (e.g. \cite{wilson}, \cite{frank10}, \cite{tervo18-up}) and the linear Fokker-Plank approximation
(e.g. \cite{pomraning92}). In these approximations the resulting operator is  a pure partial integro-differential operator without hyper-singular integrals. In \cite{tervo18} or more extensively in \cite{tervo18-up} we gave a (non-conventional) partial integro-pseudo-differential approximation for  certain kind of charged particle transports. In this paper we investigate more closely the exact partial hyper-singular integral operator related to the so called \emph{M\o ller scattering}.

The transport of relevant particles (among others in dose calculation for radiation therapy, \cite{lorence})
can be {\it formally}  modelled by the following \emph{linear coupled system of BTEs}
\bea\label{intro1}
(\omega\cdot\nabla_x\psi_j)(x,\omega,E)&+
\Sigma_j(x,\omega,E)\psi_j(x,\omega,E)\nonumber\\
&
-(K_j\psi)(x,\omega,E)=f_j(x,\omega,E),\ (x,\omega,E)\in G\times S\times I
\eea
for $j=1,\cdots, N$, combined with an \emph{inflow boundary condition}
\be\label{intro2}
{\psi_j}_{|\Gamma_-}=g_j,\quad j=1,\cdots,N
\ee
where 
\be\label{intro3}
(K_j\psi)(x,\omega,E)=\sum_{k=1}^N\int_{S_{n-1}'\times I'}\sigma_{kj}(x,\omega',\omega,E',E)\psi_k(x,\omega',E') d\omega' dE'.
\ee
Here $G\subset\R^n$ is the spatial domain. In applications $n$ is typically $1,\ 2$ or $3$.  $S_{n-1}\subset \R^n$ is the unit sphere and $I=[E_0,E_m]$ is the energy interval. There are some benefits to choose $G\times S\times I$ for the state space instead of $G\times V$ where $V$ is the velocity space. 
Nevertheless, the below analysis could be carried out in $G\times V$ as well.
The set $\Gamma_-$ is "the inflow boundary part of $\partial G$" (see section \ref{pre}). $N$ assigns the number of evolving particles.
Above $(\omega',E')$ refers to the angle and energy of the incoming particle whereas $(\omega,E)$ refers to the angle and energy of the leaving particle.
On the right in (\ref{intro1}), the functions $f_j$ represent (internal) sources and in (\ref{intro2}) $g_j$  are (inflow) boundary sources.
The system is coupled through the integral operators $K_j$. The solution $\psi=(\psi_1,\cdots,\psi_N)$ of the problem (\ref{intro1})-(\ref{intro2}) 
is a vector-valued function whose components describe the radiation fluxes of various particles under consideration.
The equation (\ref{intro1}) is a steady state counterpart of its dynamical (time-dependent) equation. In many applications
it is sufficient to consider only the steady state equations because the
flux $\psi$ reaches the steady state nearly instantly. 
For simplicity we restrict ourselves to the case $n=3$ and  we denote $S=S_2$.

The differential cross-sections $\sigma_{kj}(x,\omega',\omega,E',E)$
may have singularities, or even \\ \emph{hyper-singularities}, and in these cases the integral $\int_{ I'}$ appearing in the collision terms $K_j$ must be interpreted as the {\it Hadamard finite part integral} which we denote by ${\rm p.f.}\int_{ I'}$. Moreover, the differential cross sections may contain Dirac's $\delta$-distributions with respect to $\omega$.
In \cite{tervo18-up} we  presented some details of real, physical collision operators.
In particular, we found that certain differential cross sections may contain hyper-singularities like ${1\over{(E'-E)^2}}dE'$.
These kind of singularities lead to extra pseudo-differential like (or approximately partial differential) terms in the transport equation (\cite{tervo18-up}, section 2.3). We also remark that in these cases additionally an \emph{initial condition} (or conditions) with respect to $E$ must be imposed to obtain mathematically (and physically) well-posed problems.
The analysis presented in \cite{tervo18-up} revealed the exact form of transport operators for certain interactions (collisions).

The existence of solutions for the problem (\ref{intro1}), (\ref{intro2}),
(as well as for the time-dependent problem with the due initial condition) has been studied for  coupled systems  in \cite{tervo17-up}
(the results of which remaining valid, after slight modifications, for any $1\leq p<\infty$) and for  single equations e.g. in \cite{egger14}, \cite{dautraylionsv6}, \cite{agoshkov}.
In these references it is assumed that the 
collision operator $K$ satisfies the so-called \emph{(partial) Schur criterion} (for boundedness) which is not valid for all species of particle interactions.
In \cite{tervo17}, \cite{tervo18} (or more comprehensively in \cite{tervo18-up}) we studied systematically the existence of solutions for the CSDA-system which is an approximation of the hyper-singular BTE-equation.
It extended the results of \cite{frank10}, where a spatially homogeneous stopping power was assumed. 
In this article we derive a refined expression for the exact transport operator and the related variational formulation of the inflow initial boundary value problem for one particle equation ($N=1$) containing hyper-singularities. 
The analysis will be carried out only for the M\o ller-type scattering which is a kind of prototype of hyper-singular interactions. The generalizations to other type of collisions (such as to Bremsstrahlung) go analogously.

For the first instance,
in  section \ref{moller} we  consider shortly  the M\o ller scattering.  This interaction models the electron's (and positron's) inelastic collisions in particle transport. We expose the {\it hyper-singular partial integral} expression and its {\it pseudo-differential like} expression obtained in \cite{tervo18-up} for the corresponding collision operator $K$ (and transport operator).  
These expressions are still quite implicit and the refined expression of the transport operator, say $T$, will be given  in section \ref{exact}. It turns out that $T$ is of the form (Theorem \ref{ex-eq})
\bea\label{intro-exact}
&
(T\psi)(x,\omega,E)=
-{\partial\over{\partial E}}\Big(
{\s H}_1\big((\ol{\s K}_2\psi)(x,\omega,\cdot,E)\big)(E)\Big)
-
2\pi\ 
\hat\sigma_2(x,E,E){\p \psi{E}}(x,\omega,E)\nonumber\\
&
-\hat\sigma_2(x,E,E)
\sum_{|\alpha|\leq 2}a_{\alpha}(E,\omega)(\partial_{\omega}^\alpha\psi)(x,\omega,E)\nonumber\\
&
+{\rm p.f.}\int_E^{E_m}{1\over{E'-E}}
\hat\sigma_2(x,E',E)\int_{0}^{2\pi}\la \nabla_S\psi(x,\gamma(E',E,\omega)(s),E'),{\p \gamma{E}}(E',E,\omega)(s)\ra ds dE'
\nonumber\nonumber\\
&
+
{\rm p.f.}\int_E^{E_m}{1\over{E'-E}}
{\p {\hat\sigma_2}{E}}(x,E',E)\int_{0}^{2\pi}\psi(x,\gamma(E',E,\omega)(s),E')ds dE'
\nonumber\\
&
+
{\s H}_1\big((\ol{\s K}_1\psi)(x,\omega,\cdot,E)\big)(E)
-
2\pi\ 
{\p {\hat\sigma_2}{E'}}(x,E,E)\psi(x,\omega,E)\nonumber\\
&
+\omega\cdot\nabla_x\psi+\Sigma(x,\omega,E)\psi
-(K_r\psi)(x,\omega,E)
\eea
where for $j=1,2$ the (first-order) Hadamard finite part integral operator is defined by
\[
{\s H}_1\big((\ol{\s K}_j\psi)(x,\omega,\cdot,E)\big)(E)
:=
{\rm p.f.}\int_E^{E_m}{1\over{E'-E}}
\hat\sigma_j(x,E',E)\int_{0}^{2\pi}\psi(x,\gamma(E',E,\omega)(s),E')ds dE'.
\]
The second order partial differential operator 
$\sum_{|\alpha|\leq 2}a_{\alpha}(E,\omega)\partial_{\omega}^\alpha\psi$ can be given explicitly for M\o ller scattering and its coordinate free form is $c(E)\Delta_S$ where $\Delta_S$ is the \emph{Laplace-Beltrami operator} on $S$ (see Remark \ref{rem-aij}).
The operator $K_r$ above is the so called  {\it restricted collision operator}. Roughly speaking it is the residual when the singular part is separated from the collision operator.
In \cite{tervo18} we showed  that $K_r$ is a \emph{bounded operator} in $L^2(G\times S\times I)$. The result
is based on the  Schur criterion. In addition, we showed that $\Sigma-K_r$ is \emph{coercive (accretive) operator} in $L^2(G\times S\times I)$ under relevant assumptions.
The operator (\ref{intro-exact}) reminds of the CSDA-Fokker-Plank operator but it contains additionally the Hadamard finite part integral operators which can be  considered as pseudo-differential-like  operators. The expression (\ref{intro-exact}) of $T$ gives a solid background for the use of CSDA-Fokker-Plank operator as an approximation.

The relevant transport problem requires besides of the operator equation $T\psi=f$ the due initial and inflow boundary conditions and therefore the total transport problem is
\be\label{tp-intro}
T\psi=f,\ 
\psi_{|\Gamma_-}=g,\
\psi(.,.,E_m)=0
\ee
where $E_m$ is the so called \emph{cut-off energy}.
In  section \ref{var-for} we expose a weak form of the hyper-singular transport problem. 
We use the strongly hyper-singular operator as a starting point of derivations although the expression (\ref{intro-exact}) could be applied equally well (in fact somewhat more simply).
The obtained weak form is the so called \emph{variational equation} 
\[
B(\psi,v)=Fv,\ v\in \mc D_{(0)}.
\]
for the transport problem. 
Here $\mc D_{(0)}$ is a relevant space of test functions and $B(.,.)$ and $F$ are the due bilinear  and linear forms, respectively. The variational formulation given in section \ref{lowering}
decreases the level of singularities in the integration and it contains only singularities of order one that is, singularities like ${1\over{E'-E}}dE' dE$.
The variational formulation is an essential step in order to define generalized solutions and to show the existence of solutions e.g. by Lions-Lax-Milgram Theorem based methods. It also gives a platform needed for Galerkin finite element methods. 
The consideration of singular integral operators by element methods is well known in the field of boundary element methods (BEM), e.g. \cite{costabel}. So in principle, no approximations (such as CSDA or Fokker-Plank) are needed for numerical solutions. 
In section \ref{ad-sec} we compute  related formal adjoint operators which are beneficial to existence theory as well.
In this paper we, however, omit  proceedings for solution spaces and existence theory instead of some notes in section \ref{bilin-bound}. In the final section \ref{tr-app} the corresponding approximative transport operator is deduced which turns out to be a CSDA-Fokker-Planck operator in nature.

\section{Preliminaries}\label{pre}

\subsection{Basic notations}\label{b-f-spaces}

We assume that $G$ is an open bounded 
 set in $\R^3$ such that $\ol{G}$ is a $C^1$-manifold with boundary (as a submanifold of $\R^3$; cf. \cite{lee}).
In particular, it follows from this definition that $G$ lies on one side of its boundary.
The unit outward (with respect to $G$) pointing normal on $\partial G$ is denoted by $\nu$ 
and the surface measure (induced by the Lebesgue measure $dx$) on $\partial G$ is denoted as $d\sigma$.
We let $S=S_2$ be the unit sphere in $\R^3$ equipped with the standard surface measure $d\omega$. 
Furthermore, let $I=[E_0,E_{\rm m}]$  where
$0\leq E_0<E_{\rm m}<\infty$. 
 We could replace $I$ by $I=[E_0,\infty[$ but we neglect this case here.
We shall denote by $I^\circ$ the interior  of $I$.
The interval $I$ is equipped with the Lebesgue measure $dE$.
All functions considered in this paper are real-valued, and all linear (Hilbert, Banach) {\it spaces are real}.

For $(x,\omega)\in G\times S$ the \emph{escape time (in the direction $\omega$)} $t(x,\omega)=t_-(x,\omega)$ is defined by 
$ t(x,\omega):=\inf\{s>0\ |\ x-s\omega\not\in G\}  =\sup\{T>0\ |\ x-s\omega\in G\ {\rm for\ all}\ 0<s<T\}$.
We define
\[
\Gamma':=(\partial G)\times S,\quad
\Gamma:=\Gamma'\times I,
\]
and their subsets
\begin{alignat*}{3}
\Gamma_0' :={}&\{(y,\omega)\in \Gamma'\ |\ \omega\cdot\nu(y)=0\},
\quad &&\Gamma_0:=\Gamma'_0\times I, \\
\Gamma_{-}':={}&\{(y,\omega)\in \Gamma'\ |\ \omega\cdot\nu(y)<0\}, && \Gamma_{-}:=\Gamma_{-}'\times I,\\
\Gamma_{+}':={}&\{(y,\omega)\in \Gamma'\ |\ \omega\cdot\nu(y)>0\}, &&\Gamma_{+}:=\Gamma_{+}'\times I.
\end{alignat*}
Note that $\Gamma=\Gamma_0\cup \Gamma_-\cup \Gamma_+$.

In the sequel we denote for $k\in\N_0$, 
\[
C^k(\ol G\times S\times I):=\{\psi\in C^k(G\times S\times I^\circ)\ |\ \psi=f_{|G\times S\times I^\circ},\ f\in C_0^k(\R^3\times S\times\R)\},
\]
where for a $C^k$-manifold $M$ without boundary, the set $C_0^k(M)$ denotes
 the set of all $C^k$-functions on $M$ with compact support.
Define the (Sobolev) space $W^2(G\times S\times I)$  by
\[
W^2(G\times S\times I)
:=\{\psi\in L^2(G\times S\times I)\ |\  \omega\cdot\nabla_x \psi\in L^2(G\times S\times I) \}.
\]
The  space $W^2(G\times S\times I)$ is a Hilbert space when equipped  with the inner product
\[
\la {\psi},v\ra_{W^2(G\times S\times I)}:=\la {\psi},v\ra_{L^2(G\times S\times I)}+
\la\omega\cdot\nabla_x\psi,\omega\cdot\nabla_x v\ra_{L^2(G\times S\times I)}.
\]
The space $C^1(\ol G\times S\times I)$ is a dense subspace of  $W^2(G\times S\times I)$ (e.g. \cite{friedrich44}). 

Let $T^2(\Gamma)$ be the weighted Lebesgue space $L^2(\Gamma,|\omega\cdot\nu|d\sigma d\omega dE)$.
The trace $\gamma(\psi):=\psi_{|\Gamma}$ (for a detailed study of inflow trace theory see e.g. \cite{cessenat84} and \cite{tervo18-up}, section 2.2)
is well-defined in the  space
\[
\widetilde{W}^2(G\times S\times I):=\{\psi\in W^2(G\times S\times I)\ |\ \gamma(\psi)\in T^2(\Gamma) \}
\]
which is a Hilbert space when equipped with the inner product
\[
\la {\psi},v\ra_{\widetilde{W}^2(G\times S\times I)}:=\la {\psi},v\ra_{W^2(G\times S\times I)}+ \la {\gamma(\psi)},\gamma(v)\ra_{T^2(\Gamma)},
\]
where
\[
\la h_1,h_2\ra_{T^2(\Gamma)}:=\int_\Gamma h_1(x,\omega,E)h_2(x,\omega,E)|\omega\cdot\nu| d\sigma d\omega dE.
\]
We recall that the  Green's formula (\cite{dautraylionsv6}, p. 225)
\be\label{green}
\int_{G\times S\times I}(\omega\cdot \nabla_x \psi)v\, dxd\omega dE
+\int_{G\times S\times I}(\omega\cdot \nabla_x v)\psi\, dxd\omega dE=
\int_{\partial G\times S\times I}(\omega\cdot \nu) v\, \psi\, d\sigma d\omega dE,
\ee
is valid  for every $\psi,\ v\in \widetilde{W}^2( G\times S\times I)$.

\subsection{Some  tools from analysis}\label{taylor-S}

We recall the following standard concepts from analysis which we  shall frequently 
need.
The Taylor's expansion (of order $r\in\N_0$) for sufficiently smooth functions
$f:U\to\R$ on an open set $U\subset \R^N$ is given by
\be\label{taylor}
f(x)=\sum_{|\alpha|\leq r}{1\over{\alpha !}}{{\partial^\alpha f}\over{\partial x^\alpha}}(x_0)(x-x_0)^\alpha + \sum_{|\alpha|=r+1}
R_\alpha(x)(x-x_0)^\alpha
\ee
where the residual term (one of its variant forms) is
\[
R_\alpha(x):={{|\alpha|}\over{\alpha !}}\int_0^1(1-t)^{|\alpha|-1}
{{\partial^\alpha f}\over{\partial x^\alpha}}(x_0+t(x-x_0))dt.
\]
Recall also 
the definitions of Hadamard finite part integrals for discontinuous functions $f:[a,b]\to\R$ by \cite{martin-rizzo}, pp. 5 and 32, formulas (14) and (32) therein or \cite[p. 104]{schwarz}.
Applying these definitions (for a fixed $x$) to the function $F_x(t):=\chi_{[x,b]}(t)f(t)$, where $f\in C([a,b])$ and
where $\chi_{[x,b]}(t)$ is the characteristic function of the interval $[x,b]$, we have
\be \label{def-h1}
{\rm p.f.}\int_a^b{{F_x(t)}\over{t-x}}dt
={\rm p.f.}\int_x^{b}{{f(t)}\over{t-x}}dt
=\lim_{\epsilon\to 0}\Big(\int_{x+\epsilon}^{b}{{f(t)}\over{t-x}}dt
+f(x^+)\ln(\epsilon)\Big)
\ee
and
\be \label{def-h2}
{\rm p.f.}\int_a^b{{F_x(t)}\over{(t-x)^2}}dt
=&{\rm p.f.}\int_x^{b}{{f(t)}\over{(t-x)^2}}dt \nonumber \\
=&\lim_{\epsilon\to 0}\Big(\int_{x+\epsilon}^{b}{{f(t)}\over{(t-x)^2}}dt
+f'(x^+)\ln(\epsilon)-{1\over\epsilon}f(x^+)\Big).
\ee
Analogously we define
\be \label{def-h1-a}
{\rm p.f.}\int_a^{x}{{f(t)}\over{t-x}}dt
=\lim_{\epsilon\to 0}\Big(\int_a^{x-\epsilon}{{f(t)}\over{t-x}}dt
-f(x^-)\ln(\epsilon)\Big)
\ee
or equivalently
\[
{\rm p.f.}\int_a^{x}{{f(t)}\over{x-t}}dt
=\lim_{\epsilon\to 0}\Big(\int_a^{x-\epsilon}{{f(t)}\over{x-t}}dt
+f(x^-)\ln(\epsilon)\Big)
\] 
and
\be \label{def-h2-a}
{\rm p.f.}\int_a^{x}{{f(t)}\over{(t-x)^2}}dt 
=\lim_{\epsilon\to 0}\Big(\int_a^{x-\epsilon}{{f(t)}\over{(t-x)^2}}dt
-f'(x^-)\ln(\epsilon)-{1\over\epsilon}f(x^-)\Big)
\ee

In particular,
these formulas give
\be\label{hada1}
{\rm p.f.}\int_x^{b}{1\over{t-x}}dt
=\ln(b-x),
\ee
\be\label{hada2}
{\rm p.f.}\int_x^{b}{1\over{(t-x)^2}}dt
=-{1\over{b-x}}.
\ee
Note that ${\rm p.f.}\int_x^{b}{{f(t)}\over{t-x}}dt$ is well-defined (at least) for all $f\in C^\alpha([a,b]),\ \alpha>0$ and (cf. \cite{chan})
\be\label{c-0-a}
{\rm p.f.}\int_x^{b}{{f(t)}\over{t-x}}dt
=\int_{x}^{b}{{f(t)-f(x)}\over{t-x}}dt+
f(x)\ln({b-x}) .
\ee
We recall from \cite{tervo18-up}, Lemma 3.2

\begin{lemma}\label{hadale}
Suppose that $f\in C^2([a,b]\times [a,b])$.  Then for $x\in [a,b]$
\be\label{ch-id}
{d\over{dx}}\Big({\rm p.f.}\int_x^{b}{{f(x,t)}\over{t-x}}dt\Big)
={\rm p.f.}\int_x^{b}{{f(x,t)}\over{(t-x)^2}}dt+ 
{\rm p.f.}\int_x^{b}{{{\p f{x}}(x,t)}\over{t-x}}dt
-{\p f{t}}(x,x)
\ee
and
\be\label{ch-id-a}
{d\over{dx}}\Big({\rm p.f.}\int_a^{x}{{f(x,t)}\over{x-t}}dt\Big)
=-{\rm p.f.}\int_a^{x}{{f(x,t)}\over{(t-x)^2}}dt+ 
{\rm p.f.}\int_a^{x}{{{\p f{x}}(x,t)}\over{x-t}}dt
-{\p f{t}}(x,x).
\ee 
\end{lemma}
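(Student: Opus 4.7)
My plan is to prove (\ref{ch-id}) directly from the $\epsilon$-limit definitions (\ref{def-h1}) and (\ref{def-h2}), and then to derive (\ref{ch-id-a}) by the analogous argument at the lower endpoint. Fix a small $\epsilon>0$ and set
\[
F_\epsilon(x) := \int_{x+\epsilon}^b \frac{f(x,t)}{t-x}\,dt + f(x,x)\ln(\epsilon),
\]
so that ${\rm p.f.}\int_x^b \frac{f(x,t)}{t-x}\,dt = \lim_{\epsilon\to 0}F_\epsilon(x)$. Since $f\in C^2$, ordinary Leibniz differentiation applies to $F_\epsilon$, and the identity $\frac{\partial}{\partial x}\big(\frac{f(x,t)}{t-x}\big) = \frac{1}{t-x}\frac{\partial f}{\partial x}(x,t)+\frac{f(x,t)}{(t-x)^2}$ together with the chain rule for $f(x,x)$ gives
\[
F_\epsilon'(x) = -\frac{f(x,x+\epsilon)}{\epsilon} + \int_{x+\epsilon}^b \frac{\frac{\partial f}{\partial x}(x,t)}{t-x}\,dt + \int_{x+\epsilon}^b \frac{f(x,t)}{(t-x)^2}\,dt + \Big(\frac{\partial f}{\partial x}(x,x)+\frac{\partial f}{\partial t}(x,x)\Big)\ln(\epsilon).
\]

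The central step is a purely algebraic regrouping that exposes the pre-limit structure of the target finite-part integrals. Splitting $f(x,x+\epsilon)/\epsilon = f(x,x)/\epsilon + [f(x,x+\epsilon)-f(x,x)]/\epsilon$ and likewise splitting the coefficient of $\ln(\epsilon)$ into its two summands, $F_\epsilon'(x)$ separates into precisely the pre-limit patterns of (\ref{def-h1}) applied to $\frac{\partial f}{\partial x}(x,\cdot)$ and of (\ref{def-h2}) applied to $f(x,\cdot)$, together with the residual $-[f(x,x+\epsilon)-f(x,x)]/\epsilon$. As $\epsilon\to 0$ this residual tends to $-\frac{\partial f}{\partial t}(x,x)$ by the mean value theorem, while the regularized groups tend to the corresponding finite-part integrals; the sum is the right-hand side of (\ref{ch-id}).

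The one substantive point is the interchange of differentiation with the $\epsilon\to 0$ limit, i.e.\ the justification that $F'(x)=\lim_{\epsilon\to 0}F_\epsilon'(x)$. I would handle this by showing locally uniform convergence of the three regularized pieces on compact sub-intervals, using Taylor-type bounds on $f(x,t)-f(x,x)-(t-x)\frac{\partial f}{\partial t}(x,x)$ that are uniform in $x$ by virtue of $f\in C^2$. A cleaner alternative is to bypass the $\epsilon$-limit entirely by starting from the regularized representation (\ref{c-0-a}), which rewrites $F(x)$ as an ordinary convergent integral plus $f(x,x)\ln(b-x)$; the standard Leibniz rule then applies directly, and the same algebraic rearrangement delivers (\ref{ch-id}).

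Finally, (\ref{ch-id-a}) follows by repeating the derivation with $G_\epsilon(x):=\int_a^{x-\epsilon}\frac{f(x,t)}{x-t}\,dt+f(x,x)\ln(\epsilon)$. The sign flip of the $(t-x)^{-2}$ term in (\ref{ch-id-a}) relative to (\ref{ch-id}) arises naturally from three sources: the Leibniz boundary contribution at $t=x-\epsilon$ now enters with a $+$ sign, $\frac{\partial}{\partial x}\frac{1}{x-t} = -\frac{1}{(x-t)^2}$ carries an extra minus, and the $1/\epsilon$-subtraction in (\ref{def-h2-a}) is opposite in sign to its counterpart in (\ref{def-h2}). Careful sign and boundary bookkeeping is the principal place where mistakes are easy; conceptually the two identities are on the same footing, and the $C^2$-hypothesis suffices throughout.
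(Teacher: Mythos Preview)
Your approach is correct and is the standard way to derive these identities: differentiate the regularized $F_\epsilon$, regroup the terms to match the pre-limit definitions (\ref{def-h1}) and (\ref{def-h2}), and identify the leftover difference quotient as $-\partial_t f(x,x)$. The paper does not actually prove Lemma~\ref{hadale}; it is merely recalled from \cite{tervo18-up}, so there is no in-paper argument to compare against. Your treatment of the interchange of $d/dx$ with the $\epsilon\to 0$ limit via locally uniform convergence of $F_\epsilon'$ (or alternatively via the regularized representation (\ref{c-0-a})) is the right justification, and your discussion of the sign bookkeeping for (\ref{ch-id-a}) is accurate.
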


We remark that
the derivation result of the previous lemma \ref{hadale} is valid  for more general $f$ and that under appropriate assumptions $Hf\in W^{1,p}(]a,b[)$ where
\[
(Hf)(x):={\rm p.f.}\int_x^b{{f(x,t)}\over{t-x}}dt.
\]
For example, we have

\begin{lemma}\label{h1-lemma}
Suppose that $f\in C^{1+\alpha}([a,b]\times  [a,b]),\ \alpha>0$ and let 
\[
F_1(x):={{f(x,x)}\over{b-x}},\ F_2(x):=\ln(b-x){\p f{t}}(x,x),\
F_3(x):=\ln(b-x){\p f{x}}(x,x)
 .
 \]
Furthermore, suppose that
\be\label{h1-le-2}
F_j\in L^1(]a,b[),\ j=1,2,3.
\ee
Then  $Hf\in W^{1,1}(]a,b[)$ and 
\be\label{gen-hadale}
(Hf)'(x)=
{\rm p.f.}\int_x^{b}{{f(x,t)}\over{(t-x)^2}}dt+ 
{\rm p.f.}\int_x^{b}{{{\p f{x}}(x,t)}\over{t-x}}dt
-{\p f{t}}(x,x)
.
\ee
\end{lemma}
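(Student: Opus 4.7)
The plan is to use identity (\ref{c-0-a}) to decompose
\[
(Hf)(x) = G(x) + u(x), \quad G(x) := \int_x^b \frac{f(x,t) - f(x,x)}{t-x}\, dt, \quad u(x) := f(x,x)\ln(b-x),
\]
and handle the two pieces separately. Writing
\[
g(x,t) := \frac{f(x,t)-f(x,x)}{t-x} = \int_0^1 \frac{\partial f}{\partial t}(x,\, x+s(t-x))\, ds,
\]
one sees that $g$ extends continuously to the diagonal with $g(x,x) = (\partial f/\partial t)(x,x)$, so $G \in C([a,b])$. For $u$, the chain rule shows $\phi(x):=f(x,x)$ is $C^1$ on $[a,b]$ with $\phi'(x)= (\partial f/\partial x)(x,x)+(\partial f/\partial t)(x,x)$, so $u$ is classically $C^1$ on $]a,b[$ with $u'(x) = F_3(x)+F_2(x)-F_1(x)$. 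Hypothesis (\ref{h1-le-2}) then gives $u' \in L^1(]a,b[)$, while $u \in L^1(]a,b[)$ follows from boundedness of $\phi$ together with $\ln(b-x) \in L^1(]a,b[)$; hence $u \in W^{1,1}(]a,b[)$.

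For $G$ I would apply the Leibniz rule
\[
G'(x) = -g(x,x) + \int_x^b \frac{\partial g}{\partial x}(x,t)\, dt.
\]
A formal differentiation yields
\[
\frac{\partial g}{\partial x}(x,t) = \frac{(\partial f/\partial x)(x,t) - \phi'(x)}{t-x} + \frac{f(x,t) - f(x,x)}{(t-x)^2},
\]
where each summand is individually non-integrable near $t=x$. The crucial observation is that the $C^{1+\alpha}$ Taylor expansion $f(x,t) = f(x,x) + (\partial f/\partial t)(x,x)(t-x) + r(x,t)$ with $r(x,t) = O(|t-x|^{1+\alpha})$ makes the two $(\partial f/\partial t)(x,x)/(t-x)$ contributions cancel, leaving
\[
\frac{\partial g}{\partial x}(x,t) = \frac{(\partial f/\partial x)(x,t) - (\partial f/\partial x)(x,x)}{t-x} + \frac{r(x,t)}{(t-x)^2} = O(|t-x|^{\alpha-1}),
\]
integrable in $t$ on $[x,b]$ with H\"older-type bounds uniform on compact subintervals of $]a,b[$. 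This control allows the Leibniz formula to be justified either directly (by Fubini applied to $\int_x^{x+h}\!\int_{x+h}^b \partial_x g(s,t)\, dt\, ds$ in the difference quotient $[G(x+h)-G(x)]/h$, combined with continuity of $g$ at the diagonal for the boundary piece) or by mollifying $f$ to $C^2$, invoking Lemma \ref{hadale}, and passing to the limit in the resulting identity.

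Finally I would recast the ordinary integrals as Hadamard finite-part integrals via (\ref{c-0-a}) and its second-order companion
\[
\int_x^b \frac{r(x,t)}{(t-x)^2}\, dt = {\rm p.f.}\int_x^b \frac{f(x,t)}{(t-x)^2}\, dt + \frac{f(x,x)}{b-x} - \frac{\partial f}{\partial t}(x,x)\ln(b-x),
\]
(obtained from (\ref{hada1}), (\ref{hada2}) applied to the Taylor split of $f$), and sum $G'(x) + u'(x)$. The $F_1, F_2, F_3$ contributions cancel in pairs, producing precisely the right-hand side of (\ref{gen-hadale}); $L^1(]a,b[)$-integrability of $(Hf)'$ is then assured by (\ref{h1-le-2}) together with the $|t-x|^{\alpha-1}$ bounds above. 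The main obstacle is the rigorous justification of the Leibniz rule for $G$: the naive $\partial g/\partial x$ fails to be integrable in $t$, and the entire argument hinges on the algebraic cancellation exposed above together with the H\"older control furnished by the $C^{1+\alpha}$ regularity of $f$.
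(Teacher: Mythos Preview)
Your approach is correct and genuinely different from the paper's. The paper proceeds distributionally: it first checks (via Taylor's formula) that the right-hand side $h(x)$ of (\ref{gen-hadale}) lies in $L^1(]a,b[)$, and then verifies the weak-derivative identity $\int_a^b (Hf)(x)\varphi'(x)\,dx=-\int_a^b h(x)\varphi(x)\,dx$ for $\varphi\in C_0^\infty(]a,b[)$ by applying the Fubini-type and partial-integration lemmas for Hadamard integrals developed later (Lemmas \ref{ad-le-1} and \ref{ad-le-3-a}). No pointwise differentiation of $Hf$ is attempted.

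Your route is classical rather than distributional: you split $Hf=G+u$ via (\ref{c-0-a}), differentiate $u$ directly and $G$ via the Leibniz rule, and then reassemble using (\ref{hada1})--(\ref{hada2}). This actually proves more---it shows $Hf$ is pointwise differentiable on $]a,b[$, not merely weakly---and it makes transparent the role of the $C^{1+\alpha}$ hypothesis: the cancellation of the $\tfrac{\partial f}{\partial t}(x,x)/(t-x)$ terms in $\partial_x g$ is exactly where the extra H\"older regularity is spent. The paper's approach, by contrast, sidesteps the Leibniz-rule justification entirely (your acknowledged ``main obstacle''), at the price of invoking forward-referenced machinery. Both lead to the same formula; your argument is more self-contained, the paper's more streamlined once those lemmas are in hand.
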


\begin{proof}
By the assumptions one sees (by the Taylor's formula) that the function
\[
h(x):= 
{\rm p.f.}\int_x^{b}{{f(x,t)}\over{(t-x)^2}}dt+ 
{\rm p.f.}\int_x^{b}{{{\p f{x}}(x,t)}\over{t-x}}dt
-{\p f{t}}(x,x)
\]
is in $L^1(]a,b[)$. Applying the techniques used in Lemmas \ref{ad-le-1} and \ref{ad-le-3-a} below one can verify that for $\varphi\in C_0^\infty(]a,b[)$
\[
\int_a^b (Hf)(x)\varphi'(x) dx=
-\int_a^b h(x)\varphi(x) dx.
\]
We omit the details of the proof.
\end{proof}

The assumption $f\in C^{1+\alpha}([a,b]\times  [a,b]),\ \alpha>0$ in the above lemma can be replaced with the weaker condition:

$f\in C^{1}([a,b]\times  [a,b]),\ \alpha>0$ such that 
\[
\Big|{\p f{t}}(x,t')-{\p f{t}}(x,t)\Big|\leq C|t'-t|^\alpha,\ t'\geq t.
\]

We finally mention that generally for $f\in L^2(]a,b[)$ the Hadamard finite part integrals  
\[
(H_jf)(x):={\rm p.f.}\int_x^b{{f(t)}\over{(t-x)^j}}dt,\ j=1,2
\]
are interpreted as distributions defined by 
\be\label{hada-dist}
(H_jf)(\varphi):=\int_a^b f(t)\Big({\rm p.f.}\int_a^t{{\varphi(x)}\over{(t-x)^j}}dx\Big)dt\ {\rm for}\ \varphi\in C_0^\infty(]a,b[).
\ee
Using the Taylor's expansion
\[
\varphi(x)=\varphi(t)+\varphi'(t)(t-x)+\int_0^1(1-s)\varphi''(t+s(t-x))ds\cdot (t-x)^2
\]
one sees that $H_2f$ defined by (\ref{hada-dist}) is really a distribution (and similarly  $H_1f$). Note that this generalization does not work for more general $f=f(x,t)$.

We need additionally the Taylor's expansion  for a sufficiently smooth function $f:S\to\R$.
Since $S$ is a manifold the expansion requires some explanation. The detailed presentation of the subject is outside of this paper and so we give only some essential technicalities. For some additional formulations see e.g. \cite{mukherjee}, pp. 185-186.

The first order Taylor's expansion of a function $f:S\to\R$ which is $C^3$-function around $\omega\in S$ is of the form
\be\label{tay-ex}
f(\omega')=f(\omega)+\la(\nabla_S f)(\omega),\zeta\ra +
(R_\omega f)(\omega')(\zeta,\zeta),\ \zeta\in T_\omega(S)
\ee
where $\nabla_S$ is the gradient on $S$ and  $R_\omega f$  is the residual.
%is a contra-variant 3-tensor on $S$. 
The inner product $\la .,. \ra$ is the Riemannian inner product on $T_\omega(S)$ induced by the euclidean inner product on $\R^3$. In addition, there exists a constant $C\geq 0$ such that 
\be\label{tay-app}
\n{\zeta}\leq C\n{\omega'-\omega}.
\ee
Leaving the residue $R_\omega f$ away we get Taylor's approximations for $f(\omega')$ near $\omega$.

The basic principle in deriving (\ref{tay-ex}) is to apply an appropriate pull-back $H_\omega:V\to U_\omega$
where $U_\omega\subset S$ and $V\subset T_\omega(S)$ are open neighbourhoods such that $\omega\in U_\omega,\ 0\in V$ and $H_\omega(0)=\omega$. One assumes that $H_\omega$
is a sufficiently smooth diffeomorphism and so the (smooth) inverse mapping $H_\omega^{-1}:U_\omega\to V$ exists. Let $\omega'\in U_\omega$ and let
\[
H_\omega^{-1}(\omega')=\zeta=\xi_1\ol\Omega_1'+\xi_2\ol\Omega_2'
\]
where
$\ol\Omega_1'=\ol\Omega_1(\omega'),\ \ol\Omega_2'=\ol\Omega_2(\omega')$. Here $\ol\Omega_1,\ \ol\Omega_2$ are the (locally defined) canonical tangent vectors of $S$ at $\omega\in S$ that is,
\[
\ol\Omega_1={1\over{\sqrt{\omega_1^2+\omega_2^2}}}(-\omega_1,\omega_2,0),
\]
\[
\ol\Omega_2=\big({{\omega_1\omega_3}\over{\sqrt{\omega_1^2+\omega_2^2}}},
{{\omega_2\omega_3}\over{\sqrt{\omega_1^2+\omega_2^2}}}.
-\sqrt{1-\omega_3^2}\big),\ \sqrt{\omega_1^2+\omega_2^2}\not=0.
\]
One often chooses $H_\omega$ to be the (differential geometry's) exponential mapping $H_\omega=\exp_\omega$ (for exponential mapping see \cite{carmo} and the Example \ref{ex-map} below). The pull-back obeys (at least locally)
\be\label{e4}
\n{H_\omega^{-1}(\omega')}=\n{\zeta}\leq C\n{\omega'-\omega}.
\ee

The tangent space $T_\omega(S)$ can be isomorphically (and isometrically) identified with $\R^2$ by
\[
\zeta=\xi_1\ol\Omega_1+\xi_2\ol\Omega_2\sim (\xi_1,\xi_2)=:\xi.
\]
In fact we can define
\[
J(\zeta)=J(\xi_1\ol\Omega_1+\xi_2\ol\Omega_2)= (\xi_1,\xi_2).
\]
Let $J(V)=V'\subset\R^2$.
Using this identification we find that the mapping $f\circ H_\omega:V'\to \R$ is well-defined (and as smooth as $f$). Note that actually
\[
(f\circ H_\omega)(\xi)=(f\circ H_\omega)(\xi_1\ol\Omega_1+\xi_2\ol\Omega_2).
\]
Hence we are able to write the Taylor's expansion (the expansion (\ref{taylor}) with $r=1$) near $0\in\R^2$ 
\bea\label{t-ex}
&
(f\circ H_\omega)(\zeta)=(f\circ H_\omega)(\xi)=(f\circ H_\omega)(0)
+\sum_{j=1}^2\partial_j(f\circ H_\omega)(0)\xi_j
\nonumber\\
&
+\sum_{|\alpha|=2}{{|\alpha|}\over{\alpha !}}\int_0^1(1-t)\partial_\xi^\alpha
(f\circ H_\omega)(t\xi)dt \cdot \xi^\alpha. 
\eea
By (\ref{e4}) for $\zeta=H_\omega^{-1}(\omega')$
\be\label{e7}
\n{\xi}=\n{\zeta}\leq C\n{\omega'-\omega}.
\ee
Finally,  one has
\be\label{nabla-a}
\partial_j(f\circ H_\omega)(0)=(\partial_{\omega_j}f)(\omega)
\ee 
and
\be\label{e8}
\sum_{j=1}^2\partial_j(f\circ H_\omega)(0)\xi_j
=\sum_{|\alpha|=1}{1\over{\alpha !}}\partial_\xi^\alpha(f\circ H_\omega)(0)\xi^\alpha=\la(\nabla_S f)(\omega),\zeta\ra .
\ee
and so (\ref{tay-ex}) can be seen.

The gradient $\nabla_S f$ on sphere $S$ can be shortly  depicted as follows (for $n=3$). 
Suppose that $f$ is defined and smooth in a neighbourhood of $S\subset\R^3$. Then
\be\label{gradS}
\nabla_S f=\la\nabla f,\ol\Omega_1\ra\ol\Omega_1+
\la\nabla f,\ol\Omega_2\ra\ol\Omega_2
\ee
where $\nabla f$ is the gradient of $f$ in the ambient space $\R^3$. Note that the right hand side of (\ref{gradS}) is the projection of $\nabla f$ onto the tangent space.
Hence
\be\label{nab0} 
\omega\cdot\nabla_S f=0.
\ee

The first order Taylor's approximation of $\psi$ with respect to $\omega$ around $\omega$ is
\be\label{ess-app-1}
\psi(x,\omega',E)\approx \psi(x,\omega,E)
+\la(\nabla_{S}\psi)(x,\omega,E),\zeta\ra 
\ee
where $\zeta=H_\omega^{-1}(\omega')\in T_\omega(S)$ and it satisfies by (\ref{e7})
\be\label{tay-app-a}
\n{\zeta}\leq C\n{\omega'-\omega}.
\ee
The residual, for example, for the approximation (\ref{ess-app-1}) is by (\ref{t-ex})
\be\label{t-residual}
R_\omega(\psi(x,.,E))(\omega')(\zeta,\zeta)
=
\sum_{|\alpha|=2}{{|\alpha|}\over{\alpha !}}\int_0^1(1-t)\partial_\xi^\alpha
(\psi(x,.,E)\circ H_\omega)(t\xi)dt \cdot \xi^\alpha. 
\ee
where $\xi=J(\zeta)$.

\begin{remark}\label{inner-pr}

Let $\la .,.\ra_r$ be the Riemannian inner product on $T(S)$ that is,
\[
\la \zeta_1,\zeta_2\ra_r =\xi_1^1\xi_1^2+\xi_2^1\xi_2^2
\]
for $\zeta_j=\xi_j^1\ol\Omega_1(\omega)+\xi_j^2\ol\Omega_2(\omega)\in T_{\omega}(S)$.
Furthermore, let $a_{{\rm pr},\omega},\ \omega\in S$ be the projection of a vector $a\in\R^3$ onto $T_{\omega}(S)$ that is,
\[
a_{{\rm pr},\omega}:=
\sum_{j=1}^2\la a,\ol\Omega_j(\omega)\ra\ol\Omega_j(\omega).
\]
Then that for $\zeta\in T_{\omega}(S)$ and $a\in\R^3$
\be\label{same}
\la \zeta,a_{{\rm pr},\omega}\ra_r=\la \zeta,a\ra_{\R^3}=\la \zeta,a\ra.
\ee
In the sequel the inner product $\la \zeta,a\ra$ is interpreted by (\ref{same}).
\end{remark}

\begin{example}\label{ex-map}

Recall that the geodesics of the sphere $S=S_2$ are great circles. 
Let $\omega\in S$ and let $\zeta=\xi_1\ol\Omega_1+\xi_2\ol\Omega_1\in T_\omega(S)$ be the tangent vector of $S$ at $\omega$. Then there exists a geodesic $\gamma_\omega:]-2,2[\to S$ which satisfies the initial conditions 
\[
\gamma_\omega(0)=\omega,\ \gamma_\omega'(0)=\zeta.
\]
In fact in the case of $S$ 
\be\label{geo-s}
\gamma_\omega(t)=\cos(\n{\zeta}t)\omega+\sin(\n{\zeta}t){\zeta\over{\n{\zeta}}}.
\ee

The differential geometry's exponential mapping $\exp_\omega:T_\omega(S)\to S$ at $\omega$ is defined by
\[
\exp_\omega(\zeta):=\gamma_\omega(1).
\]
Note that the exponential map is dependent on the parametrization $\gamma_\omega$ of the geodesic. Its basic properties are, however independent 
of $\gamma_\omega$. 
It can be shown that there exist open neighbourhoods $U_\omega\subset S$ and $V\subset T_\omega(S)$ such that $\omega\in U_\omega,\ 0\in V$  and for which
\[
\exp_\omega:V\to U_\omega
\]
is a diffeomorphism. Hence the (smooth) inverse mapping $\exp_\omega^{-1}:U_\omega\to V$ exists. Let $\omega'\in U_\omega$ and let
\[
\exp_\omega^{-1}(\omega')=\zeta=\xi_1\ol\Omega_1+\xi_2\ol\Omega_2.
\]
From (\ref{geo-s}) we immediately get that for the sphere the exponential map using the above parametrization is
\be\label{exp-s}
\exp_\omega(\zeta)=
\gamma_\omega(1)=\cos(\n{\zeta})\omega+\sin(\n{\zeta}){\zeta\over{\n{\zeta}}}.
\ee
To see that $\exp_\omega(\zeta)=\exp_\omega(\xi_1\ol\Omega_1+\xi_2\ol\Omega_2)$ has the needed differentiability properties, apply Taylor's expansions
\[
\cos(\n{\zeta})=1+{1\over{2!}}\n{\zeta}^2+\cdots\
 {\rm and}\ \sin(\n{\zeta)}=\n{\zeta}-{1\over{3!}}\n{\zeta}^3+\cdots\ .
\]

The inverse $\exp_\omega^{-1}$ can be computed explicitly. In fact, let 
$\exp_\omega(\zeta)=\omega'$ that is,
\be\label{e0}
\cos(\n{\zeta})\omega+\sin(\n{\zeta}){\zeta\over{\n{\zeta}}}=\omega'.
\ee
Since $\omega\perp \zeta$ we find that
\be\label{e1}
\cos(\n{\zeta})=\la\omega',\omega\ra ,\ \sin(\n{\zeta})=\sqrt{1-\la\omega',\omega\ra^2}
\ee
and then
\be\label{e2}
\n{\zeta}={\rm \ol{arc}cos}(\la\omega',\omega\ra ).
\ee
From (\ref{e0}), (\ref{e1}) and (\ref{e2}) we obtain
\be\label{e3}
\exp_\omega^{-1}(\omega')=\zeta={{{\rm \ol{arc}cos}(\la\omega',\omega\ra}\over{
\sqrt{1-\la\omega',\omega\ra^2}}}(\omega'-\la\omega',\omega\ra\omega).
\ee

We find that
there exists a constant $C>0$ such that
\be\label{e4-a}
\n{\exp_\omega^{-1}(\omega')}=\n{\zeta}\leq C\n{\omega'-\omega}.
\ee
Actually, 
from the Hospital's rule it follows that
\[
\lim_{x\to 1}{{{\rm \ol{arc}cos(x)}}\over{\sqrt{1-x^2}}}=1
\]
and so (since $S\times S$ is compact) there exists a constant $C'$ such that 
\be\label{e5}
\Big|{{\rm \ol{arc}cos(\la\omega',\omega\ra}\over{
\sqrt{1-\la\omega',\omega\ra^2}}}\Big|\leq C'.
\ee
Furthermore, we have
\be\label{e6}
\omega'-\la\omega',\omega\ra\omega=\omega'-\la\omega'-\omega,\omega\ra\omega
-\omega.
\ee
Hence the assertion follows from (\ref{e3}), (\ref{e5}), (\ref{e6}).

\end{example}

%%%%%%%%%%%%%%%%%%%%%%%%%%%%%%%%%%%%%%%%%%%%%%%%%%%%%
\section{ On hyper-singular collision operators related to charged particle transport}\label{h-s-c-op}
%%%%%%%%%%%%%%%%%%%%%%%%%%%%%%%%%%%%%%%%%%%%%%%%%%%%%%

The differential cross-sections
may have singularities, or even hyper-singularities, which would lead to extra  pseudo-differential-like terms in the transport equation.
In the case where $\sigma(x,\omega',\omega,E',E)$ has hyper-singularities (like in the case of M\o ller-type  differential cross 
sections analysed below) the integral $\int_{I'}$ occurring in the collision operator must be understood in the 
sense of {\it Cauchy principal value} ${\rm p.v.}\int_{I'}$ or more generally in the sense of {\it Hadamard finite part integral} ${\rm p.f.}\int_{I'}$ (\cite[Sec. 3.2]{hsiao},  \cite[pp. 104-105]{schwarz}, \cite{estrada}, sections 1.5 and 1.6). Hyper-singular integral operators form a subclass of pseudo-differential operators (\cite{hsiao}, Chapter 7, \cite{lifanov}, Chapter 3).
Moreover, the $(\omega',\omega)$-dependence in differential cross-sections typically contain Dirac's $\delta$-distributions (on $\R$). 
More precisely, in  $\sigma(x,\omega'\omega,E',E)$ there may occur terms
like $\delta(\omega\cdot\omega'-\mu(E',E))$ or $\delta(E-E')$ which require special treatment.

Consider the following partial singular integral operator,
\be\label{coll-1}
(K\psi)(x,\omega,E)={\rm p.f.}\int_{I'}\int_{S'}\sigma(x,\omega',\omega,E',E)\psi(x,\omega',E')d\omega' dE'.
\ee
The simplest is the case  where  $\sigma$ has at most a so-called \emph{weak singularity} with respect to energy. This means that
$\sigma=\sigma_0(x,\omega',\omega,E',E)$ 
is a measurable non-negative function $G\times S\times S\times (I\times I\setminus D)\to\R$,
where $D=\{(E,E)\ |\ E\in I\}$ is the diagonal of $I\times I$,
obeying for $E\neq E'$ the estimates
\bea
&
{\rm ess\ sup}_{(x,\omega)}\int_{S'}\sigma_0(x,\omega',\omega,E',E)d\omega'\leq  {C\over{|E-E'|^\kappa}},
\label{coll-3} \\
&
{\rm ess\ sup}_{(x,\omega)}\int_{S'}\sigma_0(x,\omega,\omega',E,E')d\omega'\leq  {C\over{|E-E'|^\kappa}},
\label{coll-3a}
\eea
where $\kappa<1$.
The corresponding collision operator 
\be 
(K\psi)(x,\omega,E)
=
\int_{I'}\int_{S'}\sigma_0(x,\omega',\omega,E',E)\psi(x,\omega',E')d\omega' dE',
\ee
is a usual partial Schur  integral operator that is, $\sigma_0(x,\omega',\omega,E',E)$ satisfies the Schur criterion for the boundedness (\cite{halmos}, p. 22)
and so $K$ is a  bounded operator $L^2(G\times S\times I)\to L^2(G\times S\times I)$ (see \cite{tervo18-up}, section 5).  

Nevertheless, the  collision operator $K$ is not generally of the above form. $(E',E)$-dependence in differential cross section 
$\sigma(x,\omega',\omega,E',E)$ may contain hyper-singularities of higher order, such as
${1\over{(E'-E)}^j}$, for $j=1,2$. 
Below we shall consider in more detail the M\o ller scattering. We remark that  the analysis e.g. for Bremsstrahlung goes quite similarly (but it is more simple).

\subsection{M\o ller scattering as a prototype for hyper-singular collision operators}\label{moller}

So called M\o ller scattering is a kind of prototype of interactions of charged particles leading to hyper-singular integral operators. Hence we express our analysis in the frames of it.
In \cite{tervo18-up} we verified that the cross section $\sigma$ for the 
M\o ller interaction is of the form

\begin{multline}
\sigma(x,\omega',\omega,E',E)
=\chi(E',E)\Big(
{1\over{(E'-E)^2}}\sigma_2(x,\omega',\omega,E',E)\\
-{1\over{E'-E}}\sigma_1(x,\omega',\omega,E',E)+\sigma_0(x,\omega',\omega,E',E)\Big)
\label{coll-2}
\end{multline}
where 
\[
\chi(E',E):=\chi_{\R_+}(E-E_0)\chi_{\R_+}(E_m-E)\chi_{\R_+}(E'-E).
\]
Here each of $\sigma_j(x,\omega',\omega,E',E)$, $j=0,1,2$ may contain the above mentioned $\delta$-distributions,
and hence they are not necessarily measurable functions on $G\times S\times S\times I\times I$.
Denote for $j=0,1,2$,
\[
(\ol {\s K}_j\psi)(x,\omega,E',E):={}&\int_{S'}\sigma_j(x,\omega',\omega,E',E)\psi(x,\omega',E') d\omega', \\[2mm]
(\widehat {\s K}_j\psi)(x,\omega,E',E):={}&\chi(E',E)(\ol {\s K}_j\psi)(x,\omega,E',E).
\]
Here the integral $\int_{S}$ is originally interpreted as a distribution. 
However, it can be shown (\cite{tervo18-up}, section 3.2)   that $\ol{\s K}_j$ is of the form 
\bea\label{eq:ol_s_K_22_j}
(\ol {\s K}_{j}\psi)(x,\omega,E',E)
=
{}&
\hat\sigma_{j}(x,E',E)
\int_{S'}
\delta(\omega'\cdot\omega - \mu(E,E'))\psi(x,\omega',E')d\omega'
\nonumber\\
={}&\hat{\sigma}_{j}(x,E',E)\int_{0}^{2\pi}\psi(x,\gamma(E',E,\omega)(s),E')ds,
\eea
where $\mu(E',E):=\sqrt{{{E(E'+2)}\over{E'(E+2)}}}$ and
where $\gamma=\gamma(E',E,\omega):[0,2\pi]\to S$
is a parametrization of the curve
\[
\Gamma(E',E,\omega)=\{\omega'\in S\ |\ \omega'\cdot\omega-\mu(E',E)=0\}.
\]
For example,  we can choose
\be
\gamma(E',E,\omega)(s)=R(\omega)\big(\sqrt{1-\mu^2}\cos(s),\sqrt{1-\mu^2}\sin(s),\mu\big),\quad s\in [0,2\pi],
\ee
where $\mu=\mu(E',E)$, and $R(\omega)$ is any rotation (unitary) matrix which maps the vector $e_3=(0,0,1)$ into $\omega$. We choose
\[
R(\omega)=\qmatrix{{{\omega_1\omega_3}\over{\sqrt{\omega_1^2+\omega_2^2}}}&
{{-\omega_1}\over{\sqrt{\omega_1^2+\omega_2^2}}} &\omega_1\\
{{\omega_2\omega_3}\over{\sqrt{\omega_1^2+\omega_2^2}}}&
{{\omega_2}\over{\sqrt{\omega_1^2+\omega_2^2}}} &\omega_2\\   
-\sqrt{1-\omega_3^2}&0&\omega_3}    
=\qmatrix{\ol\Omega_2&\ol\Omega_1&\omega}.
\]

Combining the above treatments we found 
in \cite{tervo18-up}, section 3.2  that $K$  is of the
form 
\begin{multline}
({K}\psi)(x,\omega,E)
=
{\s H}_2\big((\ol {\s K}_2\psi)(x,\omega,\cdot,E)\big)(E) 
\\
-
{\s H}_1\big((\ol {\s K}_1\psi)(x,\omega,\cdot,E)\big)(E)
+\int_{I}(\widehat {\s K}_0\psi)(x,\omega,E',E) dE',
\label{co-bb}
\end{multline}
where ${\s H}_j$, $j=1,2$, are the {\it Hadamard finite part integral operators} with respect to $E'$-variable defined by
\[
({\s H}_j u)(E):={\rm p.f.}\int_{E}^{E_m}{1\over{(E'-E)^j}}u(E')dE'.
\]
The expression \eqref{co-bb} is the {\it hyper-singular integral operator form} of $K$.

Moreover, we in \cite{tervo18-up} verified that \eqref{co-bb} can be equivalently given in the "\emph{pseudo-differential operator-like form}" by
\bea\label{co-cc}
({K}\psi)(x,\omega,E)
={}&
{\partial\over{\partial E}}\Big(
{\s H}_1\big((\ol{\s K}_2\psi)(x,\omega,\cdot,E)\big)(E)\Big)
-
{\s H}_1\big(({\p {(\ol{\s K}_2\psi)}E}(x,\omega,\cdot,E)\big)(E)
\nonumber\\
{}&
+
{\p {(\ol{\s K}_2\psi)}{E'}}(x,\omega,E',E)_{|E'=E}\nonumber\\
{}&
-
{\s H}_1\big((\ol{\s K}_1\psi)(x,\omega,\cdot,E)\big)(E)
+\int_{I}(\widehat{\s K}_0\psi)(x,\omega,E',E) dE'
\eea
where only the "first-order" Hadamard finite part integral operator ${\s H}_1$ appears.  
We neglect the details but we recall that the derivation of (\ref{co-cc}) founded on the use of Lemma \ref{hadale}.

As a conclusion we see that some 
interactions produce the first-order partial derivatives
with respect to energy $E$ combined with the Hadamard part operator (which is
a pseudo-differential-like operator; see Remark 3.3 in \cite{tervo18-up}).
For instance, in  dose calculation (radiation therapy) these problematic interactions are the primary electron-electron, primary positron-positron collisions and Bremsstrahlung. 
The exact transport operator for M\o ller scattering is
\bea\label{ex+1}
&
(T\psi)(x,\omega,E):=
-{\s H}_2\big((\ol{\s K}_2\psi)(x,\omega,\cdot,E)\big)(E) 
+
{\s H}_1\big((\ol{\s K}_1\psi)(x,\omega,\cdot,E)\big)(E)\nonumber\\
&
+\omega\cdot\nabla_x\psi+\Sigma(x,\omega,E)\psi
-(K_r\psi)(x,\omega,E)
\eea
where 
\[
(K_r\psi)(x,\omega,E):=
\int_{I'}(\widehat{\s K}_0\psi)(x,\omega,E',E) dE'.
\]
$K_r$ is called a \emph{restricted collision operator} which (by the Schur criterion) is a bounded operator $L^2(G\times S\times I)\to L^2(G\times S\times I)$. Basic properties of (more general) restricted collision operators 
are exposed in \cite{tervo17} and more widely in \cite{tervo18-up}, section 5.4.
Equivalently, in virtue of (\ref{co-cc}) $T$ can be given by
\bea\label{ex-0}
&
(T\psi)(x,\omega,E):=
-{\partial\over{\partial E}}\Big(
{\s H}_1\big((\ol{\s K}_2\psi)(x,\omega,\cdot,E)\big)(E)\Big)
+
{\s H}_1\big(({\p {(\ol{\s K}_2\psi)}E}(x,\omega,\cdot,E)\big)(E)
\nonumber\\
{}&
-
{\p {(\ol{\s K}_2\psi)}{E'}}(x,\omega,E',E)_{\big|E'=E}\nonumber\\
{}&
+\omega\cdot\nabla_x\psi+\Sigma(x,\omega,E)\psi
+
{\s H}_1\big((\ol{\s K}_1\psi)(x,\omega,\cdot,E)\big)(E)
-(K_r\psi)(x,\omega,E).
\eea
We finally mention that 
the pseudo-differential-like parts can be approximated 
by partial differential operators
(\cite{tervo18-up}, section 4) to obtain pure partial integro-differential operator approximations, so called continuous slowing down approximations (CSDA).

\section{Exact transport operator and formal adjoints}\label{exact-eq}

We shall give more details about the transport operator (\ref{ex+1}) or (\ref{ex-0}). The obtained refined expression reveals that hyper-singular transport operators generate partial differential terms with respect to angle and energy variables as well.
We start by verifying some essential tools for Hadamard finite part integrals.
The below Lemmas are needed frequently in the rest of the paper.

\subsection{Auxiliary lemmas for Hadamard finite part integrals}\label{var-for-1}

Suppose that $f\in C(\ol G,C^\alpha(I^2))$ where $\alpha>0$. Then we find  that
\[
{\rm p.f.}\int_E^{E_m}{1\over{E'-E}}f(x,E',E)dE'
=
\int_E^{E_m}{{f(x,E',E)-f(x,E,E)}\over{E'-E}}dE'
+f(x,E.E)\ln(E_m-E)
\]
since ${\rm p.f.}\int_E^{E_m}{1\over{E'-E}}dE'=\ln(E_m-E)$. 
Noting that $\int_{I}|\ln(E_m-E)|dE<\infty$ and that
\[
\Big|\int_E^{E_m}{{f(x,E',E)-f(x,E,E)}\over{E'-E}}dE'\Big|
\leq 
\int_E^{E_m}{{C_\alpha |E'-E|^\alpha}\over{|E'-E|}}dE'=C_\alpha {1\over\alpha}(E_m-E)^\alpha
\]
where $C_\alpha:=\n{f}_{C(\ol G,C^\alpha(I^2))}$
we see that the function 
\[
E\to {\rm p.f.}\int_E^{E_m}{1\over{E'-E}}f(x,E',E)dE'
\]
is integrable on $I$ (in the sense of ordinary improper Riemann integrals). Analogously we see that the function 
\[
E'\to {\rm p.f.}\int_{E_0}^{E'}{1\over{E'-E}}f(x,E',E)dE
\]
is integrable on $I'$.

We start with the next Fubin-type lemma

\begin{lemma}\label{ad-le-1}
For $f\in C(\ol G,C^\alpha( I^2)),\ \alpha>0$ we have
\be\label{vf-5}
\int_I\Big({\rm p.f.}\int_E^{E_m}{1\over{E'-E}}f(x,E',E)dE'\Big)dE
=
\int_{I'}\Big({\rm p.f.}\int_{E_0}^{E'}{1\over{E'-E}}f(x,E',E)dE\Big)dE'
\ee
\end{lemma}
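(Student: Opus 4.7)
The plan is to reduce the identity to ordinary Fubini by exploiting the $\epsilon$-regularization built into the definition of the Hadamard finite part. From \eqref{def-h1} and \eqref{def-h1-a} (the latter yielding, after the sign flip $E'-E = -(E-E')$, the correct formula for the inner integral on the right),
\[
{\rm p.f.}\int_E^{E_m}\frac{f(x,E',E)}{E'-E}dE' &= \lim_{\epsilon\to 0}\Big[\int_{E+\epsilon}^{E_m}\frac{f(x,E',E)}{E'-E}dE' + f(x,E,E)\ln\epsilon\Big],\\
{\rm p.f.}\int_{E_0}^{E'}\frac{f(x,E',E)}{E'-E}dE &= \lim_{\epsilon\to 0}\Big[\int_{E_0}^{E'-\epsilon}\frac{f(x,E',E)}{E'-E}dE + f(x,E',E')\ln\epsilon\Big].
\]

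First I would commute the outer integrals with the $\epsilon$-limit via dominated convergence. Using \eqref{c-0-a}, the bracketed quantity on the left equals
\[
\int_{E+\epsilon}^{E_m}\frac{f(x,E',E)-f(x,E,E)}{E'-E}dE' + f(x,E,E)\ln(E_m-E),
\]
and the Hölder hypothesis $f\in C(\ol G, C^\alpha(I^2))$ yields $|f(x,E',E)-f(x,E,E)|\le C_\alpha|E'-E|^\alpha$, bounding the first piece uniformly in $\epsilon$ by $\tfrac{C_\alpha}{\alpha}(E_m-E)^\alpha$ and the second by $\|f\|_\infty|\ln(E_m-E)|$, both integrable over $I$. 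An entirely analogous estimate works on the right-hand side.

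Next, since each $\epsilon$-regularized integrand is nonsingular (we have $E'-E\ge\epsilon$), ordinary Fubini applies over the common triangular region $\{(E,E'): E_0\le E\le E'-\epsilon,\ E_0+\epsilon\le E'\le E_m\}$ and gives
\[
\int_{E_0}^{E_m}\int_{E+\epsilon}^{E_m}\frac{f(x,E',E)}{E'-E}dE'\,dE = \int_{E_0+\epsilon}^{E_m}\int_{E_0}^{E'-\epsilon}\frac{f(x,E',E)}{E'-E}dE\,dE'.
\]
The two logarithmic correction terms that arise after integrating are $\ln\epsilon\cdot\int_I f(x,E,E)dE$ and $\ln\epsilon\cdot\int_{I'} f(x,E',E')dE'$, which coincide by mere relabeling of the dummy variable. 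Hence the two $\epsilon$-regularized expressions agree for every $\epsilon>0$, and letting $\epsilon\to 0$ produces \eqref{vf-5}.

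The only technical issue is the passage to the limit under the outer integral, and that is exactly where the Hölder hypothesis is indispensable: it supplies the $\epsilon$-independent integrable dominating function. Everything else is bookkeeping.
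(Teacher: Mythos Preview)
Your proof is correct and follows essentially the same route as the paper: introduce the $\epsilon$-regularized versions of both sides, show they agree for each $\epsilon>0$ by ordinary Fubini together with the observation that the two logarithmic correction terms coincide, and then pass to the limit using dominated convergence with the H\"older bound supplying the integrable majorant. The paper organizes the steps in the opposite order (first Fubini, then the dominated-convergence estimate) and writes the Fubini step via characteristic functions rather than the explicit triangular region, but the substance is identical.
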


\begin{proof}
Define integrals
\[
I_\epsilon^1:=
\int_I\Big(\int_{E+\epsilon}^{E_m}{1\over{E'-E}}f(x,E',E)dE'+f(x,E,E)\ln(\epsilon)\Big)dE,
\]
\[
I_\epsilon^2:=
\int_{I'}\Big(\int_{E_0}^{E'-\epsilon}{1\over{E'-E}}f(x,E',E)dE
+f(x,E',E')\ln(\epsilon)\Big)dE'.
\]
By the Fubin's Theorem
\bea\label{vf-9}
&
I_\epsilon^1:=
\int_I\Big(\int_{I'}\chi_{\R_+}(E'-(E+\epsilon)){1\over{E'-E}}f(x,E',E)dE'\Big)dE+\int_If(x,E,E)\ln(\epsilon)dE\nonumber\\
&
=
\int_{I'}\Big(\int_{I}\chi_{\R_+}(E'-(E+\epsilon)){1\over{E'-E}}f(x,E',E)dE\Big)dE'+\int_{I}f(x,E,E)\ln(\epsilon)dE\nonumber\\
&
=
\int_{I'}\Big(\int_{E_0}^{E'-\epsilon}{1\over{E'-E}}f(x,E',E)dE\Big)dE'+\int_{I'}f(x,E',E')\ln(\epsilon)dE'=:I_\epsilon^2.\nonumber\\
\eea

Let
\[
F_\epsilon^1(x,E):=
\int_{E+\epsilon}^{E_m}{1\over{E'-E}}f(x,E',E)dE'+f(x,E,E)\ln(\epsilon).
\]
Then we have
\[
&
F_\epsilon^1(x,E):=
\int_{E+\epsilon}^{E_m}{{f(x,E',E)-f(x,E,E)}\over{E'-E}}dE'
+\int_{E+\epsilon}^{E_m}{{f(x,E,E)}\over{E'-E}}dE'
+f(x,E,E)\ln(\epsilon)\\
&
=\int_{E+\epsilon}^{E_m}{{f(x,E',E)-f(x,E,E)}\over{E'-E}}dE'
+f(x,E,E)\ln(E_m-E)
\]
since $\int_{E+\epsilon}^{E_m}{1\over{E'-E}}dE'=\ln(E_m-E)-\ln(\epsilon)$.
Noting that (here $C_\alpha$ is as above)
\[
\Big|{{f(x,E',E)-f(x,E,E)}\over{E'-E}}\Big|
\leq C_\alpha |E'-E|^{\alpha-1}
\]
we find that 
\bea\label{vf-10}
&
|F_\epsilon^1(x,E)|\leq 
\int_{E+\epsilon}^{E_m}
C_\alpha|E'-E|^{\alpha-1}dE'+|f(x,E,E)\ln(E_m-E)|
\nonumber\\
&
=C_\alpha{1\over\alpha}\big((E_m-E)^\alpha-\epsilon^\alpha\big)+|f(x,E,E)\ln(E_m-E)|.
\eea
Hence the sequence $\{F_\epsilon^1(x,.)\}$ is bounded by an integrable function.
By the definition of the Hadamard finite part integrals
\be\label{vf-11}
\lim_{\epsilon\to 0}F_\epsilon^1(x,E)
={\rm p.f.}\int_E^{E_m}{1\over{E'-E}}f(x,E',E)dE'
\ee
Similarly we see that the sequence of functions
\[
F_\epsilon^2(x,E'):=
\int_{E_0}^{E'-\epsilon}{1\over{E'-E}}f(x,E',E)dE
+f(x,E',E')\ln(\epsilon)
\]
is bounded by an integrable function 
and
\be\label{vf-12}
\lim_{\epsilon\to 0}F_\epsilon^2(x,E')
={\rm p.f.}\int_{E_0}^{E'}{1\over{E'-E}}f(x,E',E)dE.
\ee

In virtue of (\ref{vf-9})
\be\label{vf-13}
\int_IF_\epsilon^1(x,E)dE=I_\epsilon^1=I_\epsilon^2=\int_{I'}F_\epsilon^2(x,E')dE'.
\ee
The Lebesgue Dominated Convergence Theorem implies by (\ref{vf-11}), (\ref{vf-12}), (\ref{vf-13}) that
\bea\label{vf-14-a}
&
\int_I\Big({\rm p.f.}\int_E^{E_m}{1\over{E'-E}}f(x,E',E)dE'\Big)dE
=\int_I\lim_{\epsilon\to 0}F_\epsilon^1(x,E)dE\nonumber\\
&
=
\lim_{\epsilon\to 0}
\int_IF_\epsilon^1(x,E)dE=\lim_{\epsilon\to 0}\int_{I'}F_\epsilon^2(x,E')dE'
\nonumber\\
&
=\int_{I'}\lim_{\epsilon\to 0}F_\epsilon^2(x,E')dE'
=
\int_{I'}\Big({\rm p.f.}\int_{E_0}^{E'}{1\over{E'-E}}f(x,E',E)dE'\Big)dE'
\eea
which completes the proof.
\end{proof}

We prove also the next partial integration-like results.

\begin{lemma}\label{ad-le-3-a}
Suppose that $f\in C(\ol G\times I,C^{\alpha}( I')),\ \alpha>0$.
Then   
\bea\label{adle3-1-a}
{\rm p.f.}\int_E^{E_m}{1\over{E'-E}}&f(x,E',E)dE'
\nonumber\\
&
=
-
\int_E^{E_m}\ln(E'-E){\p f{E'}}(x,E',E)dE'
+\ln(E_m-E)f(x,E_m,E).
\eea 
\end{lemma}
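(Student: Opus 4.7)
The plan is to start directly from the regularized definition of the Hadamard finite part and perform an integration by parts on the regular integral, then pass to the limit. The key observation is that the boundary term at the lower endpoint $E+\epsilon$, which diverges as $\epsilon \to 0$, can be matched against the counterterm $f(x,E,E)\ln(\epsilon)$ appearing in the definition, leaving a remainder that vanishes by Hölder continuity.

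More precisely, I would begin by recalling that by definition \eqref{def-h1},
\begin{align*}
{\rm p.f.}\int_E^{E_m}\frac{f(x,E',E)}{E'-E}dE' = \lim_{\epsilon\to 0}\Bigl(\int_{E+\epsilon}^{E_m}\frac{f(x,E',E)}{E'-E}dE' + f(x,E,E)\ln(\epsilon)\Bigr).
\end{align*}
On the proper Riemann integral $\int_{E+\epsilon}^{E_m}\tfrac{f(x,E',E)}{E'-E}dE'$, I would integrate by parts with $u=f(x,E',E)$ and $dv = \tfrac{1}{E'-E}dE'$, so $v=\ln(E'-E)$. This produces
\begin{align*}
\int_{E+\epsilon}^{E_m}\frac{f(x,E',E)}{E'-E}dE' = f(x,E_m,E)\ln(E_m-E) - f(x,E+\epsilon,E)\ln(\epsilon) - \int_{E+\epsilon}^{E_m}\ln(E'-E)\frac{\partial f}{\partial E'}(x,E',E)dE'.
\end{align*}

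Adding the correction $f(x,E,E)\ln(\epsilon)$ from the definition, the divergent contributions collect into $\bigl[f(x,E,E)-f(x,E+\epsilon,E)\bigr]\ln(\epsilon)$. Here is where the Hölder hypothesis enters: since $E'\mapsto f(x,E',E)$ is $C^\alpha$ with $\alpha>0$, one has $|f(x,E,E)-f(x,E+\epsilon,E)| \leq C\epsilon^\alpha$, and $\epsilon^\alpha|\ln(\epsilon)|\to 0$. Hence this boundary discrepancy vanishes as $\epsilon\to 0$. Simultaneously, since $\ln(E'-E)$ is in $L^1([E,E_m])$ and $\partial f/\partial E'$ is bounded, the dominated convergence theorem yields
\begin{align*}
\lim_{\epsilon\to 0}\int_{E+\epsilon}^{E_m}\ln(E'-E)\frac{\partial f}{\partial E'}(x,E',E)dE' = \int_{E}^{E_m}\ln(E'-E)\frac{\partial f}{\partial E'}(x,E',E)dE'.
\end{align*}
Combining these two limits with the surviving boundary term $f(x,E_m,E)\ln(E_m-E)$ gives exactly \eqref{adle3-1-a}.

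The only delicate point is the justification that $[f(x,E,E)-f(x,E+\epsilon,E)]\ln(\epsilon)\to 0$, but this is precisely what the regularity assumption $f(x,\cdot,E)\in C^\alpha(I')$ with $\alpha>0$ is tailored for; without it the boundary contributions would not balance. Everything else is a standard integration-by-parts computation combined with dominated convergence, analogous in spirit to the manipulations carried out in the proof of Lemma \ref{ad-le-1}.
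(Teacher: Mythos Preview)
Your proof is correct and follows essentially the same approach as the paper: regularize, integrate by parts with $v=\ln(E'-E)$, and pass to the limit. In fact your write-up is more careful than the paper's, which writes the boundary contribution at $E+\epsilon$ together with the counterterm as if they cancelled identically and only afterwards says ``letting $\epsilon\to 0$''; you correctly isolate the discrepancy $[f(x,E,E)-f(x,E+\epsilon,E)]\ln(\epsilon)$ and kill it with the H\"older bound, and you justify the passage to the limit in the remaining integral via dominated convergence.
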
 

\begin{proof}
Using partial integration we have
\bea\label{adle3-2-a}
&
\int_{E+\epsilon}^{E_m}{1\over{E'-E}}f(x,E',E)dE'
+\ln(\epsilon)f(x,E,E)
\nonumber\\
&
=
\Big|_{E+\epsilon}^{E_m}\ln(E'-E)f(x,E',E)
-
\int_{E+\epsilon}^{E_m}\ln(E'-E){\p f{E'}}(x,E',E)dE'
+\ln(\epsilon)f(x,E,E)
\nonumber\\
&
=
-
\int_{E+\epsilon}^{E_m}\ln(E'-E){\p f{E'}}(x,E',E)dE'
+\ln(E_m-E)f(x,E_m,E)
\eea
from which the assertion follows by letting $\epsilon\to 0$.

\end{proof}

\begin{lemma}\label{ad-le-3}
Suppose that $f\in C(\ol G\times I,C^{1+\alpha}( I')),\ \alpha>0$.
Then   for $E\not=E_m$
\bea\label{adle3-1}
&
{\rm p.f.}\int_E^{E_m}{1\over{(E'-E)^2}}f(x,E',E)dE'
\nonumber\\
&
=
{\rm p.f.}\int_E^{E_m}{1\over{E'-E}}{\p f{E'}}(x,E',E)dE'+{\p f{E'}}(x,E,E)_+
-{1\over{E_m-E}}f(x,E_m,E)
\eea
where ${\p f{E'}}(x,E,E)_+$ denotes the right hand partial derivative (actually here ${\p f{E'}}(x,E,E)_+={\p f{E'}}(x,E,E)$). 
\end{lemma}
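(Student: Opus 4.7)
The plan is a direct integration-by-parts argument, carried out on the truncated integrals that appear in the defining limit of the second-order Hadamard finite part, and then taking $\epsilon\to 0$ at the end. The required regularity $f\in C(\ol G\times I,C^{1+\alpha}(I'))$ is exactly what is needed to control the boundary terms arising at $E'=E+\epsilon$ and to ensure the first-order p.f.\ integral on the right-hand side makes sense via (\ref{c-0-a}).

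First I would write out, using (\ref{def-h2}) with $b=E_m$ and the variable $t=E'$,
\[
{\rm p.f.}\int_E^{E_m}{f(x,E',E)\over{(E'-E)^2}}dE'
=\lim_{\epsilon\to 0}\Big(\int_{E+\epsilon}^{E_m}{f(x,E',E)\over{(E'-E)^2}}dE'
+{\p f{E'}}(x,E,E)_+\ln(\epsilon)-{1\over\epsilon}f(x,E,E)\Big).
\]
Then I would integrate by parts in the $E'$-integral with $u=f(x,E',E)$ and $dv=(E'-E)^{-2}dE'$, so that $v=-(E'-E)^{-1}$, obtaining
\[
\int_{E+\epsilon}^{E_m}{f(x,E',E)\over{(E'-E)^2}}dE'
=-{f(x,E_m,E)\over{E_m-E}}+{1\over\epsilon}f(x,E+\epsilon,E)
+\int_{E+\epsilon}^{E_m}{1\over{E'-E}}{\p f{E'}}(x,E',E)dE'.
\]

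Next I would substitute this back and group terms. The two $1/\epsilon$ contributions combine into
\[
{1\over\epsilon}\bigl(f(x,E+\epsilon,E)-f(x,E,E)\bigr)\longrightarrow {\p f{E'}}(x,E,E)_+
\]
as $\epsilon\to 0$, which accounts for the right-hand derivative in the statement. The remaining piece
\[
\int_{E+\epsilon}^{E_m}{1\over{E'-E}}{\p f{E'}}(x,E',E)dE'+{\p f{E'}}(x,E,E)_+\ln(\epsilon)
\]
converges to ${\rm p.f.}\int_E^{E_m}{1\over{E'-E}}{\p f{E'}}(x,E',E)dE'$ by the definition (\ref{def-h1}) applied to the $C^\alpha$-function $\p f{E'}(x,\cdot,E)$; here the H\"older regularity guaranteed by $f\in C^{1+\alpha}$ in $E'$ is precisely what ensures existence of the limit (cf.\ (\ref{c-0-a})). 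The term $-f(x,E_m,E)/(E_m-E)$ is independent of $\epsilon$ and passes through the limit untouched, and this is where the hypothesis $E\neq E_m$ enters.

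The only nontrivial step, which I would expect to be the main obstacle, is verifying that the $\ln(\epsilon)$ coefficient produced by the definition on the left matches exactly the $\ln(\epsilon)$ coefficient one must insert on the right to recover a first-order p.f.\ integral of $\p f{E'}$; both equal ${\p f{E'}}(x,E,E)_+$, so no spurious boundary residue survives. This is a clean analogue of the derivation of Lemma \ref{ad-le-3-a}, the difference being only that the integration-by-parts boundary term at $E'=E+\epsilon$ is now singular of order $1/\epsilon$ rather than logarithmic, which is compensated exactly by the $-f(x,E,E)/\epsilon$ term already built into the definition (\ref{def-h2}).
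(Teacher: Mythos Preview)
Your proof is correct and follows essentially the same route as the paper's own argument: integrate by parts on the truncated integral, regroup the $1/\epsilon$ terms into a difference quotient and the $\ln(\epsilon)$ terms into the defining expression for the first-order finite part, and pass to the limit. The paper's proof is slightly more compressed but identical in substance.
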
 

\begin{proof}
Using partial integration we have for $\epsilon>0$
\bea\label{adle3-2}
&
\int_{E+\epsilon}^{E_m}{1\over{(E'-E)^2}}f(x,E',E)dE'
-{1\over\epsilon}f(x,E,E)+\ln(\epsilon){\p f{E'}}(x,E,E)
\nonumber\\
&
=
\Big|_{E+\epsilon}^{E_m}-{1\over{E'-E}}f(x,E',E)
+
\int_{E+\epsilon}^{E_m}{1\over{E'-E}}{\p f{E'}}(x,E',E)dE'
\nonumber\\
&
-{1\over\epsilon}f(x,E,E)+\ln(\epsilon){\p f{E'}}(x,E,E)
\nonumber\\
&
=
\Big(\int_{E+\epsilon}^{E_m}{1\over{E'-E}}{\p f{E'}}(x,E',E)dE'
+\ln(\epsilon){\p f{E'}}(x,E,E)\Big)
\nonumber\\
&
+{{f(x,E+\epsilon,E)-f(x,E,E)}\over\epsilon}-{1\over{E_m-E}}f(x,E_m,E)
\eea
from which the assertion follows by letting $\epsilon\to 0^+$.

\end{proof}

Similarly we have 

\begin{lemma}\label{ad-le-3-aa}
Suppose that $f\in C(\ol G\times I',C^{1+\alpha}( I)),\ \alpha>0$.
Then   for $E\not=E_0$
\bea\label{adle3-1-b}
&
{\rm p.f.}\int_{E_0}^{E'}{1\over{(E'-E)^2}}f(x,E',E)dE
\nonumber\\
&
=
-{\rm p.f.}\int_{E_0}^{E'}{1\over{E'-E}}{\p f{E}}(x,E',E)dE-{\p f{E}}(x,E',E')_-
-{1\over{E'-E_0}}f(x,E',E_0)
\eea
where ${\p f{E}}(x,E',E')_-$ denotes the left hand partial derivative. 
\end{lemma}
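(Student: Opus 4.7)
The plan is to adapt the argument of Lemma \ref{ad-le-3} essentially verbatim, with the only change being that the singular endpoint has moved from the \emph{lower} limit to the \emph{upper} limit of integration. Accordingly, I would use definition (\ref{def-h2-a}) for the left-hand side, namely write
\[
{\rm p.f.}\!\int_{E_0}^{E'}\!\frac{f(x,E',E)}{(E'-E)^2}dE
= \lim_{\epsilon\to 0^+}\Big(\int_{E_0}^{E'-\epsilon}\!\frac{f(x,E',E)}{(E'-E)^2}dE
-{\p f{E}}(x,E',E')_-\ln(\epsilon)-\frac{1}{\epsilon}f(x,E',E')\Big),
\]
and then integrate by parts in the $\epsilon$-approximating integral using the antiderivative $\frac{\partial}{\partial E}\bigl[1/(E'-E)\bigr]=1/(E'-E)^2$, so taking $u=f(x,E',E)$ and $v=1/(E'-E)$.

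The boundary term at $E=E_0$ produces the static contribution $-f(x,E',E_0)/(E'-E_0)$. The boundary term at $E=E'-\epsilon$ produces $f(x,E',E'-\epsilon)/\epsilon$, which I would combine with the regularizing term $-f(x,E',E')/\epsilon$ coming from (\ref{def-h2-a}) to form a \emph{backward} difference quotient
\[
\frac{f(x,E',E'-\epsilon)-f(x,E',E')}{\epsilon}\longrightarrow -{\p f{E}}(x,E',E')_-
\quad\text{as }\epsilon\to 0^+.
\]
The sign here is the one accounting carefully for; this is essentially the only place where the calculation differs qualitatively from Lemma \ref{ad-le-3}.

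Finally, the integral term produced by integration by parts, namely
$-\int_{E_0}^{E'-\epsilon}\frac{1}{E'-E}{\p f{E}}(x,E',E)\,dE$, must be combined with the remaining regularizing piece $-{\p f{E}}(x,E',E')\ln(\epsilon)$. Recognizing this combination through the equivalent form of (\ref{def-h1-a}) written for $1/(x-t)$ with $x=E'$, I identify the limit as $-{\rm p.f.}\int_{E_0}^{E'}\frac{1}{E'-E}{\p f{E}}(x,E',E)\,dE$. Collecting the three surviving contributions yields exactly (\ref{adle3-1-b}).

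The only genuine obstacle is sign bookkeeping, because the singular endpoint is now approached from below and because the one-sided derivative that appears naturally is the left derivative; the regularity hypothesis $f\in C(\ol G\times I',C^{1+\alpha}(I))$ is exactly what is needed so that the integration by parts is justified on $[E_0,E'-\epsilon]$ and so that the Hadamard finite parts in the statement make sense in the pointwise sense used in (\ref{c-0-a}).
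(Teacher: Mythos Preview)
Your proposal is correct and is exactly the approach the paper intends: the paper omits the proof with the remark ``Similarly we have'', meaning precisely the adaptation of the proof of Lemma~\ref{ad-le-3} via integration by parts in the $\epsilon$-approximant of (\ref{def-h2-a}), with the sign adjustments you describe. Your identification of the backward difference quotient converging to $-{\p f{E}}(x,E',E')_-$ and of the remaining pieces as the $1/(E'-E)$ finite part via the second form of (\ref{def-h1-a}) is the whole argument.
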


The previous lemma \ref{ad-le-3}  immediately gives

\begin{lemma}\label{ad-le-4}
Suppose that $f\in C(\ol G,C^{1+\alpha}(I^2))$. Then
\bea\label{adth-1}
&
\int_I\Big({\rm p.f.}\int_E^{E_m}{1\over{(E'-E)^2}}f(x,E',E)dE'\Big)dE
=
\int_{I'}\Big({\rm p.f.}\int_{E_0}^{E'}{1\over{E'-E}}{\p f{E'}}(x,E',E)dE\Big)dE'
\nonumber\\
&
+\int_{I'}{\p f{E'}}(x,E',E')_+dE'
-\int_{I'}{1\over{E_m-E'}}f(x,E_m,E')dE'
\eea
when the integral $\int_{I'}{1\over{E_m-E'}}f(x,E_m,E')dE'$ exists.

\end{lemma}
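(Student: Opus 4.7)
The plan is to derive this identity by composing the two preceding lemmas in a straightforward way: first use Lemma \ref{ad-le-3} to convert the second-order Hadamard finite part inside the inner integral into a first-order one plus boundary and diagonal terms, then use Lemma \ref{ad-le-1} to swap the order of integration in the resulting first-order double integral.

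More concretely, I would proceed as follows. Since $f\in C(\ol G,C^{1+\alpha}(I^2))$, the hypothesis of Lemma \ref{ad-le-3} is satisfied for every fixed $E\in I^\circ$, so
\[
{\rm p.f.}\int_E^{E_m}\frac{f(x,E',E)}{(E'-E)^2}\,dE'
=
{\rm p.f.}\int_E^{E_m}\frac{1}{E'-E}{\p f{E'}}(x,E',E)\,dE'
+{\p f{E'}}(x,E,E)_+
-\frac{f(x,E_m,E)}{E_m-E}.
\]
I would integrate this equality over $E\in I$. The diagonal term $\int_I {\p f{E'}}(x,E,E)_+\,dE$ exists as an ordinary Riemann integral because ${\p f{E'}}$ is continuous, and after renaming the dummy variable $E\mapsto E'$ it matches $\int_{I'}{\p f{E'}}(x,E',E')_+\,dE'$. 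The integral $\int_I \frac{1}{E_m-E}f(x,E_m,E)\,dE$ exists by hypothesis and likewise becomes $\int_{I'}\frac{1}{E_m-E'}f(x,E_m,E')\,dE'$ upon relabelling.

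It remains to handle the first term $\int_I \bigl({\rm p.f.}\int_E^{E_m}\frac{1}{E'-E}{\p f{E'}}(x,E',E)\,dE'\bigr)dE$. Setting $g(x,E',E):={\p f{E'}}(x,E',E)$, the regularity of $f$ gives $g\in C(\ol G,C^{\alpha}(I^2))$, which is exactly the hypothesis required in Lemma \ref{ad-le-1}. Applying that lemma yields
\[
\int_I\Big({\rm p.f.}\int_E^{E_m}\frac{g(x,E',E)}{E'-E}\,dE'\Big)dE
=\int_{I'}\Big({\rm p.f.}\int_{E_0}^{E'}\frac{g(x,E',E)}{E'-E}\,dE\Big)dE',
\]
which is precisely the first term on the right-hand side of \eqref{adth-1}.

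The proof is largely a bookkeeping exercise: no real analytic obstacle arises once we have Lemmas \ref{ad-le-3} and \ref{ad-le-1}. The only point requiring a little care is verifying that the regularity $C(\ol G,C^{1+\alpha}(I^2))$ of $f$ is strong enough to apply both lemmas simultaneously — it provides the $C^{1+\alpha}$ smoothness in $E'$ for Lemma \ref{ad-le-3} and, after differentiating in $E'$, enough Hölder regularity in $(E',E)$ for Lemma \ref{ad-le-1}. No further estimates or limit passages are needed beyond those already contained in the two lemmas.
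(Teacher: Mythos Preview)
Your proposal is correct and follows exactly the same route as the paper: apply Lemma \ref{ad-le-3} pointwise in $E$ to reduce the second-order finite part to a first-order one plus the diagonal and boundary terms, integrate over $I$, and then invoke Lemma \ref{ad-le-1} (applied to $g={\p f{E'}}$) to swap the order of integration in the remaining first-order term. The regularity check you mention is the only point the paper leaves implicit, and you handle it correctly.
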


\begin{proof}
In virtue of Lemma \ref{ad-le-3}
\bea\label{adth-2}
&
\int_I\Big({\rm p.f.}\int_E^{E_m}{1\over{(E'-E)^2}}f(x,E',E)dE'\Big)dE
=
\int_I\Big({\rm p.f.}\int_E^{E_m}{1\over{E'-E}}{\p f{E'}}(x,E',E)dE'\Big)dE
\nonumber\\
&
+\int_I{\p f{E'}}(x,E,E)_+dE
-\int_I{1\over{E_m-E}}f(x,E_m,E)dE.
\eea
Hence the assertion follows from Lemma \ref{ad-le-1}.

\end{proof}

Moreover, we have the following Fubin-type theorem

\begin{lemma}\label{ad-le-5}
For $f\in C(\ol G\times S\times I,C^{1+\alpha}(I')),\ \alpha>0$ and for $E\not=E_m$
\be\label{var-for-9}
\int_S\Big({\rm p.f.}\int_E^{E_m}{1\over{(E'-E)^2}}f(x,E',E,\omega)dE'\Big)d\omega
=
{\rm p.f.}\int_E^{E_m}{1\over{(E'-E)^2}}\Big(\int_Sf(x,E',E,\omega)d\omega\Big)dE'.
\ee
\end{lemma}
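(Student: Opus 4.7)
The natural strategy is to reduce the second-order Hadamard finite part integral to a first-order one via Lemma \ref{ad-le-3}, then exchange the resulting first-order (or regular) integrals with $\int_S$ using the same truncation-and-dominated-convergence method that was used in the proof of Lemma \ref{ad-le-1}, and finally reassemble the pieces using Lemma \ref{ad-le-3} again, this time applied to the function $F(x,E',E):=\int_S f(x,E',E,\omega)d\omega$.

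More concretely, first I would apply Lemma \ref{ad-le-3} (for each fixed $(x,E,\omega)$ with $E\neq E_m$, using the hypothesis $f(x,\cdot,E,\omega)\in C^{1+\alpha}(I')$) to rewrite
\[
{\rm p.f.}\int_E^{E_m}{1\over{(E'-E)^2}}f(x,E',E,\omega)dE'
=
{\rm p.f.}\int_E^{E_m}{1\over{E'-E}}{\p f{E'}}(x,E',E,\omega)dE'
+{\p f{E'}}(x,E,E,\omega)
-{1\over{E_m-E}}f(x,E_m,E,\omega).
\]
Integrating this identity over $\omega\in S$, the last two terms give (by ordinary Fubini on absolutely convergent integrals) $\int_S{\p f{E'}}(x,E,E,\omega)d\omega - {1\over{E_m-E}}\int_S f(x,E_m,E,\omega)d\omega$, so everything reduces to proving the Fubini-type exchange
\[
\int_S\Big({\rm p.f.}\int_E^{E_m}{1\over{E'-E}}{\p f{E'}}(x,E',E,\omega)dE'\Big)d\omega
=
{\rm p.f.}\int_E^{E_m}{1\over{E'-E}}\Big(\int_S{\p f{E'}}(x,E',E,\omega)d\omega\Big)dE'
\]
for the \emph{first-order} Hadamard integral.

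For this exchange I would mimic the argument of Lemma \ref{ad-le-1}. Writing
\[
F_\epsilon(x,E,\omega):=\int_{E+\epsilon}^{E_m}{1\over{E'-E}}{\p f{E'}}(x,E',E,\omega)dE'+{\p f{E'}}(x,E,E,\omega)\ln(\epsilon),
\]
the same Hölder-type splitting that produced the estimate (\ref{vf-10}) gives, using $f\in C(\ol G\times S\times I,C^{1+\alpha}(I'))$, a bound $|F_\epsilon(x,E,\omega)|\leq C_\alpha{1\over\alpha}(E_m-E)^\alpha+|{\p f{E'}}(x,E,E,\omega)\ln(E_m-E)|$ that is uniform in $\epsilon$ and continuous in $\omega$, hence integrable on the compact set $S$. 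By the Dominated Convergence Theorem the limit $\epsilon\to 0$ may be taken inside $\int_S$, and inside the truncated integral we can use ordinary Fubini. This yields the displayed first-order exchange. Finally, applying Lemma \ref{ad-le-3} once more to the function $F(x,E',E):=\int_S f(x,E',E,\omega)d\omega$ (which belongs to the required smoothness class because the parameter $\omega$ is integrated over a compact manifold and $f$ is jointly regular) reassembles the boundary terms and reproduces ${\rm p.f.}\int_E^{E_m}{1\over{(E'-E)^2}}F(x,E',E)dE'$.

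The only delicate step is the dominated-convergence justification, i.e.\ producing a single $\omega$-integrable majorant for $F_\epsilon$ that is independent of $\epsilon$; this is where the $C^{1+\alpha}$ regularity in $E'$ (inherited uniformly in $(x,\omega,E)$) is essential, exactly as in the bound (\ref{vf-10}) from Lemma \ref{ad-le-1}. Everything else is bookkeeping of boundary contributions via Lemma \ref{ad-le-3}.
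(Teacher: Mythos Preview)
Your proposal is correct but takes a genuinely different route from the paper's own proof. The paper works directly with the second-order Hadamard definition \eqref{def-h2}: it writes the truncated expression
\[
F_\epsilon(x,E,\omega)=\int_{E+\epsilon}^{E_m}{1\over{(E'-E)^2}}f\,dE'-{1\over\epsilon}f(x,E,E,\omega)+\ln(\epsilon){\p f{E'}}(x,E,E,\omega),
\]
applies ordinary Fubini to $\int_S F_\epsilon\,d\omega$ (getting exactly the truncated expression for the right-hand side), and then justifies $\int_S\lim_{\epsilon\to 0}F_\epsilon=\lim_{\epsilon\to 0}\int_S F_\epsilon$ via dominated convergence, producing the majorant by inserting the first-order Taylor expansion of $f$ in $E'$ and using the $C^{1+\alpha}$ H\"older bound on the remainder.

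Your approach is more modular: you first reduce to the first-order case via Lemma~\ref{ad-le-3}, perform the (easier) first-order Fubini exchange with $\int_S$ by the method of Lemma~\ref{ad-le-1}, and then reassemble with Lemma~\ref{ad-le-3} applied to $F=\int_S f\,d\omega$. This has the advantage of reusing existing lemmas and of isolating the dominated-convergence step at the first-order level, where only $C^\alpha$ regularity of ${\p f{E'}}$ is needed and the majorant (your analogue of \eqref{vf-10}) is slightly simpler to produce. The paper's direct approach is shorter in that it avoids the disassemble/reassemble bookkeeping, but it requires building the second-order majorant from scratch via Taylor. Both arguments ultimately hinge on the same $C^{1+\alpha}$ hypothesis and compactness of $S$; note also that the first-order exchange you need is exactly the content of \eqref{var-for-12}, which the paper records immediately after this lemma.
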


\begin{proof}
By Fubin's Theorem we have
\bea\label{var-for-10}
&
\int_S\Big[\int_{E+\epsilon}^{E_m}{1\over{(E'-E)^2}}f(x,E',E,\omega)dE'
-{1\over\epsilon}f(x,E,E,\omega)+\ln(\epsilon){\p f{E'}}(x,E,E,\omega)\Big]d\omega
\nonumber\\
&
=
\int_{E+\epsilon}^{E_m}{1\over{(E'-E)^2}}\Big(\int_Sf(x,E',E,\omega)d\omega\Big)dE'
\nonumber\\
&
-{1\over\epsilon}\int_Sf(x,E,E,\omega)d\omega
+\ln(\epsilon)\int_S{\p f{E'}}(x,E,E,\omega)d\omega\nonumber\\
&
=
\int_{E+\epsilon}^{E_m}{1\over{(E'-E)^2}}\Big(\int_Sf(x,E',E,\omega)d\omega\Big)dE'
\nonumber\\
&
-{1\over\epsilon}\int_Sf(x,E,E,\omega)d\omega
+\ln(\epsilon)
{\partial\over{\partial E'}}\Big(\int_S f(x,.,E,\omega)d\omega\Big)_{|E'=E}.
\eea
Denote
\[
F_\epsilon(x,E,\omega):=
\int_{E+\epsilon}^{E_m}{1\over{(E'-E)^2}}f(x,E',E,\omega)dE'
-{1\over\epsilon}f(x,E,E,\omega)+\ln(\epsilon){\p f{E'}}(x,E,E,\omega).
\]
Applying the Taylor's formula
\[
f(x,E',E,\omega)=f(x,E,E,\omega)+\int_0^1{\p f{E'}}(x,E+t(E'-E),E,\omega)dt\ \cdot(E'-E)
\]
and the identity
\[
&
\int_0^1{1\over{E'-E}}{\p f{E'}}(x,E+t(E'-E),E,\omega)dt=
{1\over{E'-E}}{\p f{E'}}(x,E,E,\omega)
\\
&
+
\int_0^1{1\over{E'-E}}\Big({\p f{E'}}(x,E+t(E'-E),E,\omega)-{\p f{E'}}(x,E,E,\omega)\Big)dt
\]
we see analogously to the proof of Lemma \ref{ad-le-1}  that by the Lebesgue Dominated Convergence Theorem 
\be\label{var-for-11}
\int_S\lim_{\epsilon\to 0}F_\epsilon(x,E,\omega) d\omega
=
\lim_{\epsilon\to 0}\int_SF_\epsilon(x,E,\omega) d\omega.
\ee
In virtue of definition (\ref{def-h2}) and (\ref{var-for-10}), (\ref{var-for-11})
\[
&
\int_S\Big({\rm p.f.}\int_E^{E_m}{1\over{(E'-E)^2}}f(x,E',E,\omega)dE'\Big)d\omega\\
&
=
\int_S\lim_{\epsilon\to 0}\Big(
\int_{E+\epsilon}^{E_m}{1\over{(E'-E)^2}}f(x,E',E,\omega)dE'
-{1\over\epsilon}f(x,E,E,\omega)+\ln(\epsilon){\p f{E'}}(x,E,E,\omega)\Big)
d\omega\\
&
=\int_S\lim_{\epsilon\to 0}F_\epsilon(x,E,\omega) d\omega
=
\lim_{\epsilon\to 0}\int_SF_\epsilon(x,E,\omega) d\omega\\
&
= 
{\rm p.f.}\int_E^{E_m}{1\over{(E'-E)^2}}\Big(\int_Sf(x,E',E,\omega)d\omega\Big)dE'
\]
which completes the proof.
\end{proof}

In the same way (but more easily) we find that
for $f\in C(\ol G\times S\times I,C^{\alpha}(I')),\ \alpha>0$ and $E\not=E_m$
\be\label{var-for-12}
\int_S\Big({\rm p.f.}\int_E^{E_m}{1\over{E'-E}}f(x,E',E,\omega)dE'\Big)d\omega
=
{\rm p.f.}\int_E^{E_m}{1\over{E'-E}}\Big(\int_Sf(x,E',E,\omega)d\omega\Big)dE'.
\ee

The integral $\int_S\int_0^{2\pi}\psi(x,\gamma(E',E,\omega)(s),E')v(x,\omega,E)ds d\omega$ below (in sections \ref{ad-sec} and \ref{var-for}) emerging from M\o ller collision operator needs a special treatment which is yielded by the next lemma.

\begin{lemma}\label{le-m:0}
Let $-1<t<1$, and let for any $\omega\in S$ 
\[
\Gamma_{t}^\omega=\{\omega'\in S\ |\ \omega\cdot\omega'=t\}.
\]
Then for any $f\in C(S\times S')$ one has
\begin{align}\label{eq:le-m:0}
\int_S \int_{\Gamma_t^\omega} f(\omega,\omega')d\ell(\omega') d\omega
=
\int_{S'} \int_{\Gamma_t^{\omega'}} f(\omega,\omega')d\ell(\omega) d\omega'
\end{align}
where $\int_{\Gamma_t^\omega} h(\omega')d\ell(\omega')$ is the path integral of $h$ along $\Gamma_t^\omega$.
\end{lemma}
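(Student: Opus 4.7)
The plan is to reduce (\ref{eq:le-m:0}) to Fubini's theorem on $S\times S'$ via the coarea formula applied to the symmetric function $\Phi(\omega,\omega'):=\omega\cdot\omega'$, whose level sets are precisely the manifolds containing the slices parametrized on both sides of the claim. The underlying reason the identity holds is that the set $\{(\omega,\omega')\in S\times S'\ |\ \omega\cdot\omega'=t\}$ is invariant under swapping the two arguments, so computing a double integral over it in either order must produce the same quantity.

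First, for fixed $\omega\in S$ consider the smooth function $u_\omega(\omega'):=\omega\cdot\omega'$ on $S$. Its tangential gradient is $\omega-(\omega\cdot\omega')\omega'$, with Euclidean norm $\sqrt{1-(\omega\cdot\omega')^2}$; on $\Gamma_t^\omega$ this equals $\sqrt{1-t^2}$, which is strictly positive for $t\in(-1,1)$. The standard coarea formula on $S$ therefore yields
\[
\int_S h(\omega')\,d\omega' = \int_{-1}^{1}\frac{1}{\sqrt{1-t^2}}\int_{\Gamma_t^\omega} h(\omega')\,d\ell(\omega')\,dt,\qquad h\in C(S),
\]
and by the symmetry of $\Phi$ the analogous identity holds for fixed $\omega'\in S'$, with $\Gamma_t^{\omega'}$ and $d\omega$ in place of $\Gamma_t^\omega$ and $d\omega'$.

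Next, pick a test function $\rho\in C_c((-1,1))$ and set $h(\omega,\omega'):=f(\omega,\omega')\rho(\omega\cdot\omega')$. Applying the first coarea identity to the inner integral of $\int_S\bigl(\int_{S'} h\, d\omega'\bigr)d\omega$ and interchanging $d\omega$ and $dt$ by Fubini produces
\[
\int_S\int_{S'} f(\omega,\omega')\rho(\omega\cdot\omega')\,d\omega'\, d\omega = \int_{-1}^{1}\frac{\rho(t)}{\sqrt{1-t^2}}\,F_1(t)\,dt,
\]
where $F_1(t)$ denotes the left-hand side of (\ref{eq:le-m:0}). The symmetric computation, now using the second coarea identity, gives the analogous representation with $F_2(t)$, the right-hand side of (\ref{eq:le-m:0}). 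Since $\int_S\int_{S'}=\int_{S'}\int_S$ by Fubini on $S\times S'$, the two double integrals coincide, hence
\[
\int_{-1}^{1}\frac{\rho(t)}{\sqrt{1-t^2}}\bigl(F_1(t)-F_2(t)\bigr)\,dt = 0\quad\text{for every }\rho\in C_c((-1,1)).
\]

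To conclude, I would observe that $F_1$ and $F_2$ are continuous on $(-1,1)$: using the explicit parametrization $\gamma(\omega)(s)=R(\omega)(\sqrt{1-t^2}\cos s,\sqrt{1-t^2}\sin s,t)$ already introduced in Section \ref{moller}, one has $F_1(t)=\sqrt{1-t^2}\int_S\int_0^{2\pi} f(\omega,\gamma(\omega)(s))\,ds\,d\omega$, which depends continuously on $t$ by the Lebesgue dominated convergence theorem, and similarly for $F_2$. Varying $\rho$ in the display above then forces $F_1\equiv F_2$ on $(-1,1)$, which is exactly (\ref{eq:le-m:0}). The main technical obstacle is the verification of the hypotheses of the coarea formula in this context, but this reduces to the positivity of $\sqrt{1-t^2}$ for $t\in(-1,1)$, which is immediate; one may alternatively bypass the coarea step entirely by computing both sides via the explicit parametrization $\gamma$ and performing a direct change of variables, at the cost of a more calculational argument.
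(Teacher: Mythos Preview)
Your proposal is correct and follows essentially the same route as the paper: both arguments use the coarea/disintegration formula $d\omega'=(1-r^2)^{-1/2}\,d\ell_r\,dr$ together with Fubini on $S\times S'$ to show that the two sides of (\ref{eq:le-m:0}) agree after integration in $t$, and then invoke continuity in $t$ to pass to the pointwise identity. The only cosmetic difference is in the final step: the paper integrates over a band $[t_0,t]$ and differentiates in $t$, whereas you pair against test functions $\rho\in C_c((-1,1))$ and apply the fundamental lemma of the calculus of variations; these are equivalent devices.
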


\begin{proof}

We apply the idea given in \cite{tervo18-up}), Lemma 5.4.
Let $-1<t_0<t<1$.
For every $\omega\in S$,
the usual surface measure $d\omega$ on $S$
disintegrates into a family of measures
$(1-r^2)^{-1/2} d\ell_r\otimes dr$, $-1<r<1$,
where $\ell_r$ is the path length measure along the curve
$\Gamma^\omega_r$. 
Let $S_{\omega,[t_0,t]}$ be the spherical zone
\[
S_{\omega,[t_0,t]}
:=\{\omega'\in S\ |\ t_0\leq \omega'\cdot\omega\leq t\}.
\]
Then
we have by Fubin's Theorem
\bea\label{dis-int}
&
\int_S \int_{S_{\omega,[t_0,t]}} f(\omega,\omega')d\omega' d\omega
= 
\int_S \int_{t_0}^{t} \int_{\Gamma_r^\omega} f(\omega,\omega')(1-r^2)^{-1/2} d\ell_r(\omega')dr d\omega\nonumber \\
&
=
\int_{t_0}^{t} (1-r^2)^{-1/2}\int_S \int_{\Gamma_r^\omega} f(\omega,\omega')d\ell_r(\omega')d\omega dr .
\eea
We  use the parametrization
\[
s\to R(\omega)\big(\sqrt{1-r^2}\cos(s),\sqrt{1-r^2}\sin(s),r\big)
\]
for $\Gamma^\omega_r$.

Furthermore, let $\chi_{\R_+}$ be the characteristic function of $\R_+$. 
Then again by the Fubini's Theorem 
\[
&
\int_S \int_{S_{\omega,[t_0,t]}} f(\omega,\omega')d\omega' d\omega
\\
&
=
\int_S \int_{S'}\chi_{\R_+}(\omega'\cdot\omega-t_0)
\chi_{\R_+}(t-\omega'\cdot\omega)
 f(\omega,\omega')d\omega' d\omega\\
&
=
\int_{S'} \int_{S}\chi_{\R_+}(\omega\cdot\omega'-t_0)
\chi_{\R_+}(t-\omega'\cdot\omega)
 f(\omega,\omega')d\omega d\omega'\\
& 
=
\int_{S'} \int_{S}\chi_{\R_+}(\omega'\cdot\omega-t_0)
\chi_{\R_+}(t-\omega\cdot\omega')
 f(\omega,\omega')d\omega d\omega'\\
& 
=
\int_{S'} \int_{S_{\omega',[t_0,t]}} f(\omega,\omega')d\omega d\omega'.
\]
Thus by (\ref{dis-int})
\be\label{ccc}
\int_{t_0}^{t} (1-r^2)^{-1/2}\int_S \int_{\Gamma_r^\omega} f(\omega,\omega')d\ell_r(\omega')d\omega dr 
=
\int_{t_0}^{t} (1-r^2)^{-1/2}\int_{S'} \int_{\Gamma_r^{\omega'}} f(\omega,\omega')d\ell_r(\omega)d\omega' dr.  
\ee
Noting that the function 
\[
r\to \int_S \int_{\Gamma_r^\omega} f(\omega,\omega')d\ell_r(\omega')d\omega 
=\sqrt{1-r^2}\int_S\int_0^{2\pi}f\big(\omega,R(\omega)(\sqrt{1-r^2}\cos(s),\sqrt{1-r^2}\sin(s),r)\big)dr d\omega
\]
is  continuous 
we get the assertion by taking (in formula (\ref{ccc})) the derivative with respect to $t$ on each side. 

\end{proof}

\begin{corollary}\label{sp-ch-le}

For $\psi,\ v\in C(\ol G\times S\times I)$ 
\be\label{vf-3}
\int_S\int_0^{2\pi}\psi(x,\gamma(E',E,\omega)(s),E')v(x,\omega,E)ds d\omega
=
\int_{S'}\int_0^{2\pi}\psi(x,\omega',E')v(x,\gamma(E',E,\omega')(s),E)ds d\omega'
\ee
where $\gamma(E',E,\omega)$ is as in section \ref{moller}.
\end{corollary}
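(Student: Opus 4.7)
The plan is to reduce the corollary directly to Lemma \ref{le-m:0} by re-expressing the inner $s$-integrals as path integrals along the circles $\Gamma(E',E,\omega) = \Gamma_{\mu(E',E)}^\omega$. Throughout, fix $x$, $E'$ and $E$, and set $\mu := \mu(E',E)$.

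First I would observe that the explicit parametrization
\[
\gamma(E',E,\omega)(s) = R(\omega)\bigl(\sqrt{1-\mu^2}\cos(s),\sqrt{1-\mu^2}\sin(s),\mu\bigr),\quad s\in[0,2\pi],
\]
traces the circle $\Gamma_\mu^\omega = \{\omega'\in S\ |\ \omega'\cdot\omega=\mu\}$ with constant speed $\sqrt{1-\mu^2}$ (because $R(\omega)$ is a rotation). Hence for any continuous $h$ on $S$,
\[
\int_{\Gamma_\mu^\omega} h(\omega')\, d\ell(\omega') = \sqrt{1-\mu^2}\int_0^{2\pi} h\bigl(\gamma(E',E,\omega)(s)\bigr)\, ds.
\]
The same identity holds with the roles of $\omega$ and $\omega'$ swapped, since $\Gamma_\mu^{\omega'} = \{\omega\in S\ |\ \omega\cdot\omega'=\mu\}$ is parametrized by $s\mapsto \gamma(E',E,\omega')(s)$ with the same speed $\sqrt{1-\mu^2}$.

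Using these identities, I would rewrite the left-hand side of \eqref{vf-3} as
\[
\int_S\int_0^{2\pi}\psi(x,\gamma(E',E,\omega)(s),E')v(x,\omega,E)\,ds\, d\omega
= \frac{1}{\sqrt{1-\mu^2}}\int_S\int_{\Gamma_\mu^\omega}\psi(x,\omega',E')v(x,\omega,E)\, d\ell(\omega')\, d\omega.
\]
Now I apply Lemma \ref{le-m:0} with $t=\mu$ and $f(\omega,\omega') := \psi(x,\omega',E')v(x,\omega,E)$ (which is continuous on $S\times S'$ since $\psi,v\in C(\ol G\times S\times I)$). This gives
\[
\frac{1}{\sqrt{1-\mu^2}}\int_{S'}\int_{\Gamma_\mu^{\omega'}}\psi(x,\omega',E')v(x,\omega,E)\, d\ell(\omega)\, d\omega'.
\]
Converting the inner path integral back to the $s$-parametrization via $\gamma(E',E,\omega')$, the factor $\sqrt{1-\mu^2}$ cancels and I recover the right-hand side of \eqref{vf-3}.

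The only nontrivial point is the symmetry observation that the same parametrization formula, with $\omega'$ in place of $\omega$, furnishes a constant-speed parametrization of $\Gamma_\mu^{\omega'}$; this follows because $\mu(E',E)$ depends only on $(E',E)$ and not on the base point, so the geometric object on each side is a latitude circle of the same polar angle, and the $R(\cdot)$ prescription produces such a parametrization uniformly. No substantial obstacle is expected beyond this routine bookkeeping of the $\sqrt{1-\mu^2}$ factors.
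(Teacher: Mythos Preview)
Your proposal is correct and follows essentially the same route as the paper: define $f(\omega,\omega')=\psi(x,\omega',E')v(x,\omega,E)$, convert the $s$-integrals to path integrals over $\Gamma_\mu^\omega$ using the constant speed $\sqrt{1-\mu^2}$, apply Lemma~\ref{le-m:0}, and convert back. The paper's proof is identical in structure and detail.
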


\begin{proof}
Let for fixed $x, \ E',\ E$ 
\[
f(\omega,\omega'):=\psi(x,\omega',E')v(x,\omega,E).
\]
Recall that
$\gamma(E',E,\omega)$ is a parametrization of the curve
$\Gamma_{\mu}^\omega:=\{\omega'\in S\ |\ \omega\cdot\omega'=\mu(E',E)\}$ and
note that $\n{\gamma(E',E,\omega)'(s)}=\sqrt{1-\mu(E',E)^2}$.
Hence we have by Lemma \ref{le-m:0} 
\bea\label{vf-3-a}
&
\int_S\int_0^{2\pi}\psi(x,\gamma(E',E,\omega)(s),E')v(x,\omega,E)ds d\omega
=
{1\over{\sqrt{1-\mu(E',E)^2}}}\int_S\int_{\Gamma_{\mu}^\omega} f(\omega,\omega') d\ell(\omega') d\omega\nonumber\\
&
=
{1\over{\sqrt{1-\mu(E',E)^2}}}\int_{S'}\int_{\Gamma_{\mu}^{\omega'}} f(\omega,\omega') d\ell(\omega)d\omega'
\nonumber\\
&
=
\int_{S'}\int_{0}^{2\pi}\psi(x,\omega',E')v(x,\gamma(E',E,\omega')(s),E)ds d\omega',
\eea
as desired.

\end{proof}

Finally we prove the following result.

\begin{theorem}\label{ad-le-6}
Let $\psi\in C(\ol G,C^2(I,C^3(S)))$. 
Then for fixed $x,\ \omega,\ E$ the mapping 
\[
F(E'):= \int_0^{2\pi}\psi(x,\gamma(E',E,\omega)(s),E')ds
\]
is differentiable and 
\begin{enumerate}
\item 
for $E'\not=E$
\bea\label{var-for-13-a}
F'(E')&={\partial\over{\partial E'}}\big(\int_0^{2\pi}\psi(x,\gamma(E',E,\omega)(s),E')ds\big)\nonumber\\
&
=
\int_0^{2\pi}\la (\nabla_{S}\psi)(x,\gamma(E',E,\omega)(s),E'),
{\p {\gamma}{E'}}(E',E,\omega)(s)\ra ds
\nonumber\\
&
+
\int_0^{2\pi}{\p {\psi}{E}}(x,\gamma(E',E,\omega)(s),E')ds
\eea
\item 
for $E'=E$
\bea\label{var-for-13-b}
&
F'(E)={\partial\over{\partial E'}}\big(\int_0^{2\pi}\psi(x,\gamma(E',E,\omega)(s),E')ds\big)_{\Big|E'=E}
=
2\pi\ (\partial_{E'}\mu)(E,E)(\omega\cdot\nabla_{S}\psi)(x,\omega,E)\nonumber\\
&
+
\sum_{|\alpha|\leq 2}a_{\alpha}(E,\omega)(\partial_{\omega}^\alpha\psi)(x,\omega,E)
+
2\pi {\p {\psi}{E}}(x,\omega,E).
\eea
Here (below we denote $(\psi\circ H_\omega)(x,\xi,E)=\psi(x,H_\omega(\xi),E)$)
\bea\label{aa}
&
\sum_{|\alpha|\leq 2}a_\alpha (\omega,E)
(\partial_{\omega}^\alpha\psi)(x,\omega,E)\nonumber\\
&
:=\lim_{E'\to E}\int_0^{2\pi}\sum_{j=1}^2\partial_j \big(\la (\nabla_S\psi\circ  H_\omega)(x,.,E'),\eta(E',E,\omega,s)\ra\big)(0) \xi_j((E',E,\omega,s) ds.
\eea
where $\partial_j=\partial_{\xi_j}$ and where the limit is given in Lemma \ref{aij} below.
\end{enumerate}
Above $\xi(E',E,\omega,s):=J(\zeta(E',E,\omega,s)),\ \zeta(E',E,\omega,s)=H_\omega^{-1}(\gamma(E',E,\omega)(s))$ (recall $J$ and $\zeta$ from section \ref{taylor-S}) and $\eta(E',E,\omega,s):={\p {\gamma}{E'}}(E',E,\omega)(s)$.

\end{theorem}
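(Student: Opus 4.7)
The plan is to treat the two cases separately, obtaining case (2) by passing to the limit $E' \to E$ in the formula of case (1).

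For case (1) with $E' \neq E$, note that $\mu(E',E) < 1$, so $\gamma(E',E,\omega)(\cdot)$ depends smoothly on $E'$, and the integrand $\psi(x,\gamma(E',E,\omega)(s),E')$ is jointly $C^1$ in $(E',s)$. The standard Leibniz rule therefore justifies differentiation under the integral sign. For the explicit form I apply the chain rule to the composition. Since $\gamma(E',E,\omega)(s) \in S$, differentiating $|\gamma|^2 = 1$ gives $\partial_{E'}\gamma \in T_\gamma S$. Consequently the ambient gradient $\nabla\psi$ and the surface gradient $\nabla_S\psi$ pair identically with $\partial_{E'}\gamma$ (by \eqref{gradS} their difference is normal to $T_\gamma S$), which together with the term from the third slot of $\psi$ yields \eqref{var-for-13-a}. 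The inner product is interpreted via Remark \ref{inner-pr}.

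For case (2), I pass to the limit $E' \to E$ in \eqref{var-for-13-a}. The second integral is unproblematic: $\gamma(E',E,\omega)(s) \to \omega$ uniformly in $s$ and $\partial_E\psi$ is continuous, so $\int_0^{2\pi}\partial_E\psi(x,\gamma(E',E,\omega)(s),E')ds \to 2\pi\,\partial_E\psi(x,\omega,E)$. The first integral is delicate because the tangential components of $\eta(E',E,\omega,s) = \partial_{E'}\gamma(E',E,\omega)(s)$ blow up like $(1-\mu(E',E))^{-1/2}$ as $E' \to E$. To handle this I Taylor-expand $(\nabla_S\psi\circ H_\omega)(x,\cdot,E')$ at $\xi = 0$ to first order with a quadratic remainder, with $\xi = \xi(E',E,\omega,s) = J(H_\omega^{-1}(\gamma(E',E,\omega)(s)))$. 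Substitution splits the first integral into three pieces.

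For the zeroth-order piece, pulling $(\nabla_S\psi)(x,\omega,E')$ out leaves $\int_0^{2\pi}\eta(E',E,\omega,s)ds$; a direct calculation from $\gamma(E',E,\omega)(s) = R(\omega)(\sqrt{1-\mu^2}\cos s,\sqrt{1-\mu^2}\sin s,\mu)$, together with $\int_0^{2\pi}\cos s\, ds = \int_0^{2\pi}\sin s\, ds = 0$ and $R(\omega)(0,0,1)=\omega$, gives $\int_0^{2\pi}\eta\, ds = 2\pi(\partial_{E'}\mu(E',E))\omega$. Taking $E' \to E$ produces the formal summand $2\pi(\partial_{E'}\mu)(E,E)(\omega\cdot\nabla_S\psi)(x,\omega,E)$, which in fact vanishes by \eqref{nab0} but is retained as a bookkeeping term. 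Since $\eta$ does not depend on $\xi$, the derivative $\partial_j$ in \eqref{aa} only acts on $\nabla_S\psi\circ H_\omega$, so the first-order Taylor piece coincides with the integrand in \eqref{aa}; by definition, and citing Lemma \ref{aij}, its limit equals $\sum_{|\alpha|\le 2}a_\alpha(E,\omega)(\partial_\omega^\alpha\psi)(x,\omega,E)$. The remainder is bounded by $C\int_0^{2\pi}|\xi|^2|\eta|ds$; using $|\xi| = \ol{\rm arc}\cos(\mu) \sim \sqrt{2(1-\mu)}$ and the blow-up estimate of $|\eta|$ gives $|\xi| = O((E'-E)^{1/2})$ while $|\eta| = O((E'-E)^{-1/2})$, so the piece is $O(\sqrt{E'-E}) \to 0$. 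Adding all contributions yields \eqref{var-for-13-b}.

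The main obstacle will be the quantitative balance in this last step: one must show uniformly in $s$ that the quadratic vanishing of $|\xi|$ dominates the inverse square-root blow-up of $|\eta|$, and that the first-order Taylor piece in fact admits the limit \eqref{aa} rather than only a subsequential accumulation. Both matters reduce to the explicit asymptotic $1-\mu(E',E) = (E'-E)/(E(E+2)) + O((E'-E)^2)$, which follows from the definition of $\mu(E',E)$, and are the content of the forthcoming Lemma \ref{aij}.
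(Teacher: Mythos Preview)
Your proposal is correct and follows essentially the same route as the paper: chain rule for $E'\neq E$, then a first-order Taylor expansion of $\nabla_S\psi$ about $\omega$ (via the pull-back $H_\omega$) to split the singular integral into a zeroth-order piece giving $2\pi(\partial_{E'}\mu)(E,E)(\omega\cdot\nabla_S\psi)$, the first-order piece defining the $a_\alpha$ term, and a quadratic remainder controlled by $|\xi|^2|\eta|=O((E'-E)^{1/2})$. One small point you skip that the paper makes explicit: computing $\lim_{E'\to E}F'(E')$ does not by itself show that $F$ is differentiable at $E'=E$; the paper closes this by noting that $F$ is continuous and invoking the standard L'H\^opital/mean-value consequence that continuity plus existence of $\lim F'$ forces $F'(E)$ to exist and equal that limit.
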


\begin{proof}
A.
Note that the mapping $E'\to \gamma(E',E,\omega)(s)$ is a curve on $S$ 
and so
${\p {\gamma}{E'}}(E',E,\omega)(s)\in T_{\gamma(E',E,\omega)(s)}(S)$.
Therefore formula (\ref{var-for-13-a}) follows from the chain rule on manifolds.

B.
Writing  $\mu=\mu(E',E),\ \partial_{E'}\mu={\p {\mu}{E'}}(E',E)$ we find that
\be\label{pg}
{\p {\gamma}{E'}}(E',E,\omega)(s)=
R(\omega)\big(-{{(\partial_{E'}\mu)\mu}\over{\sqrt{1-\mu^2}}}\cos(s),
-{{(\partial_{E'}\mu)\mu}\over{\sqrt{1-\mu^2}}}\sin(s),\partial_{E'}\mu\big)
\ee
The  term
\[
\int_0^{2\pi}\la (\nabla_{S}\psi)(x,\gamma(E',E,\omega)(s),E'),
{\p {\gamma}{E'}}(E',E,\omega)(s)\ra ds
\] 
in (\ref{var-for-13-a}) requires further study for $E'=E$ since the partial derivative ${\p {\gamma}{E'}}(E',E,\omega)(s)$ does not exist for $E'=E$.

Because the mapping $F$ is continuous it suffices (by the consequence of L'Hospital's rule) to show that the limit $\lim_{E'\to E}F'(E')$ exists
and that
\bea\label{limF'}
&
\lim_{E'\to E}F'(E')
=2\pi\ (\partial_{E'}\mu)(E,E)(\omega\cdot\nabla_{S}\psi)(x,\omega,E)\nonumber\\
&
+
\sum_{|\alpha|\leq 2}a_{\alpha}(E,\omega)(\partial_{\omega}^\alpha\psi)(x,\omega,E)+2\pi {\p {\psi}{E}}(x,\omega,E).
\eea
Since (note that  $\gamma(E,E,\omega)(s)=\omega$)
\[
\lim_{E'\to  E}\int_0^{2\pi}{\p {\psi}{E}}(x,\gamma(E',E,\omega)(s),E')ds
=2\pi {\p {\psi}{E}}(x,\omega,E)
\]
we have to prove that
\bea\label{var-for-14}
&
\lim_{E'\to E}\Big(
\int_0^{2\pi}\la (\nabla_{S}\psi)(x,\gamma(E',E,\omega)(s),E'),
{\p {\gamma}{E'}}(E',E,\omega)(s)\ra ds\Big)\nonumber\\
&
=
2\pi\ (\partial_{E'}\mu)(E,E)(\omega\cdot\nabla_{S}\psi)(x,\omega,E)
+
\sum_{|\alpha|\leq 2}a_{\alpha}(E,\omega)(\partial_{\omega}^\alpha\psi)(x,\omega,E)
.
\eea
We proceed as follows.

In virtue of the Taylor's formula 
(recall $\eta={\p {\gamma}{E'}}(E',E,\omega)(s)$)
\bea\label{ex-9}
&
\la (\nabla_{S}\psi)(x,\gamma(E',E,\omega)(s),E'),{\p {\gamma}{E'}}(E',E,\omega)(s)\ra
=\la (\nabla_S\psi)(x,\omega,E'),{\p {\gamma}{E'}}(E',E,\omega)(s)\ra\nonumber\\
&
+
\sum_{j=1}^2 \partial_j\big(\la (\nabla_S\psi\circ  H_\omega)(x,.,E'),\eta(E',E,\omega,s)\ra\big)(0) \xi_j(E',E,\omega,s) 
\nonumber\\
&
+
(R_\omega\psi(x,\cdot,E'))(\omega')\Big(\zeta(E',E,\omega, s),\zeta(E',E,\omega,\eta(E',E,\omega,s)\Big)
\eea
where 
the residual is 
\bea\label{t-residual-a}
&
R_\omega(\psi(x,.,E))(\omega')(\zeta(E',E,\omega, s),\zeta(E',E,\omega, s),\eta(E',E,\omega,s))
\nonumber\\
&
=
\sum_{|\alpha|=2}{{|\alpha|}\over{\alpha !}}\int_0^1(1-t)\partial_\xi^\alpha
\big(\la (\nabla_S\psi\circ H_\omega)(x,.,E'),\eta(E',E,\omega,s)\ra\big)(t\xi) dt \cdot \xi^\alpha. 
\eea

We have for $E'\not=E$
\bea\label{ex-10}
&
\int_{0}^{2\pi}
\la (\nabla_S\psi)(x,\omega,E'),
{\p {\gamma}{E'}}(E',E,\omega)(s)\ra ds
=
\la (\nabla_S\psi)(x,\omega,E'),
\int_{0}^{2\pi}{\p {\gamma}{E'}}(E',E,\omega)(s)ds\ra\nonumber\\
&
=2\pi\ (\partial_{E'}\mu)(E',E)(\omega\cdot\nabla_S\psi)(x,\omega,E')
\eea
where we recalled that $R(\omega)e_3=\omega$,
and so 
\bea\label{lim-1}
&
\lim_{E'\to E}\Big(
\int_{0}^{2\pi}
\la (\nabla_S\psi)(x,\omega,E'),
{\p {\gamma}{E'}}(E',E,\omega)(s)\ra ds\Big)
\nonumber\\
&
=2\pi\ (\partial_{E'}\mu)(E,E)(\omega\cdot\nabla_S\psi)(x,\omega,E).
\eea

For the residual we have
\be\label{lim-3}
\lim_{E'\to E}\int_0^{2\pi}
(R_\omega\psi(x,\cdot,E'))(\omega')\Big(\zeta(E',E,\omega,s) ,\zeta(E',E,\omega,s),
\eta(E',E,\omega,s)\Big)ds=0
\ee
This is seen as follows. By (\ref{e7}) 
\be
\n{\zeta(E',E,\omega, s)}\leq C\n{\gamma(E',E,\omega)(s)-\omega}.
\ee
Letting $\gamma_0(E',E)(s):=(\sqrt{1-\mu^2}\cos(s),\sqrt{1-\mu^2}\sin(s),\mu)$
and recalling that $\omega=R(\omega)e_3$ we obtain
\bea\label{ex-21}
&
\n{\gamma(E',E,\omega)(s)-\omega}^2
=\n{R(\omega)\gamma_0(E',E)(s)-\omega}^2=\n{\gamma_0(E',E)(s)-R(\omega)^{-1}\omega}^2
\nonumber\\
&
=
\n{\gamma_0(E',E)(s)-e_3}^2=2(1-\mu(E',E))
\eea
and then
we get
\bea\label{ex-12-b}
&
\Big|(R_\omega\psi(x,\cdot,E')(\omega')\Big(\zeta(E',E,\omega,s) ,\zeta(E',E,\omega,s),
\eta(E',E,\omega,s)\Big)|
\nonumber\\
&
\leq 
\sum_{|\alpha|=2}{{|\alpha|}\over{\alpha !}}\int_0^1(1-t)\n{\partial_\xi^\alpha
(\nabla_S\psi\circ H_\omega)(x,t\xi,E')}\n{\eta(E',E,\omega,s)} \n{\xi}^{|\alpha|} 
\nonumber\\
&
\leq
C_1\n{\psi}_{C(\ol G\times I,C^3(S))}\n{\zeta(E',E,\omega,s)}^2
\n{{\p {\gamma}{E'}}(E',E,\omega)(s)}\nonumber\\
&
\leq 
C_1C^2\n{\psi}_{C(\ol G\times I,C^3(S))}\n{\gamma(E',E,\omega)(s)-\omega}^2
\n{{\p {\gamma}{E'}}(E',E,\omega)(s)}\nonumber\\
&
\leq 
2C_1C^2\n{\psi}_{C(\ol G\times I,C^3(S))}(1-\mu){{|\partial_{E'}\mu|}\over{\sqrt{1-\mu^2}}}
=2C_1C^2\n{\psi}_{C(\ol G\times I,C^3(S))}|\partial_{E'}\mu|{{\sqrt{1-\mu}}\over{\sqrt{1+\mu}}}.
\eea
In (\ref{ex-12-b}) the right hand side convergences to zero since $\mu\to 1$ as $E'\to E$.
Hence the assertion (\ref{var-for-13-b}) follows by combining (\ref{ex-9}), (\ref{lim-1}), (\ref{lim-3}).
 
\end{proof}

\begin{lemma}\label{aij}
Let $\xi=J(\zeta),\ \zeta:=H_\omega^{-1}(\gamma(E',E,\omega)(s))\in T_\omega(S)$ and $\eta={\p \gamma{E'}}(E',E,\omega)(s)$. Then
\bea\label{aij-1}
&
\lim_{E'\to E}\int_0^{2\pi}\sum_{j=1}^2\partial_j\big(\la  (\nabla_S\psi\circ  H_\omega)(x,.,E'),\eta(E',E,\omega,s)\ra\big)(0) \xi_j((E',E,\omega,s) ds
\nonumber\\
&
=-(\partial_{E'}\mu)(E,E)\sum_{j=1}^2\int_0^{2\pi}
 \partial_j\big(\la (\nabla_S\psi\circ  H_\omega)(x,.,E),R(\omega)\big(\cos(s),\sin(s),0\big)\ra\big)(0)\nonumber\\
&
\cdot
\la R(\omega)\big(\cos(s),\sin(s),0\big)
,\ol\Omega_j\ra  ds.
\eea
\end{lemma}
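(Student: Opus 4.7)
The plan is to make explicit both $\xi_j$ and $\eta$ in terms of the angular cosine $\mu=\mu(E',E)$ and the unit tangent curve $v_s:=R(\omega)(\cos s,\sin s,0)$, and then to identify the cancellation $\arccos(\mu)/\sqrt{1-\mu^2}\to 1$ (recorded in Example \ref{ex-map}) as the only source of a surviving term in the limit.

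Taking $H_\omega=\exp_\omega$ and applying formula (\ref{e3}) with $\omega'=\gamma(E',E,\omega)(s)$, one computes $\la\omega',\omega\ra=\mu$ and $\omega'-\mu\omega=\sqrt{1-\mu^2}\,v_s$, whence
\[
\zeta=\exp_\omega^{-1}(\omega')=\arccos(\mu)\,v_s,\qquad \xi_j=\la\zeta,\ol\Omega_j\ra=\arccos(\mu)\,\la v_s,\ol\Omega_j\ra.
\]
In parallel, formula (\ref{pg}) rewrites as
\[
\eta(E',E,\omega,s)=(\partial_{E'}\mu)\,\omega-\frac{(\partial_{E'}\mu)\,\mu}{\sqrt{1-\mu^2}}\,v_s,
\]
which separates a bounded direction $\omega$ from the direction $v_s$ whose coefficient blows up as $\mu\to 1$. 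Since $\eta$ does not depend on $\xi$, bilinearity of the inner product gives
\begin{align*}
&\sum_{j=1}^2\partial_j\big(\la(\nabla_S\psi\circ H_\omega)(x,\cdot,E'),\eta\ra\big)(0)\,\xi_j\\
&\quad=(\partial_{E'}\mu)\arccos(\mu)\sum_{j=1}^2\la v_s,\ol\Omega_j\ra\,\partial_j\la(\nabla_S\psi\circ H_\omega)(x,\cdot,E'),\omega\ra(0)\\
&\quad\quad-\frac{(\partial_{E'}\mu)\,\mu\,\arccos(\mu)}{\sqrt{1-\mu^2}}\sum_{j=1}^2\la v_s,\ol\Omega_j\ra\,\partial_j\la(\nabla_S\psi\circ H_\omega)(x,\cdot,E'),v_s\ra(0).
\end{align*}

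It then remains to integrate over $s\in[0,2\pi]$ and let $E'\to E$. Because $\psi\in C(\ol G,C^2(I,C^3(S)))$ and $R(\omega),\ \ol\Omega_j(\omega)$ are smooth in $\omega$, both integrands above are bounded uniformly in $s$ and $E'$ near $E$ and depend continuously on $E'$ uniformly in $s$; hence limit and integral may be exchanged. The first line vanishes because $\arccos(\mu)\to 0$ while its integrand remains bounded. In the second line $\mu(E,E)=1$ and $\mu\arccos(\mu)/\sqrt{1-\mu^2}\to 1$ by the L'Hospital computation preceding (\ref{e5}); the surviving contribution is therefore exactly the right-hand side of (\ref{aij-1}).

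The substantive point is recognising and controlling the cancellation between the $\arccos(\mu)$ factor carried by $\xi_j$ and the $1/\sqrt{1-\mu^2}$ blow-up in the $v_s$-component of $\eta$; once that is in hand, everything else reduces to linearity of the inner product and uniform continuity on the compact interval $[0,2\pi]$.
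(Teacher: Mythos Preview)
Your proof is correct and follows essentially the same route as the paper's: explicit computation of $\zeta$ via (\ref{e3}), the explicit form (\ref{pg}) for $\eta$, and the cancellation $\arccos(\mu)/\sqrt{1-\mu^2}\to 1$. Your decomposition $\eta=(\partial_{E'}\mu)\omega-\frac{(\partial_{E'}\mu)\mu}{\sqrt{1-\mu^2}}v_s$ and the resulting split into a vanishing $\omega$-part and a surviving $v_s$-part is a slightly cleaner way of organising the same computation that the paper carries out in one block at (\ref{aij-4}).
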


\begin{proof}
Recall that $H_\omega=\exp_\omega$ is given in Example \ref{ex-map}.
Denote $\omega':=\gamma(E',E,\omega)(s)$ and $\mu=\mu(E',E)$. 
In due to (\ref{e3}) we find that
\bea\label{aij-2}
&
\zeta(E',E,\omega,s)=\exp_\omega^{-1}(\omega')
=
{{{\rm \ol{arc}cos}(\la\omega',\omega\ra)}\over{
\sqrt{1-\la\omega',\omega\ra^2}}}(\omega'-\la\omega',\omega\ra\omega)
\nonumber\\
&
=
{{{\rm \ol{arc}cos}(\la\gamma(E',E,\omega)(s),\omega\ra}\over{
\sqrt{1-\la\gamma(E',E,\omega)(s),\omega\ra^2}}}(\gamma(E',E,\omega)(s)-\la\gamma(E',E,\omega)(s),\omega\ra\omega)
.
\eea
Furthermore, we  find that
\[
\la\gamma(E',E,\omega)(s),\omega\ra &=\la R(\omega)\gamma_0(E',E)(s),\omega\ra
=\la \gamma_0(E',E)(s),R(\omega)^*\omega\ra\\
&
=\la \gamma_0(E',E)(s),e_3\ra=\mu(E',E)
\]
where we noticed that $R(\omega)^*\omega=R(\omega)^{-1}\omega=e_3$.
Hence
\bea\label{aij-3}
&
\zeta(E',E,\omega,s) 
=
{{{\rm \ol{arc}cos}(\mu)}\over{
\sqrt{1-\mu^2}}}(\gamma(E',E,\omega)(s)-\mu\omega)
\nonumber\\
&
=
{{{\rm \ol{arc}cos}(\mu)}\over{
\sqrt{1-\mu^2}}}R(\omega)(\gamma_0(E',E)(s)-(0,0,\mu))
\eea
since $R(\omega)(0,0,\mu)=\mu\omega$ (recall $R(\omega)$ from section \ref{moller}).

In virtue of Example \ref{ex-map}
\[
\lim_{E'\to E}{{{\rm \ol{arc}cos}(\mu(E',E))}\over{
\sqrt{1-\mu(E',E)^2}}}=1.
\]
In addition,
\[
R(\omega)(\gamma_0(E',E)(s)-(0,0,\mu))=R(\omega)\big(\sqrt{1-\mu^2}\cos(s),\sqrt{1-\mu^2}\sin(s),0\big).
\]
Hence (because $\mu(E,E)=1$)
\bea\label{aij-4}
&
\lim_{E'\to E}\int_0^{2\pi}\sum_{j=1}^2\partial_j\big(\la  (\nabla_S\psi\circ  H_\omega)(x,.,E'),\eta(E',E,\omega,s)\ra\big)(0) \xi_j((E',E,\omega,s) ds
\nonumber\\
&
=
\lim_{E'\to E}\sum_{j=1}^2\int_0^{2\pi}\partial_j\big( \la (\nabla_S\psi\circ  H_\omega)(x,.,E'),
R(\omega)\big(-{{(\partial_{E'}\mu)\mu}\over{\sqrt{1-\mu^2}}}\cos(s),
-{{(\partial_{E'}\mu)\mu}\over{\sqrt{1-\mu^2}}}\sin(s),\partial_{E'}\mu\big)
\ra  \big)(0)
\nonumber\\
&
\cdot
\Big(
R(\omega)\big(\sqrt{1-\mu^2}\cos(s),\sqrt{1-\mu^2}\sin(s),0\big)
\Big)_j ds
=
\lim_{E'\to E}\sum_{j=1}^2\int_0^{2\pi} \nonumber\\
&
\partial_j\big(\la (\nabla_S\psi\circ  H_\omega)(x,.,E'),
R(\omega)\big(-(\partial_{E'}\mu)\mu\ \cos(s),
-(\partial_{E'}\mu)\mu\ \sin(s),\sqrt{1-\mu^2}\partial_{E'}\mu\big)
\ra\big)(0)  \nonumber\\
&
\cdot
\Big(
R(\omega)\big(\cos(s),\sin(s),0\big)
\Big)_j ds\nonumber\\
&
=-(\partial_{E'}\mu)(E,E)\sum_{j=1}^2\int_0^{2\pi}
 \partial_j\big(\la (\nabla_S\psi\circ  H_\omega)(x,.,E),
R(\omega)\big(\cos(s),\sin(s),0\big)
\ra \big)(0) \nonumber\\
&
\cdot
\Big(R(\omega)\big(\cos(s),\sin(s),0\big)
\Big)_j ds
\eea
Since
\[
\la R(\omega)\big(\cos(s),\sin(s),0\big),\omega\ra=
\la \big(\cos(s),\sin(s),0\big),R^*(\omega)\omega\ra
=\la \big(\cos(s),\sin(s),0\big),e_3\ra =0
\]
the vector $R(\omega)\big(\cos(s),\sin(s),0\big)$ lies in $T_\omega(S)$ (as it should be).
In addition (since $\ol\Omega_1,\ \ol\Omega_2$ are orthogonal),
\[
\Big(R(\omega)\big(\cos(s),\sin(s),0\big)
\Big)_j=\la R(\omega)\big(\cos(s),\sin(s),0\big),\ol\Omega_j\ra,\ j=1,2.
\]
Hence  we conclude the assertion from
(\ref{aij-4}).
\end{proof}

\begin{remark}\label{rem-aij}

A. The result of the limit in (\ref{aij-1}) can be further computed. We notice that the limit (\ref{var-for-14}) can be computed in the coordinate free way (by personal communication with Dr. Petri Kokkonen) but our below treatises are based on local ones.

Let
\[
b_j(\omega):=\int_0^{2\pi}
\la R(\omega)\big(\cos(s),\sin(s),0\big)
,\ol\Omega_j\ra R(\omega)\big(\cos(s),\sin(s),0\big)ds.
\]
By using the matrix partition $R(\omega)=\qmatrix{\ol\Omega_2&\ol\Omega_1&\omega}$ we have
\bea\label{pd-term-a}
&
\la R(\omega)\big(\cos(s),\sin(s),0\big)
,\ol\Omega_j\ra =\la \big(\cos(s),\sin(s),0\big)
,R^*(\omega)\ol\Omega_j\ra\nonumber\\
&
=\la \big(\cos(s),\sin(s),0\big)
,\qmatrix{\ol\Omega_2\cr\ol\Omega_1\cr\omega}\ol\Omega_j\ra
=
\la \big(\cos(s),\sin(s),0\big)
,\big(\la \ol\Omega_2,\ol\Omega_j\ra,\la\ol\Omega_1,\ol\Omega_j\ra,0\big)\ra.
\eea
Hence we find that
\[
\la R(\omega)\big(\cos(s),\sin(s),0\big)
,\ol\Omega_1\ra=\sin(s),
\]
\[
\la R(\omega)\big(\cos(s),\sin(s),0\big)
,\ol\Omega_2\ra =\cos(s)
\] 
and so
\[
b_1(\omega)=
 R(\omega)\big(\int_0^{2\pi}\sin(s)\cos(s)ds,\int_0^{2\pi}\sin^2(s)ds,0\big)
=\pi R(\omega)e_2=\pi\ol\Omega_1(\omega).
\]
Similarly $b_2(\omega)=\pi\ol\Omega_2(\omega)$. 
Hence
\bea\label{r-1}
&
-(\partial_{E'}\mu)(E,E)\sum_{j=1}^2\int_0^{2\pi}
 \partial_j\big(\la (\nabla_S\psi\circ  H_\omega)(x,.,E),R(\omega)\big(\cos(s),\sin(s),0\big)\ra\big)(0)\nonumber\\
&
\cdot
\la R(\omega)\big(\cos(s),\sin(s),0\big)
,\ol\Omega_j\ra  ds\nonumber
\\
&
=
-\pi\ 
(\partial_{E'}\mu)(E,E)\sum_{j=1}^2
 \partial_j\big(\la (\nabla_S\psi\circ  H_\omega)(x,.,E)
,\ol\Omega_j\ra  
\eea

We compute the result of (\ref{r-1}) in spherical coordinates.
Recall that the spherical coordinates for $S$ are given by the parametrization
\[
g(\phi,\theta)=(\cos\phi\sin\theta,\sin\phi\sin\theta,\cos\theta)
\]
and that
\[
{\p f{\omega_1}}(\omega)={\p {(f\circ g)}{\phi}}(\phi,\theta)_{| g(\phi,\theta)=\omega},\ 
{\p f{\omega_2}}(\omega)={\p {(f\circ g)}{\theta}}(\phi,\theta)_{| g(\phi,\theta)=\omega}.
\]
By a routine computation we see that 
\[
\la\ol\Omega_1(\omega),\partial_{\omega_1}\ol\Omega_1(\omega)\ra=
\la\ol\Omega_2(\omega),\partial_{\omega_2}\ol\Omega_2(\omega)\ra
=
\la\ol\Omega_1(\omega),\partial_{\omega_2}\ol\Omega_2(\omega)\ra=0,
\]
\[
\la\ol\Omega_2(\omega),\partial_{\omega_1}\ol\Omega_1(\omega)\ra=-{1\over{\sqrt{\omega_1^2+\omega_2^2}}}\omega_3.
\]

The elements $G_{ij}=\la {\partial\over{\partial \omega_i}},{\partial\over{\partial \omega_j}}\ra$ of the Riemannian metric tensor are
\[
G_{11}=\omega_1^2+\omega_2^2,\ G_{12}=G_{21}=0,\ G_{22}=1.
\]
Hence the
Laplace-Beltrami operator in these coordinates is (here $|G|={\rm det}(G)$)
\[
&
\Delta_S\psi ={1\over{\sqrt{|G|}}}
\sum_{i=1}^2\sum_{j=1}^2\partial_{\omega_i}\Big(\sqrt{|G|}(G^{-1})_{ij}\partial_{\omega_j}\psi\Big)\\
&
=
{1\over{\omega_1^2+\omega_2^2}}\partial_{\omega_1}^2\psi
+\partial_{\omega_2}^2\psi+{{\omega_3}\over{\sqrt{\omega_1^2+\omega_2^2}}}\partial_{\omega_2}\psi.
\]
Moreover, note that
\[
\ol\Omega_1\sim {1\over{\sqrt{\omega_1^2+\omega_2^2}}}{\partial\over{\partial\omega_1}},\
\ol\Omega_2\sim {\partial\over{\partial\omega_2}}
\]
and
\[
\nabla_S\psi=\la\nabla_S\psi,\ol\Omega_1\ra \ol\Omega_1+\la\nabla_S\psi,\ol\Omega_2\ra \ol\Omega_2.
\]

Consider the derivatives
\[
\partial_j\big(\la (\nabla_S\psi\circ H_\omega)(x,.,E),\ol\Omega_j(\omega)\ra\big)(0),\ j=1,2.
\]
Since $(\partial_jH_\omega)(0)=\ol\Omega_j(\omega)$
we find that
\be\label{ex-par}
\partial_j(f\circ H_\omega)(0)=\la(\nabla_S f)(\omega),\ol\Omega_j(\omega)\ra
,\ j=1,2
\ee
that is,  
\[
&
\partial_1(f\circ H_\omega)(0)={1\over{\sqrt{\omega_1^2+\omega_2^2}}}
\la(\nabla_S f)(\omega),{\partial\over{\partial\omega_1}}\ra
={1\over{\sqrt{\omega_1^2+\omega_2^2}}}(\partial_{\omega_1}f)(\omega),\\
&
\partial_2(f\circ H_\omega)(0)=\la(\nabla_S f)(\omega),{\partial\over{\partial\omega_2}}\ra
=(\partial_{\omega_2}f)(\omega).
\]

Hence we see by the above formulas (somewhat formally) that
\[
&
\partial_1\big(\la (\nabla_S\psi\circ H_\omega)(x,.,E),\ol\Omega_1(\omega)\ra\big)(0)
=
\\
&
=
\la \partial_1\big(\nabla_S\psi\circ H_\omega\big)(x,.,E),\ol\Omega_1(\omega)\ra
\\
&
=
{1\over{\sqrt{\omega_1^2+\omega_2^2}}}
\la \partial_{\omega_1}\big(\nabla_S\psi\big)(x,\omega,E),\ol\Omega_1(\omega)\ra\\
&
=
{1\over{\sqrt{\omega_1^2+\omega_2^2}}}
\partial_{\omega_1}\la(\nabla_S\psi)(x,\omega,E),\ol\Omega_1(\omega)\ra
-{1\over{\sqrt{\omega_1^2+\omega_2^2}}}\la(\nabla_S\psi)(x,\omega,E),(\partial_{\omega_1}\ol\Omega_1)(\omega)\ra\\
&
=
{1\over{\sqrt{\omega_1^2+\omega_2^2}}}
(\partial_{\omega_1}^2\psi)(x,\omega,E)
+{{\omega_3}\over{\sqrt{\omega_1^2+\omega_2^2}}}
(\partial_{\omega_2}\psi)(x,\omega,E) 
\]
since $\partial_{\omega_1}\Big({1\over{\sqrt{\omega_1^2+\omega_2^2}}}\Big)=
{\partial\over{\partial\phi}}\Big({1\over{\sin\theta}}\Big)=0$,
and similarly
\[
&
\partial_2\big(\la (\nabla_S\psi\circ H_\omega)(x,.,E),\ol\Omega_2(\omega)\ra\big)(0)
=
\la \partial_{\omega_2}\big(\nabla_S\psi\big)(x,\omega,E),\ol\Omega_2(\omega)\ra\\
&
=
\partial_{\omega_2}\la(\nabla_S\psi)(x,\omega,E),\ol\Omega_2(\omega)\ra
-\la(\nabla_S\psi)(x,\omega,E),(\partial_{\omega_2}\ol\Omega_2)(x,\omega,E)\ra\\
&
=
(\partial_{\omega_2}^2\psi)(x,\omega,E).
\] 

As a conclusion we find that 
\[
&
\sum_{j=1}^2
\partial_j\big(\la (\nabla_S\psi\circ H_\omega)(x,.,E),\ol\Omega_j(\omega)\ra\big)(0)\\
&
=
{1\over{\omega_1^2+\omega_2^2}}(\partial_{\omega_1}^2\psi)(x,\omega,E)
+(\partial_{\omega_2}^2\psi)(x,\omega,E)+{{\omega_3}\over{\sqrt{\omega_1^2+\omega_2^2}}}(\partial_{\omega_2}\psi)(x,\omega,E)\\
&
=
(\Delta_S\psi)(x,\omega,E).
\]

Combining the above computations we see that 
\bea\label{pd-term}
\sum_{|\alpha|\leq 2}
a_{\alpha}(\omega,E)(\partial_{\omega}^\alpha \psi)(x,\omega,E)
=
-\pi\ (\partial_{E'}\mu)(E,E)
(\Delta_S\psi)(x,\omega,E).
\eea

B. We remark also that by (\ref{nab0}) in (\ref{var-for-13-b})
\be\label{van}
2\pi\ (\partial_{E'}\mu)(E,E)(\omega\cdot\nabla_S\psi)(x,\omega,E)=0.
\ee

\end{remark}

\subsection{A refined form of the exact equation}\label{exact}

We derive a refined form  for the exact  transport  operator under consideration.
Recall that for $j=1,\ 2$
\be\label{K-12}
(\ol {\s K}_{j}\psi)(x,\omega,E',E)
=
\hat{\sigma}_{j}(x,E',E)\int_{0}^{2\pi}\psi(x,\gamma(E',E,\omega)(s),E')ds,
\ee
and so 
\be\label{e-1}
{\s H}_j\big((\ol{\s K}_j\psi)(x,\omega,\cdot,E)\big)(E)
=
{\rm p.f.}\int_E^{E_m}{1\over{(E'-E)^j}}
\hat{\sigma}_{j}(x,E',E)\int_{0}^{2\pi}\psi(x,\gamma(E',E,\omega)(s),E')ds dE'
\ee

We begin with 

\begin{lemma}\label{e-le-2}
Suppose that $f\in C(\ol G\times S\times I, C^\alpha(I')),\ \alpha>0$. Then 
\be\label{e-0-a}
{\rm p.f.}\int_E^{E_m}{{f(x,\omega,E,E')}\over{E'-E}}dE'=\ln(E_m-E)f(x,\omega,E,E)+u(x,\omega,E)
\ee
where $u\in C(\ol G\times S\times I)$.
\end{lemma}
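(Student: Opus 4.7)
The proof plan is to invoke identity (\ref{c-0-a}) pointwise in the parameters $(x,\omega,E)$, read off a formula for $u$, and then establish continuity of $u$ by a dominated convergence argument after a change of variables.

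First, for each fixed $(x,\omega,E) \in \bar G \times S \times I$, the map $E' \mapsto f(x,\omega,E,E')$ is Hölder of order $\alpha$ on $I'$, so (\ref{c-0-a}) applies and gives
\[
\mathrm{p.f.}\int_E^{E_m}\frac{f(x,\omega,E,E')}{E'-E}dE'
= \int_E^{E_m}\frac{f(x,\omega,E,E')-f(x,\omega,E,E)}{E'-E}dE' + f(x,\omega,E,E)\ln(E_m-E).
\]
This identifies
\[
u(x,\omega,E) := \int_E^{E_m}\frac{f(x,\omega,E,E')-f(x,\omega,E,E)}{E'-E}dE'.
\]
The Hölder assumption $\|f\|_{C(\bar G\times S\times I,C^\alpha(I'))} =: C_\alpha < \infty$ implies that the integrand is dominated by $C_\alpha|E'-E|^{\alpha-1}$, so $u$ is well-defined and satisfies the bound $|u(x,\omega,E)|\le (C_\alpha/\alpha)(E_m-E)^\alpha$. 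In particular, as $E\to E_m^-$ we get $u(x,\omega,E)\to 0$, matching the value $u(x,\omega,E_m)=0$ coming from an empty integration range, which handles continuity at the endpoint $E=E_m$.

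For continuity at an arbitrary point $(x_0,\omega_0,E_0)$ with $E_0<E_m$, I substitute $s=E'-E$ to put $u$ on a varying-length but pinned-at-zero interval,
\[
u(x,\omega,E) = \int_0^{E_m-E}\frac{f(x,\omega,E,E+s)-f(x,\omega,E,E)}{s}ds,
\]
and extend to the fixed interval $[0,E_m-E_0]$ by multiplying by $\chi_{[0,E_m-E]}(s)$. For any sequence $(x_n,\omega_n,E_n)\to(x_0,\omega_0,E_0)$, the resulting integrands converge pointwise a.e.\ in $s$ to the integrand at the limit point (using joint continuity of $f$ in $(x,\omega,E)$ and continuity in $E'$), while the uniform Hölder bound $C_\alpha s^{\alpha-1}$ is an integrable majorant on $(0,E_m-E_0]$. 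The Lebesgue dominated convergence theorem then yields $u(x_n,\omega_n,E_n)\to u(x_0,\omega_0,E_0)$.

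The only mildly delicate point is that the domain of integration depends on $E$, but this is harmless once one introduces the cutoff $\chi_{[0,E_m-E]}(s)$; the endpoint case $E_0=E_m$ is handled separately by the $O((E_m-E)^\alpha)$ bound noted above. No further structure of $f$ is needed beyond the Hölder regularity in the last slot and joint continuity in the remaining variables.
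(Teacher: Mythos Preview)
Your proof is correct and follows essentially the same approach as the paper: invoke (\ref{c-0-a}) to split off the $\ln(E_m-E)f(x,\omega,E,E)$ term, identify $u$ as the residual proper integral, bound the integrand by $C_\alpha|E'-E|^{\alpha-1}$ via the H\"older assumption, and conclude continuity of $u$ by dominated convergence. The only cosmetic difference is that the paper writes $u$ over the fixed interval $I'$ with a characteristic function $\chi_{\R_+}(E'-E)$, whereas you substitute $s=E'-E$; your change of variables has the mild advantage of pinning the singularity at $s=0$ so that the majorant $C_\alpha s^{\alpha-1}$ is genuinely independent of the parameter $E$, which makes the DCT step cleaner (one tiny quibble: your ``fixed interval $[0,E_m-E_0]$'' should be enlarged slightly, e.g.\ to $[0,E_m-E_{00}]$ for some lower bound of the energy interval, to accommodate sequences with $E_n<E_0$, but this is immaterial).
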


\begin{proof}

We have
\[
&
{\rm p.f.}\int_E^{E_m}{{f(x;\omega,E,E')}\over{E'-E}}dE'
={\rm p.f.}\int_E^{E_m}{{f(x,\omega,E,E)}\over{E'-E}}dE'\\
&
+\int_E^{E_m}{{f(x,\omega,E,E')-f(x,\omega,E,E)}\over{E'-E}}dE'
=\ln(E_m-E)f(x,\omega,E,E)+u(x,\omega,E)
\]
where
\[
u(x,\omega,E):=\int_E^{E_m}{{f(x,\omega,E,E')-f(x,\omega,E,E)}\over{E'-E}}dE'.
\]

We find that $u(x,\omega,E)=\int_{I'}\chi_{\R_+}(E'-E){{f(x,\omega,E,E')-f(x,\omega,E,E)}\over{E'-E}}dE'$
where
\[
\Big|\chi_{\R_+}(E'-E){{f(x,\omega,E,E')-f(x,\omega,E,E)}\over{E'-E}}\Big|
\leq C_\alpha \chi_{\R_+}(E'-E)|E'-E|^{\alpha-1}.
\]
In addition,
\[
\int_{I'}\chi_{\R_+}(E'-E)|E'-E|^{\alpha-1}dE'={1\over\alpha}(E_m-E)^\alpha
\leq {1\over\alpha}E_m^\alpha
\]
and so the continuity of $u$ follows from the Lebesgue Dominated Convergence Theorem. This implies the assertion.

\end{proof}

The next Lemmas gives   information about the domains of the Hadamard finite part integral terms ${\s H}_1\big((\ol{\s K}_j\psi)(x,\omega,\cdot,E)\big)(E),\ j=1,2$.

\begin{lemma}\label{e-th-1} 
Suppose that $\hat\sigma_1\in C(\ol G\times I,C^1( I'))$ and that $0<\alpha<1$, $0<\beta<1$. Then for $ \psi\in C(\ol G\times I,C^\alpha(S)))\cap C(\ol G\times S,C^{\beta}(I))$
\be\label{e-2}
{\s H}_1\big((\ol{\s K}_1\psi)(x,\omega,\cdot,E)\big)(E)
=2\pi\ \ln(E_m-E)\hat{\sigma}_{1}(x,E,E)\psi(x,\omega,E)+u(x,\omega,E)
\ee
where
$u\in C(\ol G\times S\times I)$.
\end{lemma}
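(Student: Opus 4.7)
The plan is to put the statement into the form handled by Lemma \ref{e-le-2}. Set
\[
f(x,\omega,E,E') := \hat\sigma_1(x,E',E)\int_{0}^{2\pi}\psi(x,\gamma(E',E,\omega)(s),E')ds,
\]
so that ${\s H}_1\big((\ol{\s K}_1\psi)(x,\omega,\cdot,E)\big)(E) = {\rm p.f.}\int_E^{E_m}\frac{f(x,\omega,E,E')}{E'-E}dE'$. Since $\mu(E,E)=1$, the curve $\gamma(E,E,\omega)(\cdot)$ collapses to the constant point $\omega$, hence $f(x,\omega,E,E) = 2\pi \hat\sigma_1(x,E,E)\psi(x,\omega,E)$, which already identifies the coefficient of the $\ln(E_m-E)$ term. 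What remains is to produce $u\in C(\ol G\times S\times I)$ from Lemma \ref{e-le-2}, which requires establishing that $E'\mapsto f(x,\omega,E,E')$ is Hölder continuous on $I'$ with constants uniform in $(x,\omega,E)$.

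The first step is to control the angular integral $g(x,\omega,E,E'):=\int_0^{2\pi}\psi(x,\gamma(E',E,\omega)(s),E')ds$. Using the explicit formula $\gamma(E',E,\omega)(s)=R(\omega)(\sqrt{1-\mu^2}\cos s,\sqrt{1-\mu^2}\sin s,\mu)$ and the identity $1-\mu(E',E)^2 = 2(E'-E)/(E'(E+2))$ established implicitly in section \ref{moller}, one gets the key uniform estimate
\[
\n{\gamma(E_1,E,\omega)(s)-\gamma(E_2,E,\omega)(s)}\leq C|E_1-E_2|^{1/2}
\]
for $E_1,E_2\in [E,E_m]$. The bound on the ``$\mu$-component'' is immediate from $|\mu(E_1,E)-\mu(E_2,E)|\leq C|E_1-E_2|$, while the ``$\sqrt{1-\mu^2}$-component'' uses $|\sqrt{a}-\sqrt{b}|\leq \sqrt{|a-b|}$ to obtain the Hölder-$1/2$ rate; this $1/2$ loss is due to the derivative blow-up of $E'\mapsto\gamma(E',E,\omega)(s)$ at $E'=E$. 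I expect this $\sqrt{\cdot}$-type estimate to be the main (though very mild) technical point.

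Combining this with the hypotheses $\psi\in C(\ol G\times I,C^\alpha(S))\cap C(\ol G\times S, C^\beta(I))$ and the triangle inequality
\[
|\psi(x,\gamma(E_1,E,\omega)(s),E_1)-\psi(x,\gamma(E_2,E,\omega)(s),E_2)| \leq C\n{\gamma(E_1,E,\omega)(s)-\gamma(E_2,E,\omega)(s)}^\alpha + C|E_1-E_2|^\beta,
\]
and integrating in $s$, yields
\[
|g(x,\omega,E,E_1)-g(x,\omega,E,E_2)|\leq C\big(|E_1-E_2|^{\alpha/2}+|E_1-E_2|^\beta\big)
\]
with a constant uniform in $(x,\omega,E)$. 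A similar estimate shows continuity of $(x,\omega,E)\mapsto g(x,\omega,E,\cdot)$ as a $C^{\min(\alpha/2,\beta)}(I')$-valued map. Since $\hat\sigma_1\in C(\ol G\times I, C^1(I'))\subset C(\ol G\times I, C^{\min(\alpha/2,\beta)}(I'))$, the product $f=\hat\sigma_1\cdot g$ lies in $C(\ol G\times S\times I, C^{\gamma_0}(I'))$ with $\gamma_0 := \min(\alpha/2,\beta)>0$.

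Finally, applying Lemma \ref{e-le-2} to $f$ (with a trivial reshuffling of the arguments so $E'$ plays the role of the integration variable) gives
\[
{\rm p.f.}\int_E^{E_m}\frac{f(x,\omega,E,E')}{E'-E}dE' = \ln(E_m-E)f(x,\omega,E,E) + u(x,\omega,E),
\]
with $u\in C(\ol G\times S\times I)$. Substituting $f(x,\omega,E,E)=2\pi\hat\sigma_1(x,E,E)\psi(x,\omega,E)$ yields (\ref{e-2}) and finishes the proof.
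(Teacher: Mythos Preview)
Your proof is correct and follows essentially the same approach as the paper: you reduce to Lemma~\ref{e-le-2} by showing that $f(x,\omega,E,E')=\hat\sigma_1(x,E',E)\int_0^{2\pi}\psi(x,\gamma(E',E,\omega)(s),E')\,ds$ lies in $C(\ol G\times S\times I, C^{\gamma_0}(I'))$ with $\gamma_0=\min(\alpha/2,\beta)$, via the same triangle-inequality split into an energy part (giving $|E_1-E_2|^\beta$) and an angular part (giving $|E_1-E_2|^{\alpha/2}$ through the estimate $\n{\gamma(E_1,E,\omega)(s)-\gamma(E_2,E,\omega)(s)}\leq C|E_1-E_2|^{1/2}$). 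The paper records the latter estimate as \eqref{e-10} and arrives at exactly the same H\"older exponent $\delta\leq\min\{\alpha/2,\beta\}$ before invoking Lemma~\ref{e-le-2}.
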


\begin{proof}

We have
\bea\label{e-3}
&
{\s H}_1\big((\ol{\s K}_1\psi)(x,\omega,\cdot,E)\big)(E)
=
{\rm p.f.}\int_E^{E_m}{1\over{E'-E}}
\hat{\sigma}_{1}(x,E',E)\int_{0}^{2\pi}\psi(x,\gamma(E',E,\omega)(s),E')ds dE' 
\nonumber\\
&
={\rm p.f.}\int_E^{E_m}{1\over{E'-E}}f(x,\omega,E,E')dE'
\eea
where
\[
f(x,\omega,E,E'):=
\hat{\sigma}_{1}(x,E',E)\int_{0}^{2\pi}\psi(x,\gamma(E',E,\omega)(s),E')ds.
\]
We  show that $f\in C(\ol G\times S\times I,C^{\delta}(I'))$ where $0<\delta\leq\min\{{\alpha\over 2},\beta\}$.

Since $\hat\sigma_1\in  C(\ol G\times I,C^1( I'))$ it suffices to treat only the term 
\[
h_1(x,\omega,E,E'):=\int_{0}^{2\pi}\psi(x,\gamma(E',E,\omega)(s),E')ds.
\]
We have for $E',\ E''\in I$
\bea\label{e-6}
&
h_1(x,\omega,E,E')-h_1(x,\omega,E,E'')
\nonumber\\
&
=
\int_{0}^{2\pi}[\psi(x,\gamma(E',E,\omega)(s),E')-\psi(x,\gamma(E',E,\omega)(s),E'')]ds\nonumber\\
&
+
\int_{0}^{2\pi}[\psi(x,\gamma(E',E,\omega)(s),E'')-\psi(x,\gamma(E'',E,\omega)(s),E'')]ds=:I_1+I_2.
\eea
Since $\psi\in C(\ol G\times S,C^{\beta}(I))$   we obtain
\be\label{e-7}
|I_1|\leq 
\int_{0}^{2\pi}|\psi(x,\gamma(E',E,\omega)(s),E')-\psi(x,\gamma(E',E,\omega)(s),E'')|ds
\leq C_\beta |E'-E''|^{\beta}.
\ee
Similarly since $ \psi\in C(\ol G\times I,C^\alpha(S)))$ we have
\bea\label{e-8}
&
|I_2|\leq 
\int_{0}^{2\pi}|\psi(x,\gamma(E',E,\omega)(s),E'')-\psi(x,\gamma(E'',E,\omega)(s),E'')|ds\nonumber\\
&
\leq C_\alpha \n{\gamma(E',E,\omega)(s)-\gamma(E'',E,\omega)(s)}^{\alpha}
\leq 
C_\alpha C |E'-E''|^{{\alpha\over 2}}.
\eea
Here we used the estimate
\bea\label{e-10}
&
\n{\gamma(E',E,\omega)(s)-\gamma(E'',E,\omega)(s)}^2
\nonumber\\
&
=(\sqrt{1-\mu(E',E)^2}-\sqrt{1-\mu(E'',E)^2})^2+(\mu(E',E)-\mu(E'',E))^2
\leq C|E'-E''|
\eea
which can be seen by elementary computations.

Combining (\ref{e-6}), (\ref{e-7}) and (\ref{e-8}) we get
\[
|h_1(x,\omega,E,E')-h_1(x,\omega,E,E'')|\leq |I_1|+|I_2|\leq 
C_\beta |E'-E''|^\beta +C_\alpha C|E'-E''|^{{\alpha\over 2}}
\]
from which (since $I$ is bounded) it follows that
\[
|h_1(x,\omega,E,E')-h_1(x,\omega,E,E'')|\leq  C_\delta|E'-E''|^\delta
\]
Hence the assertion is a consequence of Lemma \ref{e-le-2}. 

\end{proof}

The proof of the above lemma \ref{e-th-1} gives immediately
\begin{corollary}\label{x-le-cor}
Suppose that $\psi\in C(\ol G,C^\alpha(S\times I)),\ \alpha>0$. Let
\[
h_1(x,\omega,E,E'):=\int_{0}^{2\pi}\psi(x,\gamma(E',E,\omega)(s),E')ds.
\]
Then $h_1$ obeys  
\be\label{le-reg-1-a}
|h_1(x,\omega,E',E)-h_1(x,\omega,E,E)|\leq C_1\n{\psi}_{C(\ol G,C^\alpha(S\times I)}(E'-E)^{{\alpha\over 2}}, \ E'\geq E
\ee
where $C_1$ is independent of $\psi$.
\end{corollary}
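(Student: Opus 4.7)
The plan is to mimic the splitting used in the proof of Lemma~\ref{e-th-1} — namely the decomposition $h_1(x,\omega,E,E') - h_1(x,\omega,E,E'') = I_1 + I_2$ — but specialize $E'' = E$ and feed in the stronger assumption that $\psi$ lies in $C(\ol G, C^\alpha(S \times I))$ (jointly Hölder in $(\omega,E)$). First I would write
\[
h_1(x,\omega,E,E') - h_1(x,\omega,E,E) = \int_0^{2\pi} \bigl[\psi(x,\gamma(E',E,\omega)(s),E') - \psi(x,\omega,E)\bigr]\, ds,
\]
using that $\mu(E,E) = 1$ forces $\gamma(E,E,\omega)(s) = R(\omega)(0,0,1) = \omega$, so that $h_1(x,\omega,E,E) = 2\pi\,\psi(x,\omega,E)$. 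Adding and subtracting $\psi(x,\gamma(E',E,\omega)(s),E)$ splits the integrand into two telescoping pieces $J_1 + J_2$, where $J_1$ varies only the energy slot of $\psi$ and $J_2$ only the angular slot.

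Next I would bound each piece using the joint $\alpha$-Hölder regularity. For $J_1$ the estimate is immediate: $|J_1| \leq 2\pi\,\n{\psi}_{C(\ol G, C^\alpha(S\times I))}\,(E'-E)^\alpha$. For $J_2$ the geometric ingredient is exactly estimate (\ref{e-10}) from the proof of Lemma~\ref{e-th-1}; applied with $E'' = E$ it reduces to $\n{\gamma(E',E,\omega)(s) - \omega}^2 = 2(1-\mu(E',E)) \leq C(E'-E)$, giving $\n{\gamma(E',E,\omega)(s) - \omega} \leq C^{1/2}(E'-E)^{1/2}$. Combined with Hölder regularity this yields $|J_2| \leq 2\pi\,C^{\alpha/2}\,\n{\psi}_{C(\ol G, C^\alpha(S\times I))}\,(E'-E)^{\alpha/2}$.

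To conclude I would absorb $J_1$ into $J_2$ via the trivial bound $(E'-E)^\alpha \leq (E_m - E_0)^{\alpha/2}\,(E'-E)^{\alpha/2}$, valid on the bounded interval $I$, producing a single constant $C_1$ depending only on $\alpha$, the diameter of $I$, and the geometric constant in (\ref{e-10}), but crucially not on $\psi$. No real obstacle arises: the statement is truly a corollary, obtained by specializing $E'' = E$ in the estimates already carried out for Lemma~\ref{e-th-1}, and the lower exponent $\alpha/2$ reflects the square-root rate at which the circle $\gamma(E',E,\omega)(\cdot)$ collapses to the point $\omega$ as $E' \to E$.
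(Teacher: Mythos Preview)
Your proposal is correct and follows essentially the same route as the paper: the paper's proof simply says to take $\alpha=\beta$ in the proof of Lemma~\ref{e-th-1} and track that the H\"older constants $C_\alpha,C_\beta$ there equal $2\pi\,\n{\psi}$, which is exactly your splitting $J_1+J_2$ specialized to $E''=E$ together with the geometric estimate (\ref{e-10}) (equivalently (\ref{ex-21})). Your explicit write-up, including the absorption of the $(E'-E)^\alpha$ term into $(E'-E)^{\alpha/2}$ via boundedness of $I$, matches the paper's reasoning in all essentials.
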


\begin{proof}
Choose $\alpha=\beta$ in the above lemma.
It needs only to note that in the above proof $C_\beta=2\pi\ \n{\psi}_{C(\ol G\times S,C^\beta(I))}$
and $C_\alpha=2\pi\ \n{\psi}_{C(\ol G\times I,C^\alpha(I))}$.
\end{proof}

Let for $E'\not=E$
\[
h(x,\omega,E',E):=
\int_{0}^{2\pi}\la \nabla_S\psi(x,\gamma(E',E,\omega)(s),E'),{\p \gamma{E}}(E',E,\omega)(s)\ra ds .
\]
By the proof of Theorem \ref{ad-le-6} $h$ is defined also for $E'=E$ and
\[
&
h(x,\omega,E,E)=\lim_{E'\to E}h(x,\omega,E',E)
= 
\sum_{|\alpha|\leq 2}\widehat a_{\alpha}(E,\omega)(\partial_{\omega}^\alpha\psi)(x,\omega,E)\\
&
:=\lim_{E'\to E}\int_0^{2\pi}
\sum_{j=1}^2\partial_j\big(\la (\nabla_S\psi\circ H_\omega)(x,.,E),\hat\eta(E',E,\omega,s)\ra\big)(0)\xi_j(E',E,\omega,s)ds
\]
where now $\hat\eta={\p \gamma{E}}(E',E,\omega)(s)$ and where we used that
\[
2\pi\ (\partial_{E}\mu)(E,E)(\omega\cdot\nabla_S\psi)(x,\omega,E)=0.
\]

We have the following technical lemma

\begin{lemma}\label{le-reg}
Suppose that 
%$\hat\sigma_2\in C(\ol G\times I,C^{\alpha}(I')),\ \alpha>0$ and that 
$\psi\in C(\ol G,C^{1}(I,C^{3}(S)))$. Then the mapping $h$ obeys 
\be\label{le-reg-1}
|h(x,\omega,E',E)-h(x,\omega,E,E)|\leq C(E'-E)^{1\over 2}, \ E'\geq E.
\ee
Analogous result is valid for the mapping
\[
 h_2(x,\omega,E',E):=
\int_{0}^{2\pi}\la \nabla_S\psi(x,\gamma(E',E,\omega)(s),E'),{\p \gamma{E'}}(E',E,\omega)(s)\ra ds .
\]
\end{lemma}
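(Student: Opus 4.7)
The approach is to Taylor-expand $\nabla_S\psi(x,\cdot,E')$ around $\omega$ inside the integrand defining $h$, and thereby split $h(x,\omega,E',E)$ into three pieces: a zero-order piece that vanishes, a first-order piece that approaches $h(x,\omega,E,E)$ at the linear rate $O(E'-E)$, and a Taylor residual that produces the $(E'-E)^{1/2}$ bound.

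Write $\omega' := \gamma(E',E,\omega)(s)$, $\zeta := H_\omega^{-1}(\omega')\in T_\omega(S)$, $\xi := J(\zeta)$, and $\hat\eta(s) := {\p \gamma E}(E',E,\omega)(s)$. By (\ref{ex-21}) together with (\ref{e4-a}) one has $\n{\zeta}\leq C\n{\omega'-\omega}\leq C(E'-E)^{1/2}$, while the analogue of (\ref{pg}) with $\partial_E\mu$ in place of $\partial_{E'}\mu$ yields $\n{\hat\eta} = |\partial_E\mu|/\sqrt{1-\mu^2}\leq C(E'-E)^{-1/2}$. The assumption $\psi\in C^1(I,C^3(S))$ ensures that $\nabla_S\psi(x,\cdot,E')$ is of class $C^2$ on $S$ uniformly in $(x,E')$; hence the first-order Taylor expansion (\ref{tay-ex}) about $\omega$ applied to the scalar function $\omega''\mapsto\la\nabla_S\psi(x,\omega'',E'),\hat\eta\ra$ has a residual bounded pointwise by $C\n{\zeta}^2\n{\hat\eta}$. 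Inserting this expansion and integrating over $s\in[0,2\pi]$ yields $h(x,\omega,E',E)=A_1+A_2+A_3$, with $A_1:=\la\nabla_S\psi(x,\omega,E'),\int_0^{2\pi}\hat\eta\,ds\ra$, $A_2:=\sum_{j=1}^2\int_0^{2\pi}\la\partial_j(\nabla_S\psi\circ H_\omega)(x,0,E'),\hat\eta\ra\xi_j\,ds$, and $A_3$ the integrated Taylor residual.

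Since $\int_0^{2\pi}R(\omega)(\cos s,\sin s,0)\,ds=0$, one has $\int_0^{2\pi}\hat\eta\,ds = 2\pi(\partial_E\mu)\omega$, so $A_1=0$ by (\ref{nab0}). The residual bound gives $|A_3|\leq C\int_0^{2\pi}\n{\zeta}^2\n{\hat\eta}\,ds\leq C'(E'-E)^{1/2}$, which is exactly the rate asserted. It then remains to show $|A_2-h(x,\omega,E,E)|=O(E'-E)$, which is strictly better than $(E'-E)^{1/2}$. Substituting $\xi_j={\rm \ol{arc}cos}(\mu)\la\tau(\omega,s),\ol\Omega_j\ra$ with $\tau(\omega,s):=R(\omega)(\cos s,\sin s,0)$, and $\hat\eta=(\partial_E\mu)\omega-\tfrac{(\partial_E\mu)\mu}{\sqrt{1-\mu^2}}\tau(\omega,s)$, the $\omega$-part of $\hat\eta$ contributes a multiple of $\int_0^{2\pi}\tau(\omega,s)\,ds=0$; the tangential part leaves $A_2=-\tfrac{\mu\,{\rm \ol{arc}cos}(\mu)}{\sqrt{1-\mu^2}}\partial_E\mu(E',E)\,M(E')$, where $M(E'):=\sum_j\int_0^{2\pi}\la\partial_j(\nabla_S\psi\circ H_\omega)(x,0,E'),\tau(\omega,s)\ra\la\tau(\omega,s),\ol\Omega_j\ra\,ds$ is $C^1$-smooth in $E'$. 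The prefactor $\mu\,{\rm \ol{arc}cos}(\mu)/\sqrt{1-\mu^2}$ is smooth in $\mu$ near $\mu=1$ with value $1$ there (by the L'Hospital calculation used in Example \ref{ex-map}), so it differs from $1$ by $O(1-\mu)=O(E'-E)$. Combined with the $C^1$-dependence of $M$ and of $\partial_E\mu(\cdot,E)$ and the identification $h(x,\omega,E,E)=-\partial_E\mu(E,E)M(E)$ coming from the analogue of Lemma \ref{aij} with $\partial_{E'}$ replaced throughout by $\partial_E$, this yields $|A_2-h(x,\omega,E,E)|=O(E'-E)$. Summing, $|h(x,\omega,E',E)-h(x,\omega,E,E)|\leq|A_2-h(x,\omega,E,E)|+|A_3|=O((E'-E)^{1/2})$. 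The claim for $h_2$ follows by the same argument with $\partial_{E'}\gamma$ in place of $\partial_E\gamma$.

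The main obstacle is exactly the $A_3$ estimate: the singular factor $\n{\hat\eta}=O((E'-E)^{-1/2})$ caps the achievable exponent at $1/2$, and it is the exact balance between the geometric bound $\n{\zeta}=O((E'-E)^{1/2})$ from (\ref{ex-21}) and this blow-up that pins down the final H\"older exponent (which is also why the exponent $\alpha/2$ already appeared in Corollary \ref{x-le-cor}).
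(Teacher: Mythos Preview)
Your proof is correct and follows the same overall strategy as the paper's: Taylor-expand $\nabla_S\psi$ on $S$ around $\omega$ and balance the geometric bounds $\n{\zeta}=O((E'-E)^{1/2})$ against $\n{\hat\eta}=O((E'-E)^{-1/2})$ to control the residual.

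The organization differs in two respects. First, the paper begins (its Part A) by freezing the energy argument of $\psi$ at $E$, writing $h=h_1+h_2$ with $h_2$ the difference $\psi(\cdot,E')-\psi(\cdot,E)$, and bounding $|h_2|\leq C(E'-E)^{1/2}$ via $C^1$-regularity in $E$ times $\n{\hat\eta}$; only then does it Taylor-expand $h_1$ (Part B). You skip this split and Taylor-expand directly at energy $E'$, absorbing the energy dependence into $M(E')$ and using $C^1$-smoothness of $M$ at the end. This is a legitimate and slightly more economical route. Second, in handling the first-order Taylor piece the paper keeps the full vector $R(\omega)(-\partial_E\mu\,\mu\cos s,-\partial_E\mu\,\mu\sin s,\sqrt{1-\mu^2}\,\partial_E\mu)$ inside the inner product and bounds the difference $q_j(E',E,\cdot)-q_j(E,E,\cdot)$ crudely via the $\sqrt{1-\mu^2}$ term, obtaining only $O((E'-E)^{1/2})$ for that piece (see the paper's (\ref{le-reg-18})--(\ref{le-reg-23})). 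You instead split $\hat\eta$ into its $\omega$-component and its tangential $\tau$-component and observe that the $\omega$-component contributes zero after the $s$-integration (since $\int_0^{2\pi}\la\tau(\omega,s),\ol\Omega_j\ra\,ds=0$); this removes the $\sqrt{1-\mu^2}$ obstruction and yields the sharper $|A_2-h(x,\omega,E,E)|=O(E'-E)$. The improvement is cosmetic for the lemma as stated, since the residual $A_3=O((E'-E)^{1/2})$ sets the final exponent in both approaches.
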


\begin{proof}

A. We have for $E'\not=E$
\bea\label{le-reg-2}
&
h(x,\omega,E',E)=
\int_{0}^{2\pi}\la \nabla_S\psi(x,\gamma(E',E,\omega)(s),E'),{\p \gamma{E}}(E',E,\omega)(s)\ra ds \nonumber\\
&
=
\int_{0}^{2\pi}\la \nabla_S\psi(x,\gamma(E',E,\omega)(s),E),{\p \gamma{E}}(E',E,\omega)(s)\ra ds \nonumber\\
&
+
\int_{0}^{2\pi}\Big(\la \nabla_S\psi(x,\gamma(E',E,\omega)(s),E')
- \nabla_S\psi(x,\gamma(E',E,\omega)(s),E),{\p \gamma{E}}(E',E,\omega)(s)\ra\Big) ds\nonumber\\
&
=:h_1(x,\omega,E',E)+h_2(x,\omega,E',E).
\eea
Since $\psi\in C(\ol G,C^{1}(I,C^1(S)))$
\bea\label{le-reg-2a}
&
|h_2(x,\omega,E',E)|\leq C_1|E'-E| \int_0^{2\pi}\n{{\p \gamma{E}}(E',E,\omega)(s)}ds\nonumber\\
&
=2\pi\ C_1|E'-E|\ |(\partial_E\mu)(E',E)|{{1}\over{\sqrt{1-\mu(E',E)^2}}}
\leq C_1'\ |E'-E|^{1\over 2}
\eea
where we noticed that $1-\mu(E',E)^2=1-{{E(E'+2)}\over{E'(E+2)}}={{2(E'-E))}\over{E'(E+2)}}$. Note that
\be\label{apu}
h(x,\omega,E',E)-h(x,\omega,E,E)=h_1(x,\omega,E',E)-h(x,\omega,E,E)+h_2(x,\omega,E',E)
\ee
and so it suffices to consider the term $h_1(x,\omega,E',E)-h(x,\omega,E,E)$. This is done in the next paragraph.
 
B.
Let $\zeta=\zeta(E',E,\omega,s)=H_\omega^{-1}(\omega')\sim \xi(E',E,\omega,s)=\xi,\ \omega'=\gamma(E',E,\omega)(s)$ and
$\hat\eta=\hat\eta(E',E,\omega,s)={\p {\gamma}{E}}(E',E,\omega)(s)$.
In virtue of the Taylor's formula (cf. (\ref{t-ex}))  we  have
for $E'\not=E$ 
\bea\label{ex-9-a}
&
h_1(x,\omega,E',E)\nonumber\\
&
=
\int_0^{2\pi}
\la (\nabla_S\psi)(x,\gamma(E',E,\omega)(s),E),{\p {\gamma}{E}}(E',E,\omega)(s)\ra ds
\nonumber\\
&
=
\int_0^{2\pi}
\la (\nabla_S\psi\circ H_\omega)(x,.,E),{\p {\gamma}{E}}(E',E,\omega)(s)\ra(\xi) ds
\nonumber\\
&
=\int_0^{2\pi}\la (\nabla_S\psi)(x,\omega,E),{\p {\gamma}{E}}(E',E,\omega)(s)\ra ds\nonumber\\
&
+ 
\sum_{j=1}^2\int_0^{2\pi}\partial_j\big(\la(\nabla_S\psi\circ H_\omega)(x,.,E),\hat\eta(E',E,\omega,s)\ra\big)(0)\xi_j(E',E,\omega,s)ds
\nonumber\\
&
+
\int_0^{2\pi}
(R_\omega\psi(x,\cdot,E))(\omega')\Big(\zeta(E',E,\omega, s),\zeta(E',E,\omega,\hat\eta(E',E,\omega,s)\Big) ds
\eea
where the residual $(R_\omega\psi(x,\cdot,E))(\omega')$ is 
\bea\label{t-residual-b}
&
R_\omega(\psi(x,.,E))(\omega')(\zeta(E',E,\omega, s),\zeta(E',E,\omega, s),\hat\eta(E',E,\omega,s)
\nonumber\\
&
=
\sum_{|\alpha|=2}{{|\alpha|}\over{\alpha !}}\int_0^1(1-t)\partial_\xi^\alpha
\big(\la (\nabla_S\psi)(x,.,E)\circ H_\omega),\hat\eta\ra\big) (t\xi)dt \cdot \xi^\alpha. 
\eea  
Hence
\bea\label{u1}
&
h_1(x,\omega,E',E)-h(x,\omega,E,E)
=
\int_{0}^{2\pi}
\la (\nabla_S\psi)(x,\omega,E),
{\p {\gamma}{E}}(E',E,\omega)(s)\ra ds\nonumber\\
&
+
\sum_{j=1}^2\int_0^{2\pi}\partial_j\big(\la(\nabla_S\psi\circ H_\omega)(x,.,E),\hat\eta(E',E,\omega,s)\ra\big)(0)\xi_j(E',E,\omega,s)ds\nonumber\\
&
-
\lim_{E'\to E}
\sum_{j=1}^2\int_0^{2\pi}\partial_j\big(\la(\nabla_S\psi\circ H_\omega)(x,.,E),\hat\eta(E',E,\omega,s)\ra\big)(0)\xi_j(E',E,\omega,s)ds
\nonumber\\
&
+
\int_0^{2\pi}
R_\omega(\psi(x,.,E))(\omega')(\zeta(E',E,\omega, s),\zeta(E',E,\omega, s),\hat\eta(E',E,\omega,s) ds.
\eea
Similarly as in the proof of Theorem \ref{ad-le-6} we find that
\bea\label{u2}
&
\Big|\int_0^{2\pi}
R_\omega(\psi(x,.,E))(\omega')\Big(\zeta(E',E,\omega, s),\zeta(E',E,\omega, s),\hat\eta(E',E,\omega,s)\Big) ds\Big|
\nonumber\\
&
\leq
C\sqrt{1-\mu(E',E)}\leq C'|E'-E|^{1\over 2}
\eea
and that
\be\label{ex-10-a}
\int_{0}^{2\pi}
\la (\nabla_S\psi)(x,\omega,E),
{\p {\gamma}{E}}(E',E,\omega)(s)\ra ds=0.
\ee

Consider the term 
\[
&
\sum_{j=1}^2\int_0^{2\pi}\partial_j\big(\la(\nabla_S\psi\circ H_\omega)(x,.,E),\hat\eta(E',E,\omega,s)\ra\big)(0)\xi_j(E',E,\omega,s)ds\nonumber\\
&
-
\lim_{E'\to E}
\sum_{j=1}^2\int_0^{2\pi}\partial_j\big(\la(\nabla_S\psi\circ H_\omega)(x,.,E),\hat\eta(E',E,\omega,s)\ra\big)(0)\xi_j(E',E,\omega,s)ds.
\]
In due to proof of Lemma \ref{aij} (here we again denote shortly $\mu=\mu(E',E)$) 
\bea\label{le-reg-6}
&
\int_0^{2\pi}\partial_j\big(\la(\nabla_S\psi\circ H_\omega)(x,.,E),\hat\eta(E',E,\omega,s)\ra\big)(0)\xi_j(E',E,\omega,s)ds
=
{{\arccos(\mu)}\over{\sqrt{1-\mu^2}}}
\int_0^{2\pi}\nonumber\\
&
\partial_j\big(\la(\nabla_S\psi\circ H_\omega)(x,.,E),
R(\omega)\big(-(\partial_{E}\mu)\mu\ \cos(s),
-(\partial_{E}\mu)\mu\ \sin(s),\sqrt{1-\mu^2}\partial_{E}\mu\big)
\ra\big)(0)ds
\nonumber
\\
&
\cdot
\big(
R(\omega)\big(\cos(s),\sin(s),0\big)
\big)_j ds
\nonumber\\
&
=
{{\arccos(\mu)}\over{\sqrt{1-\mu^2}}}\int_0^{2\pi}b_j(\omega,s)q_{j}(E',E,\omega,s) ds
\eea
where 
\[
b_j(\omega,s)
:=
\big(
R(\omega)\big(\cos(s),\sin(s),0\big)
\big)_j=\la R(\omega)\big(\cos(s),\sin(s),0\big),\ol\Omega_j\ra
\]
and
\[
&
q_{j}(E',E,\omega,s)\\
&
:=
\partial_j\big(\la(\nabla_S\psi\circ H_\omega)(x,.,E),
R(\omega)\big(-(\partial_{E}\mu)\mu\ \cos(s),
-(\partial_{E}\mu)\mu\ \sin(s),\sqrt{1-\mu^2}\partial_{E}\mu\big)
\ra\big)(0)
\]

Note that
\[
&
\lim_{E'\to E}\sum_{j=1}^2\int_0^{2\pi}\partial_j\big(\la(\nabla_S\psi\circ H_\omega)(x,.,E),\hat\eta(E',E,\omega,s)\ra\big)(0)\xi_j(E',E,\omega,s)ds
\\
&
=
\int_0^{2\pi}b_j(\omega,s)q_{j}(E,E,\omega,s) ds
.
\]
Hence we have
\bea\label{le-reg-20}
&
\Big|\int_0^{2\pi}\partial_j\big(\la(\nabla_S\psi\circ H_\omega)(x,.,E),\hat\eta(E',E,\omega,s)\ra\big)(0)\xi_j(E',E,\omega,s)ds\nonumber\\
&
-
\lim_{E'\to E}
\int_0^{2\pi}\partial_j\big(\la(\nabla_S\psi\circ H_\omega)(x,.,E),\hat\eta(E',E,\omega,s)\ra\big)(0)\xi_j(E',E,\omega,s)ds\Big|\nonumber\\
&
=
\Big|{{\arccos(\mu(E',E))}\over{\sqrt{1-\mu(E',E)^2}}}\int_0^{2\pi}b_j(\omega,s)q_{j}(E',E,\omega,s)ds-\int_0^{2\pi}b_j(\omega,s)q_{j}(E,E,\omega,s)ds\Big|\nonumber\\
&
\leq
\Big|{{\arccos(\mu(E',E))}\over{\sqrt{1-\mu(E',E)^2}}}-1\Big|\ \Big|\int_0^{2\pi}b_j(\omega,s)q_j(E',E,\omega,s)ds\Big|\nonumber\\
&
+\Big|\int_0^{2\pi}b_j(\omega,s)q_{j}(E',E,\omega,s)ds-\int_0^{2\pi}b_j(\omega,s)q_{j}(E,E,\omega,s)ds\Big|.
\eea
Recall that $\arccos(\mu)=\arcsin(\sqrt{1-\mu^2})$ and
\[
\arcsin(x)
=x+{1\over {2}}{{x^3}\over 3}+{{1\cdot 3}\over{2\cdot 4}}{{x^5}\over 5}+...,\ |x|<1.
\]
Hence we find that 
\be\label{le-reg-11}
\Big|{{\arccos(\mu(E',E))}\over{\sqrt{1-\mu(E',E)^2}}}-1\Big|
\leq C_4 (1-\mu(E',E)^2)\leq C_4'|E'-E|.
\ee
Since $\psi\in C(\ol G,C^{1}(I,C^{3}(S)))$
\[
\Big|\int_0^{2\pi}b_j(\omega,s)q_j(E',E,\omega,s)ds\Big|\leq C_5
\]
and
\bea\label{le-reg-18}
&
\Big|\int_0^{2\pi}b_j(\omega,s)q_{j}(E',E,\omega,s)ds-\int_0^{2\pi}b_j(\omega,s)q_{j}(E,E,\omega,s)ds\Big|\nonumber\\
&
\leq 
\int_0^{2\pi}C_6|b_{j}(\omega,s)|
\Big(
[(\partial_{E}\mu)(E',E)\mu(E',E)-(\partial_{E}\mu)(E,E)]^2+|(\partial_{E}\mu)(E',E)|^2(1-\mu(E',E)^2)\Big)^{{1\over 2}}ds.
\eea
Since $E_0>0$ we see that $\mu\in C^2(I^2)$ and so
\be\label{le-reg-12}
|(\partial_{E}\mu)(E',E)\mu(E',E)-(\partial_{E}\mu)(E,E)|
=|(\partial_{E}\mu\cdot \mu)(E',E)-(\partial_{E}\mu\cdot \mu)(E,E)|
\leq C_7|E'-E|.
\ee
Moreover,
\be 
|1-\mu(E',E)^2|\leq C_8|E'-E|.
\ee
Hence by (\ref{le-reg-20}), (\ref{le-reg-11}) and (\ref{le-reg-18})
\bea\label{le-reg-23}
&
\Big|\sum_{j=1}^2\int_0^{2\pi}\partial_j\big(\la(\nabla_S\psi\circ H_\omega)(x,.,E),\hat\eta(E',E,\omega,s)\ra\big)(0)\xi_j(E',E,\omega,s)ds\nonumber\\
&
-
\lim_{E'\to E}
\sum_{j=1}^2\int_0^{2\pi}\partial_j\big(\la(\nabla_S\psi\circ H_\omega)(x,.,E),\hat\eta(E',E,\omega,s)\ra\big)(0)\xi_j(E',E,\omega,s)ds\Big|
\nonumber\\
& 
\leq  C |E'-E|^{1\over 2}.
\eea
As a conclusion we see that the assertion follows from  (\ref{le-reg-2a}), (\ref{apu}), (\ref{u1}), (\ref{u2}), (\ref{ex-10-a}) and (\ref{le-reg-23}).
This completes the proof.

\end{proof}

Tracking similarly as in Corollary \ref{x-le-cor} the constants appearing in the estimates used in the proof of the above lemma
we get the following corollary

\begin{corollary}\label{le-reg-cor}
Suppose that 
%$\hat\sigma_2\in C(\ol G\times I,C^{\alpha}(I')),\ \alpha>0$ and that 
$\psi\in C(\ol G,C^{1}(I,C^{3}(S)))$. Then the mapping $h_2$ obeys 
\be\label{le-reg-1-k}
|h_2(x,\omega,E',E)-h_2(x,\omega,E,E)|\leq C_2\n{\psi}_{ C(\ol G,C^{1}(I,C^{3}(S)))}(E'-E)^{{1\over 2}}, \ E'\geq E
\ee
where $C_2$ is independent of $\psi$. 
\end{corollary}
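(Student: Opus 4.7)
The strategy is to repeat the argument of Lemma \ref{le-reg} verbatim, replacing the partial derivative ${\p \gamma{E}}$ by ${\p \gamma{E'}}$ throughout, and to verify that every constant that appears is the product of a purely geometric factor (depending only on $I$, $S$ and on the mapping $\mu$) with a seminorm of $\psi$ that is dominated by $\n{\psi}_{C(\ol G,C^{1}(I,C^{3}(S)))}$. The structure of the proof of Lemma \ref{le-reg} makes this tracking essentially mechanical: the mapping $\hat\eta(E',E,\omega,s)={\p \gamma{E}}(E',E,\omega)(s)$ used there is replaced by $\eta(E',E,\omega,s)={\p \gamma{E'}}(E',E,\omega)(s)$, and the estimate $\n{\eta}\leq |(\partial_{E'}\mu)|/\sqrt{1-\mu^2}$ behaves exactly like the bound used for $\hat\eta$ because $\partial_{E'}\mu$ and $\partial_E\mu$ are both smooth on $I\times I$ with $1-\mu(E',E)^2={2(E'-E)/(E'(E+2))}$ up to sign.

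Concretely, I would first split $h_2(x,\omega,E',E)-h_2(x,\omega,E,E)$ as in (\ref{le-reg-2})--(\ref{apu}) into the difference $h_2^{(1)}(x,\omega,E',E)-h_2(x,\omega,E,E)$ plus a remainder $h_2^{(2)}$, where $h_2^{(1)}$ has the inner $\nabla_S\psi$ evaluated at energy $E$ instead of $E'$. The remainder $h_2^{(2)}$ is controlled by $C_1\n{\psi}_{C(\ol G,C^1(I,C^1(S)))}|E'-E|^{1/2}$ exactly as in (\ref{le-reg-2a}), since the gain of a factor $|E'-E|$ from the Lipschitz continuity of $\nabla_S\psi$ in $E$ combines with the $1/\sqrt{1-\mu^2}$ blow-up of $\n{\eta}$ to give the $|E'-E|^{1/2}$ rate. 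Then on $h_2^{(1)}$ I apply the Taylor expansion on $S$ at $\omega$ (as in (\ref{ex-9-a})): the zeroth-order term vanishes by (\ref{nab0}) since $\int_0^{2\pi}\eta(E',E,\omega,s)\,ds = 2\pi(\partial_{E'}\mu)\,\omega$; the first-order term is estimated by repeating the computation (\ref{le-reg-6})--(\ref{le-reg-23}) with $\partial_E\mu$ replaced by $\partial_{E'}\mu$, using that both $\arccos(\mu)/\sqrt{1-\mu^2}-1$ and $(\partial_{E'}\mu\cdot\mu)(E',E)-(\partial_{E'}\mu)(E,E)$ are $O(|E'-E|)$; the second-order Taylor residual is bounded by (\ref{ex-12-b}) after substituting $\eta$ for $\hat\eta$, giving the $\sqrt{1-\mu}\leq C|E'-E|^{1/2}$ decay.

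In every one of these bounds the constants multiplying $|E'-E|^{1/2}$ are products of quantities depending only on $I$, $S$ and $\mu$ with one of $\n{\psi}_{C(\ol G,C^1(I,C^1(S)))}$, $\n{\nabla_S\psi}_{C(\ol G\times I,C^2(S))}$ or $\n{\psi}_{C(\ol G\times I,C^3(S))}$, each of which is bounded by $\n{\psi}_{C(\ol G,C^{1}(I,C^{3}(S)))}$. Summing over the three contributions yields the required estimate with a constant $C_2$ that is independent of $\psi$. The main (and essentially only) technical obstacle is the bookkeeping in the first-order Taylor term: one must verify that the integrand of (\ref{le-reg-18}) depends linearly (in the sense of norms) on second derivatives of $\nabla_S\psi\circ H_\omega$ at $0$, which is where the $C^3(S)$-regularity enters, and that the pull-back $H_\omega$ and rotation $R(\omega)$ introduce only $\omega$-dependent factors bounded uniformly in $\omega\in S$. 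The details beyond this are direct transcriptions of the estimates already carried out in the proof of Lemma \ref{le-reg}.
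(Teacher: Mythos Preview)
Your proposal is correct and follows exactly the approach the paper takes: the paper's proof of this corollary is the single sentence ``Tracking similarly as in Corollary \ref{x-le-cor} the constants appearing in the estimates used in the proof of the above lemma we get the following corollary,'' and what you have written is precisely a detailed execution of that constant-tracking through the steps (\ref{le-reg-2})--(\ref{le-reg-23}) of Lemma \ref{le-reg} with $\hat\eta$ replaced by $\eta$. Your identification of where the $C^3(S)$-regularity enters (the first-order Taylor term via second derivatives of $\nabla_S\psi\circ H_\omega$) and where the $C^1(I)$-regularity enters (the $h_2^{(2)}$ remainder) matches the paper's implicit reasoning.
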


The previous Lemma \ref{le-reg} guarantees that the below p.f.-integrals exit (we skip  detailed treatments).

We prove the following refined form for the exact transport operator related to M\o ller interaction.

\begin{theorem}\label{ex-eq}
Let $\hat\sigma_1\in C(\ol G\times I,C^1(I')),\ \hat\sigma_2\in C(\ol G, C^{2}(I'\times  I))$ and let $\psi\in C(\ol G,C^2(I,C^3(S)))$.
Then the exact transport operator 
\bea\label{exth-1}
&
T\psi=-
{\s H}_2\big((\ol{\s K}_2\psi)(x,\omega,\cdot,E)\big)(E) 
+
{\s H}_1\big((\ol{\s K}_1\psi)(x,\omega,\cdot,E)\big)(E)\nonumber\\
&
+\omega\cdot\nabla_x\psi+\Sigma\psi
-K_r\psi
\eea
can be expressed by
\bea\label{exth-2}
&
(T\psi)(x,\omega,E)=
-{\partial\over{\partial E}}\Big(
{\s H}_1\big((\ol{\s K}_2\psi)(x,\omega,\cdot,E)\big)(E)\Big)
-
2\pi\ 
\hat\sigma_2(x,E,E){\p \psi{E}}(x,\omega,E)\nonumber\\
&
-\hat\sigma_2(x,E,E)
\sum_{|\alpha|\leq 2}a_{\alpha}(E,\omega)(\partial_{\omega}^\alpha\psi)(x,\omega,E)\nonumber\\
&
+{\rm p.f.}\int_E^{E_m}{1\over{E'-E}}
\hat\sigma_2(x,E',E)\int_{0}^{2\pi}\la \nabla_S\psi(x,\gamma(E',E,\omega)(s),E'),{\p \gamma{E}}(E',E,\omega)(s)\ra ds dE'
\nonumber\nonumber\\
{}&
+
{\rm p.f.}\int_E^{E_m}{1\over{E'-E}}
{\p {\hat\sigma_2}{E}}(x,E',E)\int_{0}^{2\pi}\psi(x,\gamma(E',E,\omega)(s),E')ds dE'\nonumber\\
&
+
{\s H}_1\big((\ol{\s K}_1\psi)(x,\omega,\cdot,E)\big)(E)
-
2\pi\ 
{\p {\hat\sigma_2}{E'}}(x,E,E)\psi(x,\omega,E)
\nonumber\\
&
+\omega\cdot\nabla_x\psi+\Sigma(x,\omega,E)\psi
-(K_r\psi)(x,\omega,E)
\eea
where the term
$\sum_{|\alpha|\leq 2}a_{\alpha}(E,\omega)(\partial_{\omega}^\alpha\psi)(x,\omega,E)$ is as above in (\ref{pd-term}).
\end{theorem}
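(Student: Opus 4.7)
\textbf{Proof plan for Theorem \ref{ex-eq}.} The natural starting point is the ``pseudo-differential-like'' form (\ref{ex-0}) of $T$, which has already been established in \cite{tervo18-up}. The goal is then to expand the three ``singular'' terms
\[
\mathcal{H}_1\bigl(\tfrac{\partial(\overline{\mathcal{K}}_2\psi)}{\partial E}(x,\omega,\cdot,E)\bigr)(E), \qquad
\tfrac{\partial(\overline{\mathcal{K}}_2\psi)}{\partial E'}(x,\omega,E',E)\big|_{E'=E}
\]
by using the explicit representation (\ref{eq:ol_s_K_22_j}), namely
$(\overline{\mathcal{K}}_2\psi)(x,\omega,E',E)=\hat\sigma_2(x,E',E)\int_0^{2\pi}\psi(x,\gamma(E',E,\omega)(s),E')\,ds$. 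All the remaining terms in (\ref{ex-0}) already appear unchanged in (\ref{exth-2}), so matching is just bookkeeping.

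First I would differentiate $(\overline{\mathcal{K}}_2\psi)$ with respect to the \emph{second} energy variable $E$. Since $\psi$ itself does not carry $E$ in its last slot, the chain rule (valid for $E'\neq E$ because $\gamma$ is smooth there) together with the Leibniz rule for the prefactor $\hat\sigma_2(x,E',E)$ gives
\ben
\tfrac{\partial(\overline{\mathcal{K}}_2\psi)}{\partial E}(x,\omega,E',E)
=\tfrac{\partial\hat\sigma_2}{\partial E}(x,E',E)\int_0^{2\pi}\psi(x,\gamma,E')\,ds
+\hat\sigma_2(x,E',E)\int_0^{2\pi}\langle\nabla_S\psi(x,\gamma,E'),\tfrac{\partial\gamma}{\partial E}\rangle\,ds .
\een
Applying $\mathcal{H}_1=\mathrm{p.f.}\int_E^{E_m}\tfrac{1}{E'-E}(\cdot)\,dE'$ to this expression produces exactly the fourth and fifth lines of (\ref{exth-2}). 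The auxiliary Lemma \ref{le-reg} (together with Corollaries \ref{x-le-cor} and \ref{le-reg-cor}) guarantees the Hölder regularity in $E'$ near $E'=E$ needed for these p.f.-integrals to exist in the classical sense (\ref{c-0-a}).

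Next I would handle the diagonal term $\tfrac{\partial(\overline{\mathcal{K}}_2\psi)}{\partial E'}(x,\omega,E',E)\big|_{E'=E}$. The Leibniz rule (with the prefactor $\hat\sigma_2$) produces
\ben
\tfrac{\partial\hat\sigma_2}{\partial E'}(x,E,E)\cdot \int_0^{2\pi}\psi(x,\omega,E)\,ds
\,+\, \hat\sigma_2(x,E,E)\cdot F'(E),
\een
where $F(E')=\int_0^{2\pi}\psi(x,\gamma(E',E,\omega)(s),E')\,ds$. For $F'(E)$ I invoke Theorem \ref{ad-le-6}(ii), which gives
\ben
F'(E)=2\pi(\partial_{E'}\mu)(E,E)(\omega\cdot\nabla_S\psi)(x,\omega,E)+\sum_{|\alpha|\le 2}a_\alpha(E,\omega)(\partial_\omega^\alpha\psi)(x,\omega,E)+2\pi\tfrac{\partial\psi}{\partial E}(x,\omega,E).
\een
The first summand vanishes by (\ref{nab0}) (i.e.\ $\omega\cdot\nabla_S\psi=0$). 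Substituting the remaining two terms, and multiplying by $-1$ (because this expression enters (\ref{ex-0}) with a minus sign), yields precisely the second, third, and seventh lines of (\ref{exth-2}), namely $-2\pi\hat\sigma_2(x,E,E)\partial_E\psi$, the angular differential term $-\hat\sigma_2(x,E,E)\sum_{|\alpha|\le 2}a_\alpha(\partial_\omega^\alpha\psi)$, and the term $-2\pi\tfrac{\partial\hat\sigma_2}{\partial E'}(x,E,E)\psi$. Collecting all pieces and keeping the first ($\tfrac{\partial}{\partial E}\mathcal{H}_1(\overline{\mathcal{K}}_2\psi)$) and $\mathcal{H}_1(\overline{\mathcal{K}}_1\psi)$ terms from (\ref{ex-0}) unchanged, together with the unchanged $\omega\cdot\nabla_x\psi+\Sigma\psi-K_r\psi$, gives the announced formula (\ref{exth-2}).

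The main obstacle is the rigorous justification of the split at $E'=E$: the partial derivative $\tfrac{\partial\gamma}{\partial E}(E',E,\omega)(s)$ has a $(1-\mu^2)^{-1/2}$-type singularity as $E'\downarrow E$, so the term on line four of (\ref{exth-2}) is \emph{a priori} an improper p.f.-integral of an unbounded integrand. However, the factor $\hat\sigma_2(x,E',E)(E'-E)^{-1}$ combined with Lemma \ref{le-reg}, which gives a $|E'-E|^{1/2}$ modulus of continuity for $h(x,\omega,E',E)=\int_0^{2\pi}\langle\nabla_S\psi,\partial_E\gamma\rangle ds$, restores integrability. The second delicate point is verifying that the limits defining $a_\alpha$ in Theorem \ref{ad-le-6} and Lemma \ref{aij} genuinely collapse to the $c(E)\Delta_S$ expression of Remark \ref{rem-aij}; this is local and coordinate-dependent but purely computational.
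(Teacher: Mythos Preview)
Your proposal is correct and follows essentially the same route as the paper's own proof: start from the pseudo-differential-like form (\ref{ex-0}), expand $\partial_E(\overline{\mathcal{K}}_2\psi)$ via the chain rule to obtain the two $\mathcal{H}_1$-terms on lines four and five of (\ref{exth-2}), and evaluate $\partial_{E'}(\overline{\mathcal{K}}_2\psi)\big|_{E'=E}$ via Theorem~\ref{ad-le-6}(ii) and (\ref{nab0}) to obtain the three ``diagonal'' terms. The only minor difference is that the paper, in its Part~A, does not simply cite (\ref{ex-0}) as already known from \cite{tervo18-up} but explicitly re-verifies the regularity hypothesis needed for Lemma~\ref{h1-lemma} (namely $|\partial_{E'}f(x,\omega,E',E)-\partial_{E'}f(x,\omega,E,E)|\leq C(E'-E)^{1/2}$ for $f=\overline{\mathcal{K}}_2\psi$, using Theorem~\ref{ad-le-6} and Lemma~\ref{le-reg}); you allude to this regularity only in the context of the p.f.-integrals on lines four and five existing, whereas it is equally needed to justify taking (\ref{ex-0}) as the starting point under the stated assumptions on $\hat\sigma_2$ and $\psi$.
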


\begin{proof}

A. At first we show that (cf. \cite{tervo18-up}, section 3.2)
\bea\label{exth-3}
&
{\s H}_2\big((\ol{\s K}_2\psi)(x,\omega,\cdot,E)\big)(E) =
{\partial\over{\partial E}}\Big(
{\s H}_1\big((\ol{\s K}_2\psi)(x,\omega,\cdot,E)\big)(E)\Big)
-
{\s H}_1\big(({\p {(\ol{\s K}_2\psi)}E}(x,\omega,.,E)\big)(E)
\nonumber\\
{}&
+
{\p {(\ol{\s K}_2\psi)}{E'}}(x,\omega,E',E)_{\big|E'=E}.
\eea
Let
\[
f(x,\omega,E',E):=(\ol{\s K}_2\psi)(x,\omega,E',E).
\]
Then
\bea\label{exth-4}
&
{\p f{E'}}
={\p {\hat\sigma_2}{E'}}(x,E',E)\int_{0}^{2\pi}\psi(x,\gamma(E',E,\omega)(s),E')ds\nonumber\\
&
+
\hat\sigma_2(x,E',E)\int_{0}^{2\pi}\la \nabla_S\psi(x,\gamma(E',E,\omega)(s),E'),{\p \gamma{E'}}(E',E,\omega)(s)\ra ds\nonumber\\
&
+
\hat\sigma_2(x,E',E)\int_{0}^{2\pi}{\p \psi{E'}}(x,\gamma(E',E,\omega)(s),E')ds.
\eea
Since $\hat\sigma_2\in C(\ol G, C^{2}(I'\times  I))$ and $\psi\in C(\ol G,C^2(I,C^3(S)))$ we find
in due to Theorem \ref{ad-le-6} and Lemma \ref{e-th-1}  that $f\in C(\ol G\times S,C^1(I'\times I))$ and for $E'>E$
\[
\Big|{\p f{E'}}(x,\omega,E',E)-{\p f{E'}}(x,\omega,E,E)\Big|\leq C(E'-E)^{1\over 2}
\]
(we omit here further details).
Thus  the relation (\ref{exth-3}) is valid (see the note after  Lemma \ref{h1-lemma}).

B.
We have for $E'>E$
\bea\label{exth-4-a}
&
{\p {(\ol{\s K}_2\psi)}E}(x,\omega,E',E)
={\p {\hat\sigma_2}{E}}(x,E',E)\int_{0}^{2\pi}\psi(x,\gamma(E',E,\omega)(s),E')ds\nonumber\\
&
+
\hat\sigma_2(x,E',E)\int_{0}^{2\pi}\la \nabla_S\psi(x,\gamma(E',E,\omega)(s),E'),{\p \gamma{E}}(E',E,\omega)(s)\ra ds
\eea
and similarly recalling that $\gamma(E,E,\omega)(s)=\omega$
\bea\label{exth-5}
&
{\p {(\ol{\s K}_2\psi)}{E'}}(x,\omega,E',E)_{\big|E'=E}
=
2\pi\ 
{\p {\hat\sigma_2}{E'}}(x,E,E)\psi(x,\omega,E')\nonumber\\
&
+
\hat\sigma_2(x,E,E)\Big(\int_{0}^{2\pi}\la \nabla_S\psi(x,\gamma(E',E,\omega)(s),E'),{\p \gamma{E'}}(E',E,\omega)(s)\ra ds\Big)_{|E'=E}
\nonumber\\
&
+
2\pi\ 
\hat\sigma_2(x,E,E){\p \psi{E}}(x,\omega,E).
\eea
In virtue of (\ref{var-for-14})
\bea\label{exth-6}
&
\Big(\int_{0}^{2\pi}\la \nabla_S\psi(x,\gamma(E',E,\omega)(s),E'),{\p \gamma{E'}}(E',E,\omega)(s)\ra ds\Big)_{|E'=E}
\nonumber\\
&
=
2\pi\ (\partial_{E'}\mu)(E,E)(\omega\cdot\nabla_S\psi)(x,\omega,E)
+
\sum_{|\alpha|\leq 2}a_{\alpha}(E,\omega)(\partial_{\omega}^\alpha\psi)(x,\omega,E).
\eea
Recall that by (\ref{nab0})
\[
(\omega\cdot\nabla_S\psi)(x,\omega,E)=0.
\]
Hence the assertion follows by combining
(\ref{exth-3}), (\ref{exth-4-a}), (\ref{exth-5}) and (\ref{exth-6}).

\end{proof}

\vspace{4\baselineskip}

\subsection{Related formal adjoints}\label{ad-sec}

\subsubsection{Formal adjoints of restricted collision and convection-scattering   operators}

We assume
that  the restricted collision operator is  the sum
\be\label{esols1} 
K_r=K^1+K^2+K^3.
\ee
Here $K^1$ is of the form 
\[
(K^1\psi)(x,\omega,E)=\int_{S'\times I'}\sigma^1(x,\omega',\omega,E',E)\psi(x,\omega',E')d\omega' dE',  
\]
where $\sigma^1:G\times S^2\times I^2\to\R$ is a non-negative measurable function such that 
\bea\label{ass5-a}
&\int_{S'\times I'}\sigma^1(x,\omega',\omega,E',E)d\omega' dE'\leq M_1,\nonumber\\
&\int_{S'\times I'}\sigma^1(x,\omega,\omega',E,E')d\omega' dE'\leq M_2,
\eea
for a.e. $(x,\omega,E)\in G\times S\times I$. $K^1$ is related e.g. to the Bremsstrahlung.

The operator
$K^2$ is of the form 
\[
(K^2\psi)(x,\omega,E)=\int_{ S'}\sigma^2(x,\omega',\omega,E)\psi(x,\omega',E) d\omega',  
\]
where $\sigma^2:G\times S^2\times I\to\R$ is a non-negative  measurable function  such that
\bea\label{ass7}
&\int_{S'}\sigma^2(x,\omega',\omega,E)d\omega'\leq M_1,\nonumber\\
&\int_{S'}\sigma^2(x,\omega,\omega',E) d\omega'\leq M_2,
\eea
for a.e. $(x,\omega,E)\in G\times S\times I$. $K^2$ models the elastic scattering.

Finally, $K^3$ is of the form 
\[
(K^3\psi)(x,\omega,E)
=
\int_{I'}\int_{0}^{2\pi}
\hat\sigma^3(x,E',E)
\psi(x,\gamma(E',E,\omega)(s),E')ds dE'
\]
where
$\hat{\sigma}^3:G\times I^2\to\R$ is a non-negative measurable function such that
\bea\label{ass-8}
&\int_{I'}\hat{\sigma}^3(x,E',E)dE'\leq M_1, \nonumber\\
&\int_{I'}\hat{\sigma}^3(x,E,E')dE'\leq M_2,
\eea
for a.e. $(x,E)\in G\times I$. 
$K^3$ relates e.g. to the M\o ller scattering.

Recall that the restricted collision operator $K_r$ is a bounded operator
$L^2(G\times S\times I)\to L^2(G\times S\times I)$ since it obeys the (partial) \emph{Schur criterion for boundedness} (\cite{tervo18}, \cite{tervo18-up}). Furthermore, we have

\begin{theorem}\label{Kr-adjoint}
The adjoint $K_r^*$ of $K_r=K^1+K^2+K^3$ is given by
\bea
& 
(K_r^*v)(x,\omega,E)=
\int_{S'\times I'}\sigma^1(x,\omega,\omega',E,E')v(x,\omega',E')d\omega' dE'
+
\int_{S'}\sigma^2(x,\omega,\omega',E)v(x,\omega',E)d\omega' \nonumber\\
&
+
\int_{ I'}\int_0^{2\pi}\hat\sigma^3(x,E,E')v(x,\gamma(E,E',\omega)(s),E')ds  dE'.
\eea

\end{theorem}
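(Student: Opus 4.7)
The plan is to compute $\langle K_r\psi,v\rangle_{L^2(G\times S\times I)}$ by Fubini and produce the adjoint from the identity $\langle K_r\psi,v\rangle=\langle\psi,K_r^\ast v\rangle$. Because $K_r=K^1+K^2+K^3$ is a finite sum it suffices to treat each summand separately. For each term I first need to justify Fubini: the Schur conditions (\ref{ass5-a}), (\ref{ass7}), (\ref{ass-8}) together with $\psi,v\in L^2$ guarantee absolute integrability of the four-fold integrands (by the usual Cauchy--Schwarz trick $|\psi(x,\omega',E')v(x,\omega,E)|\le\tfrac12(\psi^2+v^2)$ combined with the Schur bounds), so the iterated integrals may be rearranged freely.

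For $K^1$, once Fubini is applied, a plain relabelling $(\omega,E)\leftrightarrow(\omega',E')$ in the outer and inner integrals produces
\ben
\langle K^1\psi,v\rangle
=\int_{G\times S\times I}\psi(x,\omega,E)\Big(\int_{S'\times I'}\sigma^1(x,\omega,\omega',E,E')v(x,\omega',E')d\omega'dE'\Big)dxd\omega dE,
\een
which gives the claimed expression for $(K^1)^\ast v$. The argument for $K^2$ is identical but drops the energy integral, using (\ref{ass7}) instead of (\ref{ass5-a}).

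The substantive step is $K^3$, where the angular variable $\omega'$ appears inside the curve parametrization $\gamma(E',E,\omega)(s)$ rather than as an independent integration variable. After bringing the $\int_S d\omega$ to the inside via Fubini, I apply Corollary \ref{sp-ch-le} pointwise in $(x,E',E)$ with $f(\omega,\omega')=\hat\sigma^3(x,E',E)\psi(x,\omega',E')v(x,\omega,E)$ to exchange the roles of $\omega$ and $\omega'$:
\ben
\int_S\int_0^{2\pi}\hat\sigma^3(x,E',E)\psi(x,\gamma(E',E,\omega)(s),E')v(x,\omega,E)\,ds\,d\omega
=\int_{S'}\psi(x,\omega',E')\int_0^{2\pi}\hat\sigma^3(x,E',E)v(x,\gamma(E',E,\omega')(s),E)\,ds\,d\omega'.
\een
Then interchanging the order of the remaining $dE\,dE'$ integrations and finally relabelling $E\leftrightarrow E'$ in the outer variables yields exactly the stated formula for $(K^3)^\ast v$.

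The main potential obstacle is the legitimacy of Corollary \ref{sp-ch-le} at the $L^2$-level: the corollary is stated for continuous $\psi,v$, whereas here $\psi,v$ are merely $L^2$. The standard remedy is a density argument. Choose sequences $\psi_n,v_n\in C(\ol G\times S\times I)$ converging to $\psi,v$ in $L^2$; Corollary \ref{sp-ch-le} gives the required symmetry for the regularised integrand, and the Schur bound (\ref{ass-8}) together with the $L^2$-boundedness of $K^3$ (established in \cite{tervo18-up}, section~5.4) allows passage to the limit on both sides. Granting this, the identity $\langle K_r\psi,v\rangle=\langle\psi,K_r^\ast v\rangle$ holds for all $\psi,v\in L^2(G\times S\times I)$ with $K_r^\ast$ given by the sum of the three expressions above, completing the proof.
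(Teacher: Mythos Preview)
Your proof is correct and follows essentially the same route as the paper: split $K_r$ into its three summands, handle $K^1,K^2$ by straightforward Fubini plus relabelling, and for $K^3$ invoke Corollary~\ref{sp-ch-le} to swap the roles of $\omega$ and $\omega'$ before the final energy relabelling. The paper's proof computes only $(K^3)^*$ explicitly (leaving $(K^1)^*,(K^2)^*$ as ``similarly computed'') and applies Corollary~\ref{sp-ch-le} directly to $\psi,v\in L^2$ without comment; your added density argument to bridge the $C(\ol G\times S\times I)$ hypothesis of Corollary~\ref{sp-ch-le} to the $L^2$ setting is a legitimate refinement rather than a deviation.
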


\begin{proof}
The adjoint $K_r^*$ is $(K^1)^*+(K^2)^*+(K^3)^*$. We compute $(K^3)^*$. 
The adjoints $(K^1)^*,\ (K^2)^*$ are similarly computed.
We have by Corollary \ref{sp-ch-le} for $\psi,\ v \in L^2(G\times S\times I)$
\bea\label{Kr-3}
&
\la K^3\psi,v\ra_{L^2(G\times S\times I)}
=
\int_G\int_S\int_I(K^3\psi)(x,\omega,E)v(x,\omega,E) dE d\omega dx\nonumber
\\
&
=
\int_G\int_S\int_I\int_{I'}\int_0^{2\pi}\hat\sigma^3(x,E',E)\psi(x,\gamma(E',E,\omega)(s),E')v(x,\omega,E)ds dE' dE d\omega dx\nonumber\\
&
=
\int_G\int_I\int_{I'}\hat\sigma^3(x,E',E)\Big(\int_S\int_0^{2\pi}\psi(x,\gamma(E',E,\omega)(s),E')v(x,\omega,E)ds d\omega\Big)dE' dE  dx\nonumber\\
&
=
\int_G\int_I\int_{I'}\hat\sigma^3(x,E',E)\Big(\int_{S'}\int_0^{2\pi}\psi(x,\omega',E')v(x,\gamma(E',E,\omega')(s),E)ds d\omega'\Big)dE' dE  dx\nonumber\\
&
=
\int_G\int_{S'}\int_{I'}\psi(x,\omega',E')\Big(\int_{I}\int_0^{2\pi}\hat\sigma^3(x,E',E)v(x,\gamma(E',E,\omega')(s),E)ds dE \Big)dE' d\omega'  dx
\eea
which completes the proof.
\end{proof}

Let
\be\label{sigma}
\Sigma\in L^\infty(G\times S\times I).
\ee
Define
\[
(A_0\psi)(x,\omega,E):=\omega\cdot\nabla_x\psi+\Sigma\psi-K_r\psi.
\]
Then the formal adjoint of $A_0$ that is,
\[
(A_0^*v)(x,\omega,E)=-\omega\cdot\nabla_xv+\Sigma^*v-K_r^*v
\]
where $\Sigma^*=\Sigma$ and $v\in C^1_0(G\times S\times I^\circ)$.

\subsubsection{Formal adjoints of Hadamard finite part integral operators}\label{ad-hada-op}

Let
\[
&
(A_1\psi)(x,\omega,E):={\s H}_1\big((\ol {\s K}_1\psi)(x,\omega,\cdot,E)\big)(E)\nonumber\\
&
=
{\rm p.f.}\int_E^{E_m}{1\over{E'-E}}\hat\sigma_1(x,E',E)\int_0^{2\pi}\psi(x,\gamma(E',E,\omega)(s),E')ds dE'
\]
where $\psi\in C(\ol G\times S,C^\alpha(I))\cap C(\ol G\times I,C^\alpha(S))$.

\begin{theorem}\label{adjointA1-th} 
Suppose that $\hat\sigma_1\in C(\ol G,C^\alpha(I^2)),\ \alpha>0$. Then the formal adjoint of $A_1$,
\be\label{var-for-16-d}
(A_1^*v)(x,\omega',E'):=
{\rm p.f.}\int_{E_0}^{E'}{1\over{E'-E}}\hat\sigma_1(x,E',E)
\int_0^{2\pi}
v(x,\gamma(E',E,\omega')(s),E)dsdE
\ee
for $v\in C(\ol G,C^\alpha(S\times I))$. 
\end{theorem}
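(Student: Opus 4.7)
The plan is to verify the defining relation
\[
\langle A_1\psi, v\rangle_{L^2(G\times S\times I)} = \langle \psi, A_1^*v\rangle_{L^2(G\times S\times I)}
\]
by carrying out two successive Fubini-type swaps: first interchange the order of the $dE$ and $dE'$ integrations using Lemma \ref{ad-le-1}, then interchange the roles of $\omega$ and $\omega'$ using Corollary \ref{sp-ch-le}.

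Concretely, I would insert the definition of $A_1\psi$ into $\langle A_1\psi,v\rangle$ and, with $x$ and $\omega$ held fixed, apply Lemma \ref{ad-le-1} to the function
\[
f_{x,\omega}(E',E) := \hat\sigma_1(x,E',E)\,v(x,\omega,E)\int_0^{2\pi}\psi(x,\gamma(E',E,\omega)(s),E')\,ds.
\]
This turns $\int_I \mathrm{p.f.}\!\int_E^{E_m}\frac{1}{E'-E}f_{x,\omega}(E',E)\,dE'\,dE$ into $\int_{I'} \mathrm{p.f.}\!\int_{E_0}^{E'}\frac{1}{E'-E}f_{x,\omega}(E',E)\,dE\,dE'$. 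After the Hadamard integration has been performed in the inner slot the remaining iterated integral over $(x,\omega,E')$ is absolutely convergent, so one may freely use ordinary Fubini to move the $d\omega$ integration inside the $dE'$ integration. Corollary \ref{sp-ch-le} then applies pointwise in $(x,E',E)$ to give
\[
\int_S\int_0^{2\pi}\psi(x,\gamma(E',E,\omega)(s),E')\,v(x,\omega,E)\,ds\,d\omega = \int_{S'}\int_0^{2\pi}\psi(x,\omega',E')\,v(x,\gamma(E',E,\omega')(s),E)\,ds\,d\omega'.
\]
Reassembling in the order $\int_G\int_{S'}\int_{I'}\psi(x,\omega',E')(\cdots)\,dE'\,d\omega'\,dx$ identifies the parenthetical expression as $(A_1^*v)(x,\omega',E')$ with the claimed formula.

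The main technical obstacle is verifying the hypothesis $f_{x,\omega}\in C^\beta(I^2)$ (for some $\beta>0$) needed to invoke Lemma \ref{ad-le-1}. The $\hat\sigma_1$-factor is jointly Hölder by assumption, and $v(x,\omega,\cdot)$ is $\alpha$-Hölder in $E$ for fixed $(x,\omega)$ by $v\in C(\ol G, C^\alpha(S\times I))$. The delicate factor is the $ds$-integral: its Hölder continuity in $E'$ (for fixed $E$) is supplied by Corollary \ref{x-le-cor} with exponent $\alpha/2$, and Hölder continuity in $E$ (for fixed $E'$) follows by the same type of argument, exploiting the smooth dependence of $\gamma(E',E,\omega)(s)$ on $E$ (cf.\ the estimate (\ref{e-10})) together with the $\alpha$-Hölder continuity of $\psi(x,\cdot,E')$ on $S$. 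A minor subsidiary issue --- moving $d\omega$ past the Hadamard p.f.\ integral at the end --- is already covered by the paper's formula (\ref{var-for-12}), so no further bookkeeping is required there.
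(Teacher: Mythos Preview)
Your proposal is correct and uses the same ingredients as the paper's proof (Lemma \ref{ad-le-1}, Corollary \ref{sp-ch-le}, and formula (\ref{var-for-12})); the only difference is that the paper swaps the angular variables first (via (\ref{var-for-12}) and Corollary \ref{sp-ch-le}) and the energy variables second (via Lemma \ref{ad-le-1}), whereas you do it in the reverse order. Your explicit verification of the H\"older hypothesis needed for Lemma \ref{ad-le-1} is more careful than the paper's, which leaves that check implicit.
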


\begin{proof}

By    (\ref{var-for-12}) and by Corollary \ref{sp-ch-le}  we have
\bea\label{v-3}
&
\la  A_1\psi
,v \ra_{L^2(G\times S\times I)}\nonumber\\
&
=
\int_G\int_S\int_I\Big({\rm p.f.}\int_E^{E_m}{1\over{E'-E}}\hat\sigma_1(x,E',E) \nonumber\\
&
\cdot
\int_0^{2\pi}\psi(x,\gamma(E',E,\omega)(s),E')ds dE'\Big)
v(x,\omega,E) dE d\omega dx\nonumber\\
&
=
\int_G\int_I\Big({\rm p.f.}\int_E^{E_m}{1\over{E'-E}}\hat\sigma_1(x,E',E) \nonumber\\
&
\cdot
\Big[\int_S\int_0^{2\pi}\psi(x,\gamma(E',E,\omega)(s),E')v(x,\omega,E)dsd\omega\Big] dE'\Big)
 dE  dx\nonumber\\
&
=
\int_G\int_I\Big({\rm p.f.}\int_E^{E_m}{1\over{E'-E}}\hat\sigma_1(x,E',E) \nonumber\\
&
\cdot
\Big[\int_{S'}\int_0^{2\pi}\psi(x,\omega',E')v(x,\gamma(E',E,\omega')(s),E)dsd\omega'\Big] dE'\Big)
 dE  dx\nonumber\\ 
&
=
\int_G\int_I\Big({\rm p.f.}\int_E^{E_m}{1\over{E'-E}}\hat\sigma_1(x,E',E)f(x,E',E) dE'\Big)
dE  dx 
\eea
where
\[
f(x,E',E):=
\int_{S'}\int_0^{2\pi}\psi(x,\omega',E')v(x,\gamma(E',E,\omega')(s),E)dsd\omega'.
\]
Hence by Lemma \ref{ad-le-1} and by (\ref{var-for-12}) we obtain 
\bea\label{v-3a}
&
\la A_1\psi
,v \ra_{L^2(G\times S\times I)}
=
\int_G\int_{I'}\Big({\rm p.f.}\int_{E_0}^{E'}{1\over{E'-E}}\hat\sigma_1(x,E',E)f(x,E',E) 
dE\Big) dE'  dx\nonumber\\
&
=
\int_G\int_{I'}\Big({\rm p.f.}\int_{E_0}^{E'}{1\over{E'-E}}\hat\sigma_1(x,E',E) \nonumber \\
&
\cdot
\int_{S'}\int_0^{2\pi}
\psi(x,\omega',E')v(x,\gamma(E',E,\omega')(s),E)dsd\omega' dE\Big) dE' dx
\nonumber\\
&
=
\int_G\int_{S'}\int_{I'}\psi(x,\omega',E')\Big({\rm p.f.}\int_{E_0}^{E'}{1\over{E'-E}}\hat\sigma_1(x,E',E)
\nonumber \\
&
\cdot
\int_0^{2\pi}
v(x,\gamma(E',E,\omega')(s),E)dsdE\Big) dE' d\omega' dx
\nonumber\\
&
=
\la \psi
,A_1^* v \ra_{L^2(G\times S\times I)}
\eea
where $A_1^*$ is 
\[
(A_1^*v)(x,\omega',E'):=
{\rm p.f.}\int_{E_0}^{E'}{1\over{E'-E}}\hat\sigma_1(x,E',E)
\int_0^{2\pi}
v(x,\gamma(E',E,\omega')(s),E)dsdE.
\]
This completes the proof
\end{proof}

Finally, define (the rest of the transport operator (\ref{exth-2}))
\bea\label{A2}
&
(A_2\psi)(x,\omega,E):= -
{\partial\over{\partial E}}\Big(
{\s H}_1\big((\ol{\s K}_2\psi)(x,\omega,\cdot,E)\big)(E)\Big)
-
2\pi\ 
\hat\sigma_2(x,E,E){\p \psi{E}}(x,\omega,E)\nonumber\\
&
-\hat\sigma_2(x,E,E)
\sum_{|\alpha|\leq 2}a_{\alpha}(E,\omega)(\partial_{\omega}^\alpha\psi)(x,\omega,E)\nonumber\\
&
+{\rm p.f.}\int_E^{E_m}{1\over{E'-E}}
\hat\sigma_2(x,E',E)\int_{0}^{2\pi}\la \nabla_S\psi(x,\gamma(E',E,\omega)(s),E'),{\p \gamma{E}}(E',E,\omega)(s)\ra ds dE'
\nonumber\nonumber\\
&
+
{\rm p.f.}\int_E^{E_m}{1\over{E'-E}}
{\p {\hat\sigma_2}{E}}(x,E',E)\int_{0}^{2\pi}\psi(x,\gamma(E',E,\omega)(s),E')ds dE'\nonumber\\
&
-
2\pi\ 
{\p {\hat\sigma_2}{E'}}(x,E,E)\psi(x,\omega,E').
\eea
The formal adjoint $A_2^*$ can be computed by techniques utilized in the next section \ref{var-for} and it is (see Theorem \ref{var-th-a})
\be\label{ad-a2}
(A_2^*v)(x,\omega',E')
=
-
{\rm p.f.}\int_{E_0}^{E'}{1\over{(E'-E)^2}}
\hat{\sigma}_{2}(x,E',E) 
\cdot
\int_{0}^{2\pi}v(x,\gamma(E',E,\omega)(s),E)ds  dE
\ee
for appropriates $v$.

The formal adjoint of $T$ is given by
\[
T^*=A_0^*+A_1^*+A_2^*.
\]

\section{Variational formulation of the transport problem}\label{var-for}

In this section we shall give a weak form, so called variational formulation,  of the hyper-singular transport problem for M\o ller-type interactions. The obtained final weak form (the combination of Theorems \ref{var-th-a} and \ref{final-bi-form}) decreases the level of singularities in integration containing only singularities of order one that is, the singularities of the form ${1\over{E'-E}}dE dE'$. The variational formulation is an essential step in order to show the existence of solutions by Lions-Lax-Milgram Theorem based methods. Additionally, it gives a platform needed for Galerkin finite element  methods.

We treatise a variational formulation of the initial inflow boundary value transport problem
\be\label{var-for-1-a}
T\psi =f,
\ee
\be\label{var-for-2-a}
\psi_{|\Gamma_-}=g,
\ee
\be\label{var-for-3-a}
\psi(.,.,E_m)=0
\ee
where $T$ is given by (\ref{exth-1}) or equivalently by (\ref{exth-2}).
For some partial integration techniques we additionally require that
\be\label{var-for-3-b}
\lim_{E\to E_m}\ln(E_m-E)\psi(x,\omega,E)=0
\ee
which is a stronger requirement than (\ref{var-for-3-a}). Assuming that $\psi\in C(\ol G\times S,C^1(I))$ for which $\psi(.,.,E_m)=0$ we have
\[
&
\lim_{E\to E_m}\ln(E_m-E)\psi(x,\omega,E)=
\lim_{E\to E_m}(E_m-E)\ln(E_m-E){{\psi(x,\omega,E)-\psi(x,\omega,E_m)}\over{E_m-E}}\\
&
=-0\cdot {\p \psi{E}}(.,.,E_m)
=0
\]
since by the L'Hospital's rule $\lim_{x\to 0^+}x\ \ln(x)=\lim_{x\to 0^+} {{\ln(x)}\over{1\over x}}=0$. Hence that condition (\ref{var-for-3-b}) holds for sufficient regular  $\psi$ which obeys (\ref{var-for-3-a}).
Moreover, we assume that $f\in L^2(G\times S\times I)$ and $g\in T^2(\Gamma)$.
The basic principle is to require that
\be\label{v-1}
\la T\psi,v\ra_{L^2(G\times S\times I)}=\la f,v\ra_{L^2(G\times S\times I)},\
v\in \mc D_{(0)}
\ee
for the solution $\psi$ which satisfies the conditions (\ref{var-for-2-a}) and (\ref{var-for-3-a}).  Here $\mc D_{(0)}$ is  the space of test functions 
\[
\mc D_{(0)}:=\{v\in C^1(\ol G,C^2(I,C^3(S)))|\ v(x,\omega,E_0)=0\}.
\]
Since $v\in C(\ol G\times S,C^1(I))$ we see that even the condition 
\be\label{limv}
\lim_{E'\to E_0}\ln(E'-E_0)v(x,\omega,E')=0
\ee
holds for $v$.
At first, the requirement (\ref{v-1}) is imposed in the classical sense that is, $\psi$ and $v$ are assumed to be smooth enough. The final weak formulation follows by appropriate extensions to Sobolev spaces (cf. \cite{tervo18-up}, section 6).

\begin{remark}
We emphasize that in real problems the modelling often comprises a coupled system of transport equations. However, the below computations (for a single equation) form an essential part of variational formulations.

\end{remark}

\subsection{Basic variational formulation}\label{basic-var-for}

Let $f_-$ ($f_+$) be the negative (positive) part of a function. Recall that
\be
f=f_+-f_-\quad {\rm and}\quad |f|=f_++f_-.
\ee
We prove

\begin{theorem}\label{var-th-a}
Suppose that the assumptions (\ref{sigma}), (\ref{ass5-a}), (\ref{ass7}), (\ref{ass-8}) are valid and that $\hat\sigma_1\in C(\ol G,C^1(I\times I'))$, $\hat\sigma_2\in C(\ol G, C^{2}(I\times I')$. 
Furthermore,
suppose that  $\psi\in C^1(\ol G,C^2( I,C^3(S)))$ 
is a solution of the problem (\ref{var-for-1-a}),  (\ref{var-for-2-a}), (\ref{var-for-3-a}).
Then 
\be\label{var-form} 
B(\psi,v)=F(v),\ v\in \mc D_{(0)}.
\ee
Here the bilinear form $B(.,.,)$ is
\be\label{fb}
B(\psi,v)=B_2(\psi,v)+B_1(\psi,v)+B_0(\psi,v)
\ee
where
\be\label{b0-a}
B_0(\psi,v):=
\la\psi,-\omega\cdot\nabla_x v+\Sigma^*v-K_r^* v\ra_{L^2(G\times S\times I)}
+\int_{\Gamma_+}(\omega\cdot\nu)_+\psi v\ d\sigma d\omega dE,
\ee
\bea\label{b1-a}
B_1(\psi,v)&:=
\int_G\int_{S'}\int_{I'}\psi(x,\omega',E')\cdot\nonumber\\
&
\Big({\rm p.f.}\int_{E_0}^{E'}{1\over{E'-E}}\hat\sigma_1(x,E',E)
\int_0^{2\pi}
v(x,\gamma(E',E,\omega')(s),E)dsdE\Big) dE' d\omega' dx
\eea
and
\bea\label{b2-a}
&
B_2(\psi,v)
:=
-\int_G\int_S\int_{I'}\psi(x,\omega,E')
\Big(
{\rm p.f.}\int_{E_0}^{E'}{1\over{(E'-E)^2}}
\hat{\sigma}_{2}(x,E',E) 
\nonumber \\
&
\cdot
\int_{0}^{2\pi}v(x,\gamma(E',E,\omega)(s),E)ds  dE\Big)dE' d\omega dx.
\eea
The linear form $F$ is
\be\label{lin-form}
F(v):=\la { f},v\ra_{L^2(G\times S\times I)}+\int_{\Gamma_-}(\omega\cdot\nu)_-{ g} v\ d\sigma d\omega dE.
\ee
\end{theorem}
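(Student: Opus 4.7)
Since the refined form (\ref{exth-2}) and the original hyper-singular form (\ref{exth-1}) of $T$ coincide by Theorem \ref{ex-eq}, I would work with the simpler original decomposition
\[
T\psi = A_0\psi + A_1\psi + A_2\psi,
\]
where $A_0\psi := \omega\cdot\nabla_x\psi + \Sigma\psi - K_r\psi$, $A_1\psi := {\s H}_1\big((\ol{\s K}_1\psi)(x,\omega,\cdot,E)\big)(E)$, and $A_2\psi := -{\s H}_2\big((\ol{\s K}_2\psi)(x,\omega,\cdot,E)\big)(E)$. I then compute each pairing $\la A_j\psi,v\ra_{L^2(G\times S\times I)}$ separately, identify them with $B_0(\psi,v)$, $B_1(\psi,v)$, and $B_2(\psi,v)$, and absorb the inflow boundary data into $F(v)$.

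\textbf{The two easier pieces.} For the $A_0$-piece I apply Green's formula (\ref{green}) to $\omega\cdot\nabla_x\psi$, split the boundary integral as $\int_\Gamma(\omega\cdot\nu)\psi v\,d\sigma d\omega dE = \int_{\Gamma_+}(\omega\cdot\nu)_+\psi v - \int_{\Gamma_-}(\omega\cdot\nu)_-\psi v$, and substitute $\psi_{|\Gamma_-}=g$ to push the known inflow piece to the right-hand side. Using that $\Sigma$ is self-adjoint as a multiplication operator and moving $K_r$ to the test function via Theorem \ref{Kr-adjoint}, I obtain
\[
\la A_0\psi,v\ra = B_0(\psi,v) - \int_{\Gamma_-}(\omega\cdot\nu)_-\, g\, v\, d\sigma d\omega dE,
\]
and the boundary contribution becomes the boundary term in $F(v)$. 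For the $A_1$-piece, Theorem \ref{adjointA1-th} applies verbatim, giving $\la A_1\psi,v\ra = \la\psi,A_1^*v\ra = B_1(\psi,v)$; the regularity hypotheses on $\hat\sigma_1$, $\psi$, and $v\in\mc D_{(0)}$ are precisely those required by that theorem.

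\textbf{The $A_2$-piece and the main obstacle.} First I commute $\int_S$ with the second-order Hadamard integral using Lemma \ref{ad-le-5}, and then exchange $\omega\leftrightarrow\omega'$ by Corollary \ref{sp-ch-le} to pull $\psi(x,\omega',E')$ outside the Hadamard operator. This recasts the pairing as
\[
\la A_2\psi,v\ra = -\int_G\!\int_I\!{\rm p.f.}\!\int_E^{E_m}\!\frac{\hat\sigma_2(x,E',E)}{(E'-E)^2}\,H(x,E',E)\,dE'\,dE\,dx,
\]
where $H(x,E',E) := \int_{S'}\!\int_0^{2\pi}\psi(x,\omega',E')v(x,\gamma(E',E,\omega')(s),E)\,ds\,d\omega'$. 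The crux of the proof is a Fubini-type identity for the second-order Hadamard,
\[
\int_I {\rm p.f.}\!\int_E^{E_m}\!\frac{F(x,E',E)}{(E'-E)^2}\,dE'\,dE \;=\; \int_{I'} {\rm p.f.}\!\int_{E_0}^{E'}\!\frac{F(x,E',E)}{(E'-E)^2}\,dE\,dE',
\]
applied with $F=\hat\sigma_2 H$. I would prove this by mimicking Lemma \ref{ad-le-1}: unfold both Hadamard integrals via (\ref{def-h2}), (\ref{def-h2-a}), apply ordinary Fubini on the absolutely convergent region $\{|E'-E|>\epsilon\}$, and compare $\epsilon$-corrections. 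The $\frac{1}{\epsilon}$-contributions $F(x,E^+,E)$ and $F(x,E',E'^-)$ agree on the diagonal by continuity of $F$ and cancel, while the $\ln(\epsilon)$-contributions combine into $\ln(\epsilon)\int_I\frac{d}{dE}[F(x,E,E)]\,dE = \ln(\epsilon)\bigl(F(x,E_m,E_m)-F(x,E_0,E_0)\bigr)$, which vanishes because $\psi(\cdot,\cdot,E_m)=0$ kills $F(x,E_m,E_m)$ and $v(\cdot,\cdot,E_0)=0$ kills $F(x,E_0,E_0)$. The delicate technical point is ensuring that the residual improper integrands are genuinely Lebesgue integrable: this relies on the diagonal H\"older estimates of Corollaries \ref{x-le-cor} and \ref{le-reg-cor} to control $H$ near $E'=E$, together with the strengthened decay conditions (\ref{var-for-3-b}) and (\ref{limv}) at the energy endpoints to ensure that the $\ln$-type contributions generated by the underlying partial integrations (cf.\ Lemmas \ref{ad-le-3}, \ref{ad-le-3-aa}) remain finite. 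Once this exchange is justified, the rearranged expression for $\la A_2\psi,v\ra$ is exactly $B_2(\psi,v)$, and summing the three contributions yields (\ref{var-form}).
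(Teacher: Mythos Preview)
Your treatment of $B_0$ and $B_1$ matches the paper's exactly. Your treatment of $B_2$ is correct but takes a genuinely different route.

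\textbf{Comparison for $B_2$.} The paper does \emph{not} prove a second-order Hadamard Fubini identity. Instead it reduces to first order via Lemma~\ref{ad-le-3} (integration by parts in $E'$ inside the p.f.~integral, killing the $E_m$-boundary term by $\psi(\cdot,\cdot,E_m)=0$), then applies the first-order Fubini Lemma~\ref{ad-le-1}. This produces a term containing $\partial_{E'}\psi$, which is removed by an outer integration by parts in $E'$ (using Lemma~\ref{le-for-w} and $v(\cdot,\cdot,E_0)=0$ at the lower endpoint), followed by Lemma~\ref{hadale} to differentiate the resulting p.f.~integral. A long list of terms appears; most cancel pairwise, and a residual second-order angular operator $\sum_{|\alpha|\le 2}b_\alpha(\omega,E)\partial_\omega^\alpha v$ is shown to vanish via the M\o ller-specific identity $(\partial_{E'}\mu+\partial_E\mu)(E,E)=0$.

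Your approach proves the second-order Fubini directly by matching the $\epsilon$-regularizations: the $1/\epsilon$ diagonal corrections coincide, and the $\ln\epsilon$ corrections differ by $\int_I \frac{d}{dE}F(E,E)\,dE = F(E_m,E_m)-F(E_0,E_0)$, which vanishes by the two initial conditions. This is cleaner and, notably, never invokes the identity $\partial_{E'}\mu+\partial_E\mu=0$; the result holds for any $\mu$ with the stated regularity. The paper's detour generates spurious terms only to cancel them. Two small points to complete your argument: you also need the $S'$-analogue of Lemma~\ref{ad-le-5} (commuting $\int_{S'}$ with the second-order p.f.\ in $E$) to extract $\psi(x,\omega',E')$ and arrive at the exact form (\ref{b2-a}); and the dominated-convergence step for the $\epsilon$-regularized integrands should be spelled out with a $C^{1+\alpha}$ Taylor expansion of $F$ in each energy variable, paralleling the paper's proof of Lemma~\ref{ad-le-5}.
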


\begin{proof}
A.
Utilizing the above notations we have $T=A_0+A_1+A_2$ and so
\bea\label{fb-a}
&
\la T\psi
, v \ra_{L^2(G\times S\times I)}=\la A_0\psi,v\ra_{L^2(G\times S\times I)}
+\la A_1\psi
, v \ra_{L^2(G\times S\times I)}+
\la A_2\psi
, v \ra_{L^2(G\times S\times I)}\nonumber\\
&
=B_0(\psi,v)+B_1(\psi,v)+B_2(\psi,v)
\eea
where $B_j(\psi,v):=\la A_j\psi,v\ra_{L^2(G\times S\times I)}$. Note that for $j=1,2$
\be\label{Aj-a}
(A_j\psi)(x,\omega,E)=-{\s H}_j\big((\ol{\s K}_j\psi)(x,\omega,\cdot,E)\big)(E).
\ee
As in \cite{tervo18-up}, section 6,  we see that
\bea\label{b0-b}
&
B_0(\psi,v)=
\la A_0\psi,v\ra_{L^2(G\times S\times I)}
=\la\psi,-\omega\cdot\nabla_x v+\Sigma^*v-K_r^* v\ra_{L^2(G\times S\times I)}\nonumber\\
&
+\int_{\Gamma_+}(\omega\cdot\nu)_+\psi v\ d\sigma d\omega dE 
-\int_{\Gamma_-}(\omega\cdot\nu)_-g v\ d\sigma d\omega dE
\eea
By the proof of Theorem \ref{adjointA1-th} (see (\ref{v-3a}))
\bea\label{b1-b}
&
B_1(\psi,v)=
\la A_1\psi,v\ra_{L^2(G\times S\times I)}
=\la \psi,A_1^*v\ra_{L^2(G\times S\times I)}\nonumber\\
&
=
\int_G\int_{S'}\int_{I'}\psi(x,\omega',E')\Big({\rm p.f.}\int_{E_0}^{E'}{1\over{E'-E}}\hat\sigma_1(x,E',E)
\nonumber\\
&
\cdot
\int_0^{2\pi}
v(x,\gamma(E',E,\omega')(s),E)dsdE\Big) dE' d\omega' dx,
\eea

We show in Part B below that $B_2(.,.)$ has the expression (\ref{b2-a}). Hence noting that 
\[
\la T\psi,v\ra_{L^2(G\times S\times I)}=\la f,v\ra_{L^2(G\times S\times I)}
\]
and combining (\ref{fb-a}), (\ref{b1-b}), (\ref{b0-b}) we obtain the assertion (\ref{var-form}).

B. Consider the term $B_2(\psi,v)$. We utilize the expression (\ref{Aj-a}) for $A_2$ (alternatively we could use the expression (\ref{A2})).
Denote
\[
f(x,E',E):=\hat{\sigma}_{2}(x,E',E)
\int_S\int_{0}^{2\pi}\psi(x,\omega,E')v(x,\gamma(E',E,\omega)(s),E)ds d\omega.
\]
Applying  the  formulas (\ref{var-for-9}), (\ref{vf-3}),   (\ref{adle3-1})
we obtain (the existence of the integral $\int_{G\times S\times I}$ below becomes clear in a course of computations)
\bea\label{var-for-8}
&
B_2(\psi,v)=\la A_2\psi
, v \ra_{L^2(G\times S\times I)}=\la -{\s H}_2\big((\ol{\s K}_2\psi)(x,\omega,\cdot,E)(E),v \ra_{L^2(G\times S\times I)}\nonumber\\
&
=
-
\int_{G\times S\times I}\Big({\rm p.f.}\int_E ^{E_m}{1\over{(E'-E)^2}}\Big(
\hat{\sigma}_{2}(x,E',E) \nonumber\\
&
\cdot
\int_{0}^{2\pi}\psi(x,\gamma(E',E,\omega)(s),E')ds\Big)dE'\Big)v(x,\omega,E)dx d\omega dE\nonumber\\
&
=
-
\int_{G}\Big[\int_I\Big({\rm p.f.}\int_E ^{E_m}{1\over{(E'-E)^2}}\Big(
\hat{\sigma}_{2}(x,E',E)\nonumber\\
&
\cdot
\int_S\int_{0}^{2\pi}\psi(x,\omega,E')v(x,\gamma(E',E,\omega)(s),E)ds d\omega\Big)dE'\Big)dE\Big] dx\nonumber\\
&
=
-
\int_{G}\Big[\int_I\Big({\rm p.f.}\int_E ^{E_m}{1\over{(E'-E)^2}}
f(x,E',E)dE'\Big)dE\Big] dx\nonumber\\
&
=-
\int_{G}\Big[\int_I\Big(
{\rm p.f.}\int_E^{E_m}{1\over{E'-E}}{\p f{E'}}(x,E',E)dE'+{\p f{E'}}(x,E,E)_+
\nonumber\\
&
-{1\over{E_m-E}}f(x,E_m,E)\Big)dE\Big] dx.
\eea
We find that in due to the initial condition (\ref{var-for-3-a})
\[
f(x,E_m,E)=
\hat{\sigma}_{2}(x,E',E)
\int_S\int_{0}^{2\pi}\psi(x,\omega,E_m)v(x,\gamma(E_m,E,\omega)(s),E)ds d\omega
=0
\] 
and so 
\bea\label{B2-v}
B_2(\psi,v)&=
-\int_{G}\int_I\Big(
{\rm p.f.}\int_E^{E_m}{1\over{E'-E}}{\p f{E'}}(x,E',E)dE'\Big)dEdx-
\int_{G}\int_I{\p f{E'}}(x,E,E)_+ dE dx
\nonumber\\
&
=:I_1+I_2.
\eea

By Lemma \ref{ad-le-1}
\be\label{var-for-8-a}
I_1=-\int_{G}\int_{I'}\Big(
{\rm p.f.}\int_{E_0}^{E'}{1\over{E'-E}}{\p f{E'}}(x,E',E)dE\Big)dE' dx.
\ee
Moreover, for $E'\not=E$ we have
\bea\label{var-for-9-a}
&
{\p f{E'}}(x,E',E)=
{\p {\hat{\sigma}_{2}}{E'}}(x,E',E)
\int_S\int_{0}^{2\pi}\psi(x,\omega,E')v(x,\gamma(E',E,\omega)(s),E)ds d\omega
\nonumber\\
&
+
\hat{\sigma}_{2}(x,E',E)
\int_S\int_{0}^{2\pi}{\p \psi{E'}}(x,\omega,E')v(x,\gamma(E',E,\omega)(s),E)ds d\omega
\nonumber\\
&
+
\hat{\sigma}_{2}(x,E',E)
\int_S\int_{0}^{2\pi}\psi(x,\omega,E')\la (\nabla_S v)(x,\gamma(E',E,\omega)(s),E),{\p \gamma{E'}}(E',E,\omega)(s)\ra ds d\omega
\eea
and so by (\ref{var-for-12}) 
\bea\label{var-for-8-b}
&
I_1=-
\int_{G}\int_{I'}\Big(
{\rm p.f.}\int_{E_0}^{E'}{1\over{E'-E}}
{\p {\hat{\sigma}_{2}}{E'}}(x,E',E) \nonumber\\
&
\cdot
\int_S\int_{0}^{2\pi}\psi(x,\omega,E')v(x,\gamma(E',E,\omega)(s),E)ds 
dE\Big) dE'd\omega dx
\nonumber\\
&
-\int_{G}\int_{I'}\Big({\rm p.f.}\int_{E_0}^{E'}{1\over{E'-E}}
\hat{\sigma}_{2}(x,E',E) \nonumber\\
&
\cdot
\int_S\int_{0}^{2\pi}{\p \psi{E'}}(x,\omega,E')v(x,\gamma(E',E,\omega)(s),E)ds  dE\Big) dE'd\omega dx
\nonumber\\
&
-\int_{G}\int_{I'}\Big({\rm p.f.}\int_{E_0}^{E'}{1\over{E'-E}}
\hat{\sigma}_{2}(x,E',E)
\int_S\int_{0}^{2\pi}\psi(x,\omega,E')\nonumber\\
&
\cdot
\la (\nabla_S v)(x,\gamma(E',E,\omega)(s),E),{\p \gamma{E'}}(E',E,\omega)(s)\ra ds d\omega
dE\Big)dE' dx
\nonumber\\
&
=
-
\int_{G}\int_{I'}\int_S\psi(x,\omega,E')\Big(
{\rm p.f.}\int_{E_0}^{E'}{1\over{E'-E}}
{\p {\hat{\sigma}_{2}}{E'}}(x,E',E) \nonumber\\
&
\cdot
\int_{0}^{2\pi}v(x,\gamma(E',E,\omega)(s),E)ds 
dE\Big) dE' d\omega dx\nonumber\\
&
-\int_{G}\int_{I'}\int_S\psi(x,\omega,E')\Big({\rm p.f.}\int_{E_0}^{E'}{1\over{E'-E}}
\hat{\sigma}_{2}(x,E',E)
\nonumber\\
&
\cdot \int_{0}^{2\pi}
\la (\nabla_S v)(x,\gamma(E',E,\omega)(s),E),{\p \gamma{E'}}(E',E,\omega)(s)\ra ds 
dE\Big)dE' d\omega dx\nonumber\\
&
-\int_{G}\int_{I'}\int_S{\p \psi{E'}}(x,\omega,E')\Big({\rm p.f.}\int_{E_0}^{E'}{1\over{E'-E}}
\hat{\sigma}_{2}(x,E',E) \nonumber\\
&
\cdot
\int_{0}^{2\pi}v(x,\gamma(E',E,\omega)(s),E)ds  dE\Big) dE' d\omega dx\nonumber\\
&
=:I_{1,1}+I_{1,2}+I_{1,3}
.
\eea

The last term 
\[
I_{1,3}&=-\int_{G}\int_{I'}\int_S{\p \psi{E'}}(x,\omega,E')\\
&
\cdot
\Big({\rm p.f.}\int_{E_0}^{E'}{1\over{E'-E}}
\hat{\sigma}_{2}(x,E',E)
\int_{0}^{2\pi}v(x,\gamma(E',E,\omega)(s),E)ds  dE\Big) dE' d\omega dx
\]
requires still refining. Let
\[
w(x,\omega,E'):=
{\rm p.f.}\int_{E_0}^{E'}{1\over{E'-E}}
\hat{\sigma}_{2}(x,E',E)
\int_{0}^{2\pi}v(x,\gamma(E',E,\omega)(s),E)ds  dE.
\] 
Then
\be \label{var-for-8-c}
I_{1,3}
=-
\int_{G}\int_{I'}\int_S{\p \psi{E'}}(x,\omega,E')w(x,\omega,E') dE' d\omega dx
\ee
By partial integration we obtain (partial integration is legitimate due to Lemma \ref{le-reg}; we omit details here)
\be\label{B2-12}
I_{1,3}=-\int_G\int_S\Big(
\Big|_{E_0}^{E_m}\psi(x,\omega,E')w(x,\omega,E')-
\int_{I'}\psi(x,\omega,E'){\p w{E'}}(x,\omega,E') dE'\Big)d\omega dx
\ee
Here we interpret 
\[
&
\Big|_{E_0}^{E_m}\psi(x,\omega,E')w(x,\omega,E')
:=\psi(x,\omega,E_m)w(x,\omega,E_m)-\lim_{E'\to E_0}\psi(x,\omega,E')w(x,\omega,E')\nonumber\\
&
=0-0=0
\]
where we used Lemma \ref{le-for-w} below.

Furthermore, we have by Lemma \ref{hadale}
\bea\label{var-for-8-d}
&
{\p w{E'}}(x,\omega,E')
=
{\partial\over{\partial E'}}\Big(
{\rm p.f.}\int_{E_0}^{E'}{1\over{E'-E}}
\hat{\sigma}_{2}(x,E',E)
\int_{0}^{2\pi}v(x,\gamma(E',E,\omega)(s),E)ds  dE\Big)
\nonumber\\
&
=
-{\rm p.f.}\int_{E_0}^{E'}{1\over{(E'-E)^2}}
\hat{\sigma}_{2}(x,E',E)
\int_{0}^{2\pi}v(x,\gamma(E',E,\omega)(s),E)ds  dE
\nonumber\\
&
+
{\rm p.f.}\int_{E_0}^{E'}{1\over{E'-E}}
{\p {\hat{\sigma}_{2}}{E'}}(x,E',E)
\int_{0}^{2\pi}v(x,\gamma(E',E,\omega)(s),E)ds  dE
\nonumber\\
&
+
{\rm p.f.}\int_{E_0}^{E'}{1\over{E'-E}}
\hat{\sigma}_{2}(x,E',E)
\int_{0}^{2\pi}\la (\nabla_S v)(x,\gamma(E',E,\omega)(s),E),
{\p {\gamma}{E'}}(E',E,\omega)\ra ds  dE
\nonumber\\
&
-
{\partial\over{\partial E}}\Big(\hat{\sigma}_{2}(x,E',E)\int_{0}^{2\pi}v(x,\gamma(E',E,\omega)(s),E)ds\Big)_{\Big|E=E'}
\eea
where
\[
&
{\partial\over{\partial E}}\Big(\hat{\sigma}_{2}(x,E',E)\int_{0}^{2\pi}v(x,\gamma(E',E,\omega)(s),E)ds\Big)_{\Big|E=E'}\nonumber\\
&
=
2\pi\ {\p {\hat{\sigma}_{2}}{E}}(x,E',E')v(x,\omega,E')
+
\hat{\sigma}_{2}(x,E',E'){\partial\over{\partial E}}\Big(\int_{0}^{2\pi}v(x,\gamma(E',E,\omega)(s),E)ds\Big)_{\Big|E=E'}.
\]

Hence we get 
\bea\label{var-for-14-a}
&
I_{1,3}=
\int_G\int_S\int_{I'}\psi(x,\omega,E')
\Big[-
{\rm p.f.}\int_{E_0}^{E'}{1\over{(E'-E)^2}}
\hat{\sigma}_{2}(x,E',E) \nonumber\\
&
\cdot
\int_{0}^{2\pi}v(x,\gamma(E',E,\omega)(s),E)ds  dE\Big)
\nonumber\\
&
+
{\rm p.f.}\int_{E_0}^{E'}{1\over{E'-E}}
{\p {\hat{\sigma}_{2}}{E'}}(x,E',E)
\int_{0}^{2\pi}v(x,\gamma(E',E,\omega)(s),E)ds  dE\Big)
\nonumber\\
&
+
{\rm p.f.}\int_{E_0}^{E'}{1\over{E'-E}}
\hat{\sigma}_{2}(x,E',E)
\int_{0}^{2\pi}\la (\nabla_S v)(x,\gamma(E',E,\omega)(s),E),
{\p {\gamma}{E'}}(E',E,\omega)\ra ds  dE
\nonumber\\
&
-2\pi\ {\p {\hat{\sigma}_{2}}{E}}(x,E',E')v(x,\omega,E') \nonumber\\
&
-
\hat{\sigma}_{2}(x,E',E'){\partial\over{\partial E}}\Big(\int_{0}^{2\pi}v(x,\gamma(E',E,\omega)(s),E)ds\Big)_{\Big|E=E'}\Big]dE' d\omega dx.
\eea

Consider the term $I_2$. Since $\gamma(E,E,\omega)(s)=\omega$ we have by (\ref{var-for-9}) 
\bea\label{var-for-15-b}
&
I_2=-\int_G\int_I
{\p f{E'}}(x,E,E)_+ dE dx=-2\pi\ \int_G\int_I\int_S
\psi(x,\omega,E){\p {\hat{\sigma}_{2}}{E'}}(x,E,E)
v(x,\omega,E) d\omega dE dx
\nonumber\\
&
-
2\pi\ \int_G\int_I\int_S{\p \psi{E}}(x,\omega,E)\hat{\sigma}_{2}(x,E,E) 
v(x,\omega,E) d\omega dE dx
\nonumber\\
&
-\int_G\int_I
\int_S\psi(x,\omega,E)\hat{\sigma}_{2}(x,E,E)
\nonumber\\
&
\cdot
\Big(\int_{0}^{2\pi}\la (\nabla_S v)(x,\gamma(E',E,\omega)(s),E),{\p \gamma{E'}}(E',E,\omega)(s)\ra ds\Big)_{\Big|E'=E} d\omega dE dx
\eea
where by partial integration 
\bea\label{var-for-15-c}
&
\int_G\int_I\int_S{\p \psi{E}}(x,\omega,E)\hat{\sigma}_{2}(x,E,E)v(x,\omega,E) d\omega dE dx\nonumber\\
&
=\int_G\int_S\Big(\Big|_{E_0}^{E_m}\hat{\sigma}_{2}(x,E,E)\psi(x,\omega,E)v(x,\omega,E)-\int_I\psi(x,\omega,E){\p {(\hat{\sigma}_{2}(x,E,E)v)}{E}}(x,\omega,E)\Big) d\omega.
\eea
Hence due to $\psi(.,.,E_m)=v(.,.,E_0)=0$
\bea\label{var-for-16-a}
&
I_2=
-\int_G\int_I\int_S\psi(x,\omega,E)
\Big(2\pi\ 
{\p {\hat{\sigma}_{2}}{E'}}(x,E,E)
v(x,\omega,E)
-2\pi\ {\p {(\hat{\sigma}_{2}(x,E,E)v)}{E}}(x,\omega,E)\Big) d\omega dE dx
\nonumber\\
&
-
\int_G\int_I\int_S\psi(x,\omega,E)
\hat{\sigma}_{2}(x,E,E)\nonumber\\
&
\cdot
\Big(\int_{0}^{2\pi}\la (\nabla_S v)(x,\gamma(E',E,\omega)(s),E),{\p \gamma{E'}}(E',E,\omega)(s)\ra ds\Big)_{\Big|E'=E} d\omega dE  dx.
\eea

Combining (\ref{var-for-8}), (\ref{var-for-8-b}), (\ref{var-for-14-a}), (\ref{var-for-16-a})  we obtain
\bea\label{var-for-15-a-1}
&
B_2(\psi,v)=
I_{1,1}+I_{1,2}+I_{1,3}+I_2\nonumber\\
&
=
-
\int_{G}\int_{I'}\int_S\psi(x,\omega,E')\Big(
{\rm p.f.}\int_{E_0}^{E'}{1\over{E'-E}}
{\p {\hat{\sigma}_{2}}{E'}}(x,E',E)
\int_{0}^{2\pi}v(x,\gamma(E',E,\omega)(s),E)ds 
dE\Big) dE' d\omega dx\nonumber\\
&
-\int_{G}\int_{I'}\int_S\psi(x,\omega,E')\Big({\rm p.f.}\int_{E_0}^{E'}{1\over{E'-E}}
\hat{\sigma}_{2}(x,E',E)
\nonumber\\
&
\cdot \int_{0}^{2\pi}
\la (\nabla_S v)(x,\gamma(E',E,\omega)(s),E),{\p \gamma{E'}}(E',E,\omega)(s)\ra ds 
dE\Big)dE' d\omega dx\nonumber\\
&
-
\int_G\int_S\int_{I'}\psi(x,\omega,E')
\Big(
{\rm p.f.}\int_{E_0}^{E'}{1\over{(E'-E)^2}}
\hat{\sigma}_{2}(x,E',E)
\int_{0}^{2\pi}v(x,\gamma(E',E,\omega)(s),E)ds  dE\Big)dE' d\omega dx
\nonumber\\
&
+\int_G\int_S\int_{I'}\psi(x,\omega,E')
\Big(
{\rm p.f.}\int_{E_0}^{E'}{1\over{E'-E}}
{\p {\hat{\sigma}_{2}}{E'}}(x,E',E)
\int_{0}^{2\pi}v(x,\gamma(E',E,\omega)(s),E)ds  dE\Big)dE' d\omega dx
\nonumber\\
&
+\int_G\int_S\int_{I'}\psi(x,\omega,E')\nonumber\\
&
\cdot
\Big(
{\rm p.f.}\int_{E_0}^{E'}{1\over{E'-E}}
\hat{\sigma}_{2}(x,E',E)
\int_{0}^{2\pi}\la (\nabla_S v)(x,\gamma(E',E,\omega)(s),E),
{\p {\gamma}{E'}}(E',E,\omega)\ra ds  dE\Big)dE' d\omega dx
\nonumber\\
& 
-2\pi\ 
\int_G\int_S\int_{I'}\psi(x,\omega,E')
{\p {\hat{\sigma}_{2}}{E}}(x,E',E')v(x,\omega,E')dE' d\omega dx\nonumber\\
&
-
\int_G\int_S\int_{I'}\psi(x,\omega,E')
\hat{\sigma}_{2}(x,E',E'){\partial\over{\partial E}}\Big(\int_{0}^{2\pi}v(x,\gamma(E',E,\omega)(s),E)ds\Big)_{\Big|E=E'}dE' d\omega dx
\nonumber\\
&
-
2\pi\ 
\int_G\int_I\int_S\psi(x,\omega,E)
{\p {\hat{\sigma}_{2}}{E'}}(x,E,E)
v(x,\omega,E) dE d\omega dx\nonumber\\
&
+2\pi\ \int_G\int_I\int_S\psi(x,\omega,E){\p {(\hat{\sigma}_{2}(x,E,E)v)}{E}}(x,\omega,E)\Big) dE d\omega dx\nonumber\\
&
-\int_G\int_I\int_S\psi(x,\omega,E)
\hat{\sigma}_{2}(x,E,E)\nonumber\\
&
\cdot
\Big(\int_{0}^{2\pi}\la (\nabla_S v)(x,\gamma(E',E,\omega)(s),E),{\p \gamma{E'}}(E',E,\omega)(s)\ra ds\Big)_{\Big|E'=E}
\Big)d\omega  dE dx
.
\eea

In virtue of the proof Theorem \ref{ad-le-6})
\[
&
{\partial\over{\partial E}}\Big(\int_{0}^{2\pi}v(x,\gamma(E',E,\omega)(s),E)ds\Big)_{\Big|E=E'}
=
2\pi\ {\p v{E'}}(x,\omega,E')\nonumber\\
&
+
\Big(\int_{0}^{2\pi}\la (\nabla_S v)(x,\gamma(E',E,\omega)(s),E),{\p \gamma{E}}(E',E,\omega)(s)\ra ds\Big)_{\Big|E=E'}.
\]
Furthermore,
\[
&
{\p {(\hat{\sigma}_{2}(x,E,E)v)}{E}}
={\p {(\hat{\sigma}_{2}(x,E,E))}{E}}v+\hat{\sigma}_{2}(x,E,E){\p v{E}}\\
&
=\big({\p {\hat{\sigma}_{2}}{E'}}(x,E,E)+{\p {\hat{\sigma}_{2}}{E}}(x,E,E)\big)
v+\hat{\sigma}_{2}(x,E,E){\p v{E}},
\]
\[
{\p \gamma{E}}(E',E,\omega)(s)=R(\omega)\big(-{{\mu\ \partial_E\mu}\over{\sqrt{1-\mu^2}}}\cos(s),-{{\mu\ \partial_E\mu}\over{\sqrt{1-\mu^2}}}\sin(s), \partial_E\mu\big),
\]
\[
{\p \gamma{E'}}(E',E,\omega)(s)=R(\omega)\big(-{{\mu\ \partial_{E'}\mu}\over{\sqrt{1-\mu^2}}}\cos(s),-{{\mu\ \partial_{E'}\mu}\over{\sqrt{1-\mu^2}}}\sin(s), \partial_{E'}\mu)\big),
\]
\[
(\partial_E\mu)(E',E)={1\over{2\mu}}\Big({1\over{E'(E+2)^2}}(2E'+4)\Big),
\ (\partial_{E'}\mu)(E',E)={1\over{2\mu}}\Big({1\over{E'^2(E+2)}}(-2E)\Big).
\]
Denote 
\bea\label{theta}
&
\theta(E',E,\omega)(s):={\p \gamma{E'}}(E',E,\omega)(s)+{\p \gamma{E}}(E',E,\omega)(s)
\nonumber\\
&
=
 (\partial_{E'}\mu+\partial_{E}\mu)R(\omega)
\big({{-\mu}\over{\sqrt{1-\mu^2}}}\cos(s),{{-\mu}\over{\sqrt{1-\mu^2}}}\sin(s),1\big),
\eea

Hence removing the cancelling terms and reorganizing  we conclude from (\ref{var-for-15-a-1}) that
\bea\label{var-for-15-a-2}
&
B_2(\psi,v)\nonumber\\
&
=
-
\int_G\int_S\int_{I'}\psi(x,\omega,E')
\Big(
{\rm p.f.}\int_{E_0}^{E'}{1\over{(E'-E)^2}}
\hat{\sigma}_{2}(x,E',E) \nonumber\\
&
\cdot
\int_{0}^{2\pi}v(x,\gamma(E',E,\omega)(s),E)ds  dE\Big)dE' d\omega dx\nonumber\\
&
-\int_G\int_I\int_S
\hat{\sigma}_{2}(x,E,E)\psi(x,\omega,E)\nonumber\\
&
\cdot
\Big(\int_{0}^{2\pi}\la (\nabla_S v)(x,\gamma(E',E,\omega)(s),E),\theta(E',E'\omega)(s)\ra ds\Big)_{\Big|E'=E}
\Big) dE d\omega dx
.
\eea

C.
Note that $\theta(E',E,\omega)(s)={\p {\gamma}{E}}(E',E,\omega)(s)+{\p {\gamma}{E'}}(E',E,\omega)(s))$ has a singularity of the form $(E'-E)^{-1/2}$ as well. 
Applying the techniques used in the proof of Theorem \ref{ad-le-6} (cf. (\ref{var-for-14})) we have
\bea 
&
\Big(\int_{0}^{2\pi}\la (\nabla_S v)(x,\gamma(E',E,\omega)(s),E),\theta(E',E,\omega)(s)\ra ds\Big)_{\Big|E'=E}
\nonumber\\
&
=
2\pi \big((\partial_{E}\mu)(E,E)+(\partial_{E'}\mu)(E,E)\big)(\omega\cdot\nabla_Sv)(x,\omega,E)\nonumber
+
\sum_{|\alpha|\leq 2}b_{\alpha}(E,\omega)(\partial_{\omega}^\alpha v)(x,\omega,E)
\eea
where
\bea\label{cc}
&
\sum_{|\alpha|\leq 2}b_\alpha (\omega,E)
(\partial_{\omega}^\alpha v)(x,\omega,E)\nonumber\\
&
=
\lim_{E'\to E}\int_0^{2\pi}\sum_{j=1}^2\partial_j \big(\la (\nabla_S\psi\circ  H_\omega)(x,.,E'),\theta(E',E,\omega,s)\ra\big)(0) \xi_j((E',E,\omega,s) ds
\eea
and
$\theta(E',E,\omega,s):={\p {\gamma}{E'}}(E',E,\omega)(s)+{\p {\gamma}{E}}(E',E,\omega)(s)$. 
Now, similarly as in Lemma \ref{aij}
\[
&
\lim_{E'\to E}\int_0^{2\pi}\sum_{j=1}^2\partial_j \big(\la (\nabla_S\psi\circ  H_\omega)(x,.,E'),\theta(E',E,\omega,s)\ra\big)(0) \xi_j((E',E,\omega,s) ds 
\nonumber\\
&
=
\big((\partial_{E'}\mu)(E,E)+(\partial_{E}\mu)(E,E)\big) 
\cdot
\int_0^{2\pi}
\sum_{j=1}^2\partial_j \big(\la (\nabla_S\psi\circ  H_\omega)(x,.,E),R(\omega)\big(\cos(s),\sin(s),0\big)\ra\big)(0)\\
&
\cdot
\la R(\omega)\big(\cos(s),\sin(s),0\big)
,\ol\Omega_j\ra  ds.
\]
Recall that by (\ref{nab0})
\[
(\omega\cdot\nabla_Sv)(x,\omega,E)=0.
\]
Hence we finally obtain 
\bea\label{var-for-15-a-3}
&
B_2(\psi,v)\nonumber\\
&
=
-
\int_G\int_S\int_{I'}\psi(x,\omega,E')
\Big(
{\rm p.f.}\int_{E_0}^{E'}{1\over{(E'-E)^2}}
\hat{\sigma}_{2}(x,E',E) \nonumber\\
&
\cdot
\int_{0}^{2\pi}v(x,\gamma(E',E,\omega)(s),E)ds  dE\Big)dE' d\omega dx
\nonumber\\
&
-\int_G\int_I\int_S\hat{\sigma}_{2}(x,E,E)\psi(x,\omega,E)
\sum_{|\alpha|\leq 2}b_{\alpha}(E,\omega)(\partial_{\omega}^\alpha v)(x,\omega,E)
dE d\omega 
\eea
where
\bea\label{bij-a}
&
\sum_{|\alpha|\leq 2}b_\alpha (\omega,E)
(\partial_{\omega}^\alpha\psi)(x,\omega,E)
:=
\big((\partial_{E'}\mu)(E,E)+(\partial_{E}\mu)(E,E)\big)\\
&
\cdot
\sum_{j=1}^2\int_0^{2\pi}
\partial_j \big(\la (\nabla_S\psi\circ  H_\omega)(x,.,E),R(\omega)\big(\cos(s),\sin(s),0\big)\ra\big)(0)
\nonumber\\
&
\cdot
\la R(\omega)\big(\cos(s),\sin(s),0\big)
,\ol\Omega_j\ra  ds.
\eea
Since
\[
&
(\mu\ \partial_{E'}\mu)(E',E)={E\over{E'(E+2)}}-{{E(E'+2)(E+2)}\over{E'^2((E+2)^2}},\\
&
(\mu\ \partial_{E}\mu)(E',E)={{E'2}\over{E'(E+2)}}-{{E(E'+2)E'}\over{E'^2((E+2)^2}}
\]
we find that
\[
( \partial_{E'}\mu)(E',E)+(\partial_{E}\mu)(E',E)
={1\over {E+2}}-{1\over{E}}+{1\over E}-{1\over{E+2}}=0.
\]
That is why $\sum_{|\alpha|\leq 2}b_\alpha (\omega,E)
(\partial_{\omega}^\alpha\psi)(x,\omega,E)=0$ which completes the proof. 
\end{proof}

We show the above used lemma.

\begin{lemma}\label{le-for-w}
A.
Assume that $\hat\sigma_2\in C(\ol G\times I',C^\alpha(I))$ and that 
%$\psi\in C(\ol G\times S\times I)$ and 
$v\in C(\ol G\times I',C^{\alpha}(S))\cap C(\ol G\times S, C^\alpha(I')),\ \alpha >0$ for which $v(.,.E_0)=0$. Then
\bea
&
\lim_{E'\to E_0}\Big(\int_{E_0}^{E'}{1\over{E'-E}}\hat{\sigma}_{2}(x,E',E)
\int_{0}^{2\pi}v(x,\gamma(E',E,\omega)(s),E)ds dE\Big)
\nonumber\\
&
=
2\pi\ \lim_{E'\to E_0}\ln(E'-E_0)\hat{\sigma}_{2}(x,E',E')
v(x,\omega,E')=0.
\eea

B. 
Assume that $\hat\sigma_2\in C(\ol G\times I,C^\alpha(I'))$  that $\psi\in C(\ol G\times I,C^{\alpha}(S))\cap C(\ol G\times S, C^\alpha(I)),\ \alpha >0$ for which $\psi(.,.E_m)=0$ 
%and $v\in C(\ol G\times S\times I)$. 
Then
\bea
&
\lim_{E\to E_m}\Big(\int_{E}^{E_m}{1\over{E'-E}}\hat{\sigma}_{2}(x,E',E)
\int_{0}^{2\pi}\psi(x,\gamma(E',E,\omega)(s),E)ds dE'\Big)
\nonumber\\
&
=
2\pi\ \lim_{E\to E_m}\ln(E_m-E)\hat{\sigma}_{2}(x,E,E)
\psi(x,\omega,E)=0.
\eea
\end{lemma}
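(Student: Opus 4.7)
The plan is to interpret both integrals in the lemma as Hadamard finite part integrals (as is standard in the paper whenever $1/(E'-E)$ appears), then apply the decomposition formula (\ref{c-0-a}) to split each into a Hölder-controlled remainder plus an explicit logarithmic boundary term, and finally exploit the boundary vanishing of $v$ (respectively $\psi$) together with its Hölder regularity to kill the log.

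For Part A, I would set
\ben
g(x,\omega,E',E):=\hat\sigma_2(x,E',E)\int_0^{2\pi}v(x,\gamma(E',E,\omega)(s),E)\,ds
\een
and first verify that $E\mapsto g(x,\omega,E',E)$ is Hölder continuous of some exponent $\alpha'>0$ uniformly in the other variables. This is where I expect the only non-routine work: the $E$-dependence enters $g$ both through the energy slot of $v$ and through $\gamma(E',E,\omega)(s)$, and the latter is only $\tfrac{1}{2}$-Hölder in $E$ by the estimate (\ref{e-10}). Combining this with the hypotheses $\hat\sigma_2\in C(\ol G\times I',C^\alpha(I))$ and $v\in C(\ol G\times I',C^\alpha(S))\cap C(\ol G\times S,C^\alpha(I'))$, exactly as in Corollary \ref{x-le-cor}, gives Hölder continuity of $g$ in $E$ with exponent $\alpha'=\alpha/2$.

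With $g$ so controlled, the analog of (\ref{c-0-a}) for intervals of the form $[E_0,E']$ yields
\ben
{\rm p.f.}\int_{E_0}^{E'}\frac{g(x,\omega,E',E)}{E'-E}\,dE
=\int_{E_0}^{E'}\frac{g(x,\omega,E',E)-g(x,\omega,E',E')}{E'-E}\,dE
+g(x,\omega,E',E')\ln(E'-E_0).
\een
The remainder integral is bounded by $C_{\alpha'}\int_{E_0}^{E'}|E'-E|^{\alpha'-1}\,dE=\tfrac{C_{\alpha'}}{\alpha'}(E'-E_0)^{\alpha'}\to 0$ as $E'\to E_0$. For the boundary term, the identity $\mu(E',E')=1$ gives $\gamma(E',E',\omega)(s)=\omega$, hence
\ben
g(x,\omega,E',E')=2\pi\,\hat\sigma_2(x,E',E')\,v(x,\omega,E'),
\een
which proves the first equality of the claim. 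The second equality follows from $v(\cdot,\cdot,E_0)=0$ combined with $v\in C(\ol G\times S,C^\alpha(I'))$: we have $|v(x,\omega,E')|=|v(x,\omega,E')-v(x,\omega,E_0)|\leq C(E'-E_0)^\alpha$, so that $|v(x,\omega,E')\ln(E'-E_0)|\leq C(E'-E_0)^\alpha|\ln(E'-E_0)|\to 0$, while $\hat\sigma_2$ stays bounded by hypothesis.

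Part B is the mirror image: I would apply the original identity (\ref{c-0-a}) directly to $\text{p.f.}\int_E^{E_m}\tfrac{\tilde g(x,\omega,E',E)}{E'-E}\,dE'$ with $\tilde g(x,\omega,E',E):=\hat\sigma_2(x,E',E)\int_0^{2\pi}\psi(x,\gamma(E',E,\omega)(s),E)\,ds$, obtaining a Hölder remainder of order $(E_m-E)^{\alpha/2}$ plus the boundary contribution $\tilde g(x,\omega,E,E)\ln(E_m-E)=2\pi\,\hat\sigma_2(x,E,E)\psi(x,\omega,E)\ln(E_m-E)$, which vanishes as $E\to E_m$ by the Hölder vanishing of $\psi$ at $E_m$. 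The main obstacle, as noted, is the Hölder check for $g$ (resp.\ $\tilde g$); everything afterwards is routine.
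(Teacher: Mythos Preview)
Your proposal is correct and follows essentially the same route as the paper: define the integrand $g$ (the paper calls it $h$), split the finite part integral via the identity (\ref{c-0-a}) into a H\"older-controlled remainder of order $(E'-E_0)^{\alpha'}$ plus the diagonal term $g(x,\omega,E',E')\ln(E'-E_0)$, evaluate the diagonal using $\gamma(E',E',\omega)(s)=\omega$, and kill the logarithm with the boundary vanishing of $v$. Your justification of the final step, using $|v(x,\omega,E')|\le C(E'-E_0)^\alpha$ directly from the $C^\alpha$ hypothesis, is in fact more self-contained than the paper's, which invokes (\ref{limv}) (a consequence stated earlier for the smoother test class $\mc D_{(0)}$).
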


\begin{proof}
A.
Let 
\[
h(x,\omega,E',E):=\hat{\sigma}_{2}(x,E',E)
\int_{0}^{2\pi}v(x,\gamma(E',E,\omega)(s),E)ds .
\]
Then
\[
& 
{\rm p.f.}\int_{E_0}^{E'}{{h(x,\omega,E',E)}\over{E'-E}}dE\\
&
=
{\rm p.f.}\int_{E_0}^{E'}{{h(x,\omega,E',E')}\over{E'-E}}dE
+
\int_{E_0}^{E'}{{h(x,\omega,E',E)-h(x,\omega,E',E')}\over{E'-E}}dE.
\]
In virtue of the assumptions on $\hat\sigma_2$ and $v$ there exits $\delta>0$ such that (we omit details but recall that $\n{\gamma(E',E,\omega)(s)-\omega}\leq C|E'-E|^{1\over 2}$)
\[
\Big|{{h(x,\omega,E',E)-h(x,\omega,E',E')}\over{E'-E}}\Big|\leq C(E'-E)^{\delta-1}
\]
and so 
\[
\lim_{E'\to E_0}\int_{E_0}^{E'}{{h(x,\omega,E',E)-h(x,\omega,E',E')}\over{E'-E}}dE=0.
\]
Moreover, (recall that $\gamma(E',E',\omega)(s)=\omega$)
\[
&
{\rm p.f.}\int_{E_0}^{E'}{{h(x,\omega,E',E')}\over{E'-E}}dE=
\ln(E'-E_0)h(x,\omega,E',E')=2\pi\ \ln(E'-E_0)\hat\sigma_2(x,E',E')v(x,\omega,E')
\]
Hence we get
\bea
&
\lim_{E'\to E_0}
\int_{E_0}^{E'}{1\over{E'-E}}\hat{\sigma}_{2}(x,E',E)
\int_{0}^{2\pi}v(x,\gamma(E',E,\omega)(s),E)ds dE\nonumber\\
&
=2\pi\ \lim_{E'\to E_0}
\ln(E'-E_0)\hat\sigma_2(x,E',E')v(x,\omega,E')
+
\lim_{E'\to E_0}
\int_{E_0}^{E'}{{h(x,\omega,E',E)-h(x,\omega,E',E')}\over{E'-E}}dE
\nonumber\\
&
=
2\pi\ 
\lim_{E'\to E_0}
\ln(E'-E_0)\hat\sigma_2(x,E',E')v(x,\omega,E')=0
\eea
since by (\ref{limv}) $\lim_{E'\to E_0}\ln(E'-E_0)v(x,\omega,E')=0$.
This completes the proof.

B. The proof runs analogously as in Part A.
\end{proof}

Note that
\be
\int_{\Gamma_-}(\omega\cdot\nu)_-{g} v\ d\sigma d\omega dE=
\la { g},\gamma_-(v)\ra_{T^2(\Gamma_-)}
\ee
and
\be
\int_{\Gamma_+}(\omega\cdot\nu)_+\psi v\ d\sigma d\omega dE=
\la \gamma_+(\psi),\gamma_+(v)\ra_{T^2(\Gamma_+)}.
\ee

\subsection{Lowering the degree of singularities in the bilinear form $B_2(.,.)$}\label{lowering}

The above bilinear form $B_2(.,.)$ contains singularities of the form ${1\over{(E'-E)^2}}dE dE'$  which may be problematic e.g. in numerical treatments. The degree of this singularity can be lowered as follows.
Consider the term in question
\[
I
&
:=-
\int_G\int_S\int_{I'}\psi(x,\omega,E')
\Big(
{\rm p.f.}\int_{E_0}^{E'}{1\over{(E'-E)^2}}
\hat{\sigma}_{2}(x,E',E) \\
&
\cdot
\int_{0}^{2\pi}v(x,\gamma(E',E,\omega)(s),E)ds  dE\Big)dE' d\omega dx.
\]
Let $f(x,E',E)$ be as above in the proof of Theorem \ref{var-th-a}, Part B that is,
\[
f(x,E',E):=\hat{\sigma}_{2}(x,E',E)
\int_S\int_{0}^{2\pi}\psi(x,\omega,E')v(x,\gamma(E',E,\omega)(s),E)ds d\omega.
\]
Then
we have by Lemmas \ref{ad-le-5} and \ref{ad-le-3-aa}
\bea\label{lo-1}
&
I=-
\int_{G}\Big[\int_{I'}\Big({\rm p.f.}\int_{E_0} ^{E'}{1\over{(E'-E)^2}}\Big(
\hat{\sigma}_{2}(x,E',E) \nonumber\\
&
\cdot
\int_S\int_{0}^{2\pi}\psi(x,\omega,E')v(x,\gamma(E',E,\omega)(s),E)ds d\omega\Big)dE\Big)dE'\Big] dx\nonumber\\
&
=-
\int_{G}\Big[\int_{I'}\Big({\rm p.f.}\int_{E_0} ^{E'}{1\over{(E'-E)^2}}
f(x,E',E)dE\Big)dE'\Big] dx\nonumber\\
&
=-
\int_{G}\Big[\int_{I'}\Big(
-{\rm p.f.}\int_{E_0}^{E'}{1\over{E'-E}}{\p f{E}}(x,E',E)dE \nonumber\\
&
-{\p f{E}}(x,E',E')_-
-{1\over{E'-E_0}}f(x,E',E_0)\Big)dE'\Big] dx\nonumber\\
&
=
\int_{G}\int_{I'}\Big(
{\rm p.f.}\int_{E_0}^{E'}{1\over{E'-E}}{\p f{E}}(x,E',E)dE\Big)dE' dx
+
\int_G\int_{I'}
{\p f{E}}(x,E',E')_- dE' dx\nonumber\\
&
+\int_G\int_{I'} {1\over{E'-E_0}}f(x,E',E_0)dE' dx=:I_{1}+I_{2}+I_{3}
\eea
where we assume that the last integral $I_3$ exists. 

Moreover, we have
\bea\label{lo-2}
&
{\p f{E}}(x,E',E)=
{\p {\hat{\sigma}_{2}}{E}}(x,E',E)
\int_S\int_{0}^{2\pi}\psi(x,\omega,E')v(x,\gamma(E',E,\omega)(s),E)ds d\omega
\nonumber\\
&
+
\hat{\sigma}_{2}(x,E',E)
\int_S\int_{0}^{2\pi}\psi(x,\omega,E'){\p v{E}}(x,\gamma(E',E,\omega)(s),E)ds d\omega
\nonumber\\
&
+
\hat{\sigma}_{2}(x,E',E)
\int_S\int_{0}^{2\pi}\psi(x,\omega,E')\la (\nabla_S v)(x,\gamma(E',E,\omega)(s),E),{\p \gamma{E}}(E',E,\omega)(s)\ra ds d\omega
\eea
and so by (\ref{var-for-12}) 
\[
&
I_{1}
=
\int_{G}\int_{I'}\Big(
{\rm p.f.}\int_{E_0}^{E'}{1\over{E'-E}}
{\p {\hat{\sigma}_{2}}{E}}(x,E',E) \nonumber \\
&
\cdot
\int_S\int_{0}^{2\pi}\psi(x,\omega,E')v(x,\gamma(E',E,\omega)(s),E)ds 
dE\Big) dE'd\omega dx
\nonumber\\
&
+\int_{G}\int_{I'}\Big({\rm p.f.}\int_{E_0}^{E'}{1\over{E'-E}}
\hat{\sigma}_{2}(x,E',E) \nonumber \\
&
\cdot
\int_S\int_{0}^{2\pi}\psi(x,\omega,E'){\p v{E}}(x,\gamma(E',E,\omega)(s),E)ds  dE\Big) dE'd\omega dx
\nonumber\\
&
+\int_{G}\int_{I'}\Big({\rm p.f.}\int_{E_0}^{E'}{1\over{E'-E}}
\hat{\sigma}_{2}(x,E',E)
\int_S\int_{0}^{2\pi}\psi(x,\omega,E')\nonumber\\
&
\cdot
\la (\nabla_S v)(x,\gamma(E',E,\omega)(s),E),{\p \gamma{E}}(E',E,\omega)(s)\ra ds d\omega
dE\Big)dE' dx,
\]
i.e.
\bea\label{lo-3}
&
I_1
=
\int_{G}\int_{I'}\int_S\psi(x,\omega,E')\Big(
{\rm p.f.}\int_{E_0}^{E'}{1\over{E'-E}}
{\p {\hat{\sigma}_{2}}{E}}(x,E',E) \nonumber \\
&
\cdot
\int_{0}^{2\pi}v(x,\gamma(E',E,\omega)(s),E)ds d\omega
dE\Big) dE' dx\nonumber\\
&
+\int_{G}\int_{I'}\int_S\psi(x,\omega,E')\Big({\rm p.f.}\int_{E_0}^{E'}{1\over{E'-E}}
\hat{\sigma}_{2}(x,E',E)
\nonumber\\
&
\cdot \int_{0}^{2\pi}
\la (\nabla_S v)(x,\gamma(E',E,\omega)(s),E),{\p \gamma{E}}(E',E,\omega)(s)\ra ds d\omega
dE\Big)dE' dx\nonumber\\
&
+\int_{G}\int_{I'}\int_S\psi(x,\omega,E')\Big({\rm p.f.}\int_{E_0}^{E'}{1\over{E'-E}}
\hat{\sigma}_{2}(x,E',E) \nonumber \\
&
\cdot
\int_{0}^{2\pi}{\p v{E}}(x,\gamma(E',E,\omega)(s),E)ds  dE\Big) dE' d\omega dx
.
\eea

Since $\gamma(E',E',\omega)(s)=\omega$ we have by (\ref{lo-2}) 
\bea\label{lo-4}
&
I_{2}=
\int_G\int_{I'}
{\p f{E}}(x,E',E')_- dE dx\nonumber \\
&
=2\pi\ \int_G\int_I\int_S
\psi(x,\omega,E'){\p {\hat{\sigma}_{2}}{E}}(x,E',E')
v(x,\omega,E') d\omega dE' dx
\nonumber\\
&
+
2\pi\ \int_G\int_{I'}\int_S\psi(x,\omega,E')\hat{\sigma}_{2}(x,E',E') 
{\p v{E}}(x,\omega,E') d\omega dE' dx
\nonumber\\
&
+\int_G\int_{I'}
\int_S\psi(x,\omega,E')\hat{\sigma}_{2}(x,E',E')
\nonumber\\
&
\cdot
\Big(\int_{0}^{2\pi}\la (\nabla_S v)(x,\gamma(E',E,\omega)(s),E),{\p \gamma{E}}(E',E,\omega)(s)\ra ds\Big)_{\Big|E=E'} d\omega dE' dx
\eea

In addition,
\be\label{lo-5}
I_{3}=
\int_G\int_{I'}\int_S {1\over{E'-E_0}}\psi(x,\omega,E')\Big(
\hat{\sigma}_{2}(x,E',E_0)
\int_{0}^{2\pi}v(x,\gamma(E',E_0,\omega)(s),E_0)ds
\Big)dE' dx d\omega=0.
\ee

Substituting $I=I_1+I_2+I_3$ to (\ref{b2-a}) we obtain by (\ref{lo-3}), (\ref{lo-4}), (\ref{lo-5})
\bea\label{lo-6}
&
B_2(\psi,v)\nonumber\\
&
=
\int_{G}\int_{I'}\int_S\psi(x,\omega,E')\Big(
{\rm p.f.}\int_{E_0}^{E'}{1\over{E'-E}}
{\p {\hat{\sigma}_{2}}{E}}(x,E',E)
 \nonumber \\
&
\cdot
\int_{0}^{2\pi}v(x,\gamma(E',E,\omega)(s),E)ds d\omega
dE\Big) dE' dx\nonumber\\
&
+\int_{G}\int_{I'}\int_S\psi(x,\omega,E')\Big({\rm p.f.}\int_{E_0}^{E'}{1\over{E'-E}}
\hat{\sigma}_{2}(x,E',E)
\nonumber\\
&
\cdot \int_{0}^{2\pi}
\la (\nabla_S v)(x,\gamma(E',E,\omega)(s),E),{\p \gamma{E}}(E',E,\omega)(s)\ra ds d\omega
dE\Big)dE' dx\nonumber\\
&
+\int_{G}\int_{I'}\int_S\psi(x,\omega,E')\Big({\rm p.f.}\int_{E_0}^{E'}{1\over{E'-E}}
\hat{\sigma}_{2}(x,E',E) \nonumber \\
&
\cdot
\int_{0}^{2\pi}{\p v{E}}(x,\gamma(E',E,\omega)(s),E)ds  dE\Big) dE' d\omega dx\nonumber\\
&
+
2\pi\ \int_G\int_{I'}\int_S
\psi(x,\omega,E'){\p {\hat{\sigma}_{2}}{E}}(x,E',E')
v(x,\omega,E') d\omega dE' dx
\nonumber\\
&
+
2\pi\ \int_G\int_{I'}\int_S\psi(x,\omega,E')\hat{\sigma}_{2}(x,E',E') 
{\p v{E}}(x,\omega,E') d\omega dE' dx
\nonumber\\
&
+\int_G\int_{I'}
\int_S\psi(x,\omega,E')\hat{\sigma}_{2}(x,E',E')
\nonumber\\
&
\cdot
\Big(\int_{0}^{2\pi}\la (\nabla_S v)(x,\gamma(E',E,\omega)(s),E),{\p \gamma{E}}(E',E,\omega)(s)\ra ds\Big)_{\Big|E=E'} d\omega dE' dx.
\eea

As in (\ref{var-for-14}) one finds that
\bea\label{lo-6-a}
&
\Big(\int_{0}^{2\pi}\la (\nabla_S v)(x,\gamma(E',E,\omega)(s),E),{\p \gamma{E}}(E',E,\omega)(s)\ra ds\Big)_{\Big|E=E'} 
\nonumber\\
&
=
2\pi\ (\partial_E\mu)(E',E')(\omega\cdot\nabla_S v)(x,\omega,E')
+\sum_{|\alpha|\leq 2}c_\alpha(E',\omega)(\partial_\omega^\alpha v)(x,\omega,E')
\eea
where
\bea\label{mue'}
&
\sum_{|\alpha|\leq 2}c_\alpha (\omega,E')
(\partial_{\omega}^\alpha v)(x,\omega,E')\nonumber\\
&
:=
-(\partial_{E}\mu)(E',E')
\cdot
\int_0^{2\pi}
\sum_{j=1}^2\partial_j \big(\la (\nabla_Sv\circ  H_\omega)(x,.,E'),R(\omega)\big(\cos(s),\sin(s),0\big)\ra\big)(0)\nonumber\\
&
\cdot
\la R(\omega)\big(\cos(s),\sin(s),0\big)
,\ol\Omega_j\ra  ds\nonumber\\
&
=-\pi\ (\partial_{E}\mu)(E',E')(\Delta_Sv)(x,\omega,E')
\eea

By (\ref{nab0})
\[
(\omega\cdot\nabla_Sv)(x,\omega,E')=0.
\]  
In addition, it is well-known that for the Laplace-Beltrami operator it holds
\[
\la \psi,\Delta_Sv\ra_{L^2(G\times S\times I)}=
-\int_G\int_S\int_{I'}\la \nabla_S\psi(x,\omega,E),\nabla_Sv(x,\omega,E)\ra dx d\omega dE'
\]
and so 
\bea\label{add-a} 
&
\int_G\int_{I'}
\int_S\psi(x,\omega,E')\hat{\sigma}_{2}(x,E',E')
\nonumber\\
&
\cdot
\Big(\int_{0}^{2\pi}\la (\nabla_S v)(x,\gamma(E',E,\omega)(s),E),{\p \gamma{E}}(E',E,\omega)(s)\ra ds\Big)_{\Big|E=E'} d\omega dE' dx
\nonumber\\
&
=
\pi\ \int_G\int_{I'}
\int_S\psi(x,\omega,E')\hat{\sigma}_{2}(x,E',E')(\partial_{E}\mu)(E',E')
\la \nabla_S\psi(x,\omega,E),\nabla_Sv(x,\omega,E)\ra dx d\omega dE.
\eea
Hence
we conclude from (\ref{lo-6-a}) and (\ref{lo-6}) 
(by removing the cancelling terms and rearranging the terms) finally 

\begin{theorem}\label{final-bi-form}
Suppose that the assumptions of Theorem \ref{var-th-a} are valid.
Then for $v\in \mc D_{(0)}$
\bea\label{lo-6-f}
&
B_2(\psi,v)=\nonumber\\
&
\int_{G}\int_{I'}\int_S\psi(x,\omega,E')\Big({\rm p.f.}\int_{E_0}^{E'}{1\over{E'-E}}
\hat{\sigma}_{2}(x,E',E) \nonumber\\
&
\cdot
\int_{0}^{2\pi}{\p v{E}}(x,\gamma(E',E,\omega)(s),E)ds  dE\Big) dE' d\omega dx\nonumber\\
&
+
2\pi\ \int_G\int_{I'}\int_S\psi(x,\omega,E')\hat{\sigma}_{2}(x,E',E') 
{\p v{E}}(x,\omega,E') d\omega dE' dx
\nonumber\\
&
+
\int_G\int_S\int_{I'}\psi(x,\omega,E')
\Big(
{\rm p.f.}\int_{E_0}^{E'}{1\over{E'-E}}
\hat{\sigma}_{2}(x,E',E)\nonumber\\
&
\cdot
\int_{0}^{2\pi}\la (\nabla_S v)(x,\gamma(E',E,\omega)(s),E),
{\p \gamma{E}}(E',E,\omega)(s)\ra ds  dE\Big)dE' d\omega dx
\nonumber\\
&
+
\pi\ \int_G\int_{I'}
\int_S\psi(x,\omega,E')\hat{\sigma}_{2}(x,E',E')(\partial_{E}\mu)(E',E')
\la \nabla_S\psi(x,\omega,E),\nabla_Sv(x,\omega,E)\ra dx d\omega dE\nonumber\\
&
+\int_G\int_S\int_{I'}\psi(x,\omega,E')
\Big(
{\rm p.f.}\int_{E_0}^{E'}{1\over{E'-E}}{\p {\hat{\sigma}_{2}}{E}}(x,E',E) \nonumber\\
&
\cdot
\int_{0}^{2\pi}v(x,\gamma(E',E,\omega)(s),E)ds  dE\Big)dE' d\omega dx
\nonumber\\
&
+2\pi\ \int_G\int_I\int_S\psi(x,\omega,E){\p {\hat{\sigma}_{2}}{E}}(x,E,E)v(x,\omega,E)dE d\omega dx .
\eea
\end{theorem}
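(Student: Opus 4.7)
My plan is to follow the same two-step strategy as in the proof of Theorem \ref{var-th-a}, Part B, but now integrating by parts with respect to the \emph{lower} energy variable $E$, so as to exploit the test-function boundary condition $v(\cdot,\cdot,E_0)=0$ in place of the solution boundary condition $\psi(\cdot,\cdot,E_m)=0$. First I would package the angular and $\psi$-dependence into a single kernel
\begin{align*}
f(x,E',E):=\hat\sigma_2(x,E',E)\int_S\int_0^{2\pi}\psi(x,\omega,E')\,v(x,\gamma(E',E,\omega)(s),E)\,ds\,d\omega,
\end{align*}
exchange the order of the outer $d\omega$ and the Hadamard p.f.\ integration by the Fubin-type Lemma \ref{ad-le-5} (applied in its $\int_{E_0}^{E'}$ version, which goes through by the same argument mutatis mutandis), and then apply Lemma \ref{ad-le-3-aa} to rewrite
\begin{align*}
{\rm p.f.}\int_{E_0}^{E'}\frac{f(x,E',E)}{(E'-E)^2}\,dE = -{\rm p.f.}\int_{E_0}^{E'}\frac{\partial_E f(x,E',E)}{E'-E}\,dE-\partial_E f(x,E',E')_- -\frac{f(x,E',E_0)}{E'-E_0}.
\end{align*}
This yields a decomposition $I=I_1+I_2+I_3$ of the expression (\ref{b2-a}) for $B_2(\psi,v)$.

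Next I would expand $\partial_E f$ by the product rule: the variable $E$ enters $f$ through $\hat\sigma_2$, through the last slot of $v$, and through the curve $\gamma(E',E,\omega)(s)$, the last of which contributes a $\langle\nabla_S v,\partial_E\gamma\rangle$ term via the chain rule on the sphere developed in section \ref{taylor-S}. Substituting into $I_1$ and swapping the $\omega$-integration back through the p.f.\ integral via (\ref{var-for-12}) produces exactly the first three summands of \eqref{lo-6-f}: the p.f.\ term with $\partial_E v$, the p.f.\ term with $\nabla_S v$ paired against $\partial_E\gamma$, and the p.f.\ term with $\partial_E\hat\sigma_2$. The third piece $I_3$ vanishes identically because $f(x,E',E_0)$ contains the factor $v(\cdot,\cdot,E_0)=0$; this is the crucial place where the requirement $v\in\mc D_{(0)}$ is used.

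The hard part will be unwinding the boundary term $I_2=-\int_G\int_{I'}\partial_E f(x,E',E')_-\,dE'\,dx$, whose three summands arise from evaluating $\partial_E f$ along the diagonal $E=E'$. Two of them are routine and produce the two $2\pi$-multiplied terms in \eqref{lo-6-f}. The delicate summand is
\begin{align*}
\Bigl(\int_0^{2\pi}\langle(\nabla_S v)(x,\gamma(E',E,\omega)(s),E),\,\partial_E\gamma(E',E,\omega)(s)\rangle\,ds\Bigr)_{|E=E'},
\end{align*}
where $\partial_E\gamma$ blows up like $(E'-E)^{-1/2}$ as $E\to E'$, just as in the proof of Theorem \ref{ad-le-6}. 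My plan is to rerun the exponential-map Taylor argument of Lemma \ref{aij} verbatim, with the role of $\partial_{E'}\mu$ taken over by $\partial_E\mu$. This yields a finite limit equal to $2\pi(\partial_E\mu)(E',E')(\omega\cdot\nabla_S v)(x,\omega,E')+\sum_{|\alpha|\leq 2}c_\alpha(E',\omega)\partial_\omega^\alpha v$; the tangential identity (\ref{nab0}) kills the first piece, and the computation of Remark \ref{rem-aij} identifies the second as $-\pi(\partial_E\mu)(E',E')(\Delta_S v)(x,\omega,E')$.

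Finally, integration by parts for the Laplace-Beltrami operator on the closed manifold $S$ converts the $\psi\,\Delta_S v$ integrand into $-\langle\nabla_S\psi,\nabla_S v\rangle$, producing the fourth summand of \eqref{lo-6-f}. Collecting everything and noting that the $\partial_E\hat\sigma_2(x,E,E)$ contributions coming from $I_1$ (the p.f.\ term) and from $I_2$ (the diagonal term) are distinct and therefore both survive, the six terms of \eqref{lo-6-f} emerge. The remaining bookkeeping, beyond the limit analysis above, is to justify the various p.f.\ integrals and integrations by parts under the assumed regularity; these are precisely controlled by the $(E'-E)^{\alpha/2}$ and $(E'-E)^{1/2}$ estimates of Corollaries \ref{x-le-cor}, \ref{le-reg-cor} and Lemma \ref{le-reg}, which were tailored to this setting.
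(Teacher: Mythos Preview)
Your proposal is correct and follows essentially the same approach as the paper: starting from \eqref{b2-a}, packaging into the same kernel $f$, applying Lemma~\ref{ad-le-3-aa} to lower the singularity order, expanding $\partial_E f$ by the product rule, killing the boundary term at $E_0$ via $v\in\mc D_{(0)}$, and handling the diagonal limit through the exponential-map Taylor computation of Lemma~\ref{aij}/Remark~\ref{rem-aij} followed by integration by parts for $\Delta_S$. One minor slip: with the overall minus sign in \eqref{b2-a} your $I_2$ should read $+\int_G\int_{I'}\partial_E f(x,E',E')_-\,dE'\,dx$, not $-\int$, which is what produces the correct signs in the $2\pi$ and $\pi$ terms of \eqref{lo-6-f}.
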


The variational problem (\ref{var-form}) becomes
\be\label{var-form-aa} 
B(\psi,v)=F(v),\ v\in \mc D_{(0)}
\ee
The bilinear form $B(.,.)$ is
\be\label{fb-aa}
B(\psi,v)=B_2(\psi,v)+B_1(\psi,v)+B_0(\psi,v)
\ee
in which $B_2(.,.)$ is now given by (\ref{final-bi-form}).

\begin{remark} \label{testf}
The above considerations suggest that the relevant space of test functions is
$\mc D_{(0)}$. Closer treatments reveal that we can additionally demand that the adjoint inflow boundary condition
\be\label{ad-bc}
\gamma_+(v)=0
\ee
is valid for test functions $v$
(cf. \cite{tervo18-up}, the proof of Theorem  6.7).  
\end{remark}

\subsection{Remarks on boundedness of the bilinear form}\label{bilin-bound}

We give  some preliminary 
notes  concerning for the boundedness of the bilinear forms $B_j(.,.), j=0,1,2$.

\emph{Boundedness of $B_0$}.
Define  an inner product 
\be \label{inH0}
\la \psi,v\ra_{H_0}:=\la \psi,v\ra_{L^2(G\times S\times I)}+
\la\gamma(\psi),\gamma(v)\ra_{T^2(\Gamma)}
\ee
in $C(\ol G\times S\times I)$ and an inner product
\bea\label{inhatH0}
\la \psi,v\ra_{\hat H_0}
&:=\la \psi,v\ra_{\tilde W^2(G\times S\times I)}\nonumber\\
&
=\la \psi,v\ra_{L^2(G\times S\times I)}
+
\la\omega\cdot\nabla_x \psi,\omega\cdot\nabla_x v\ra_{L^2(G\times S\times I)}+
\la\gamma(\psi),\gamma(v)\ra_{T^2(\Gamma)}
\eea
in $C^1(\ol G,C(S\times I))$.

The bilinear form $B_0:C^1(\ol G,C( S\times I))\times C^1(\ol G,C(S\times I))\to\R$ obeys the following  boundedness  condition (see \cite{tervo18-up}, Theorem 5.4):

Suppose that the assumptions  (\ref{sigma}), (\ref{ass5-a}), (\ref{ass7}), (\ref{ass-8})  are valid.
Then there exists a constant $C_0>0$  such that 
\be\label{csda29}
|B_0(\psi,v)|\leq C_0\n{\psi}_{H_0}\n{v}_{\hat H_0}\quad \forall\ \psi,\ v\in C^1(\ol G,C( S\times I)).
\ee

\emph{On boundedness of $B_1$}.
The Banach space
$L^2(G\times I,W^{\infty,\alpha}(S))$ 
and its norm is standardly defined.
The Hilbert space  $L^2(G\times S,H^{s}(I)), \ s\geq 0$ is standardly defined as well and  it is  equipped with the usual inner product.

Let $0<\kappa<{1\over 2}$, $1-2\kappa<\alpha<1$.
Define in $C(\ol G\times I,C^\alpha(S)))\cap C(\ol G\times S,C^{{1\over 2}-\kappa}(I))$ 
the weighted norm
\[
\n{\psi}_{L^2(G\times S\times I,w)}
:= 
\big(\n{\sqrt{\ln(E-E_0)}\psi}_{L^2(G\times S\times I)}^2+\n{\sqrt{\ln(E_m-E)}\psi}_{L^2(G\times S\times I)}^2
\big)^{1/2}
%+\n{\psi(.,.,E_0)}_{L^2(G\times S)}+\n{\psi(.,.,E_m)}_{L^2(G\times S)}%^2\big)^{1/2}
\]
%where $L^2:=L^2(G\times S\times I)$
and the norm
\[
\n{v}_{H_{\kappa,\alpha}}:=\big(\n{v}_{L^2(G\times S\times I)}^2+\n{v}_{L^2(G\times S\times I,w)}^2+
\n{v}_{L^2(G\times S,H^{{1\over 2}-\kappa}(I))}^2+
\n{v}_{L^2(G\times I, W^{\infty,\alpha}(S))}^2\big)^{1/2}
\]

The following estimate can be shown after some (tedious) computations:

Suppose that $\hat\sigma_1\in C^1(I',L^\infty( G\times I))$.
Then for  $\psi\in C(\ol G\times S\times I),\ v\in C(\ol G\times I,C^\alpha(S)))\cap C(\ol G\times S,C^{{1\over 2}-\kappa}(I))$
\be\label{bound-B1}
|B_1(\psi,v)|\leq C\n{\psi}_{L^2(G\times S\times I,w)} \n{v}_{H_{\kappa,\alpha}}.
\ee

\emph{On boundedness of $B_2$}.
The boundedness criteria for $B_2(.,.)$ can retrieved on the basis of the previous paragraph. 
For example, the first term  
\[
&
B_{2,1}(\psi,v):=\\
&
\int_{G}\int_{I'}\int_S\psi(x,\omega,E')\Big({\rm p.f.}\int_{E_0}^{E'}{1\over{E'-E}}
\hat{\sigma}_{2}(x,E',E)
\int_{0}^{2\pi}{\p v{E}}(x,\gamma(E',E,\omega)(s),E)ds  dE\Big) dE' d\omega dx
\]
appearing in (\ref{lo-6-f}
obeys under relevant assumptions
\be\label{bound-B21}
|B_{2,1}(\psi,v)|\leq C\n{\psi}_{L^2(G\times S\times I,w)} \n{{\p v{E}}}_{H_{\kappa,\alpha}}.
\ee
The only term in $B_2(.,.)$ which requires essentially further study is the third one,
\[
&
B_{2,3}(\psi,v):=
-\int_G\int_S\int_{I'}\psi(x,\omega,E')
\Big(
{\rm p.f.}\int_{E_0}^{E'}{1\over{E'-E}}
\hat{\sigma}_{2}(x,E',E)\nonumber\\
&
\cdot
\int_{0}^{2\pi}\la (\nabla_S v)(x,\gamma(E',E,\omega)(s),E),
{\p \gamma{E}}(E',E,\omega)(s)\ra ds  dE\Big)dE' d\omega dx.
\] 
We neglect here any formulations for $B_2(.,.)$.
We, however, conjecture that the total bilinear form obeys
\be\label{ex-bound}
|B(\psi,v)|\leq C\n{\psi}_{\mc H}\n{v}_{\widehat{\mc H}}
\ee
for $v\in\mc D_{(0)}$,
$
\psi\in \hat{\mc D}_{(0)}:=\{\psi\in C^1(\ol G,C^2(I,C^3(S)))|\ \psi(x,\omega,E_m)=0\}.
$
where 
\[
\n{\psi}_{\mc H}^2:=\n{\psi}_{H_0}^2+\n{\psi}_{L^2(G\times S\times I,w)}+\n{\psi}_{L^2(G\times I,H^1(S)}^2
+\n{\psi(.,.,E_0)}_{L^2(G\times S)}^2+\n{\psi(.,.,E_m)}_{L^2(G\times S)}^2
\]
and
\[
&
\n{v}_{\widehat{\mc H}}^2
=\n{v}_{\hat H_0}^2+\n{v}_{H_{\kappa,\alpha}}^2+
\n{v}_{L^2(G\times I,H^1(S)}^2+\sum_{|\alpha|\leq 2}\n{\partial_\omega^\alpha v}_{H_{\kappa,\alpha}}^2\\
&
+\n{v(.,.,E_0)}_{L^2(G\times S)}^2+\n{v(.,.,E_m)}_{L^2(G\times S)}^2
+\n{{\p {v}E}}_{H_{\kappa,\alpha}}^2.
\]

For example, in the case when one applies 
existence results founded on \emph{Lions-Lax-Milgram type Theorems} (e.g. \cite{treves}),  one needs Hilbert spaces structures.
To obtain Hilbert space setting the space $ W^{\infty,\alpha}(S)$ must be replaced by a Hilbert space space e.g. along the Sobolev imbedding
$
W^{\infty,\alpha}(S)\subset W^{2,s}(S)=H^s(S)
$
where $s>{2\over 2}+\alpha=1+\alpha$.
Maybe even more sophisticated function spaces and estimates are needed.
Additionally to the boundedness criteria one needs relevant coercivity (accretivity) criteria for $B(.,.)$ and a careful treatise of relevant   inflow trace and (generalized) Green  theorems.

Alternatively, to show the existence of solutions it might be useful to apply the truncating process. Using the notations of the below section \ref{tr-app} the proofs might be based on the following items.

\begin{itemize}
\item
Solve the truncated problem $T_\kappa\psi_\kappa=f,\ {\psi_\kappa}_{|\Gamma_-}=g,\ \psi_\kappa(.,.,E_m)=0$. Because of the factor $\ln(\kappa E-E)$ in the front of some coefficients of $T_\kappa$, the above figured weighted Sobolev spaces
must be utilized as solution spaces.

\item 
Show that $\n{\psi_\kappa}_{L^2(G\times S\times I)}\leq C$. Then there exists a subsequence $\{\psi_{\kappa_j}\}$ and $\psi\in L^2(G\times S\times I)$  such that $\psi_{\kappa_j}\to \psi$ weakly in $L^2(G\times S\times I)$ (Banach-Alaoglu Theorem for Hilbert spaces).

\item
Then for $v\in C_0^\infty(G\times S\times I^\circ)$
\[
&
\la\psi,T^*v\ra_{L^2(G\times S\times I)}
=\lim_{j\to\infty}\la \psi_{\kappa_j},T^*v\ra_{L^2(G\times S\times I)}\\
&
=
\lim_{j\to\infty}\la \psi_{\kappa_j},T_{\kappa_j}^*v+T^*v-T_{\kappa_j}^*v\ra_{L^2(G\times S\times I)}
\\
&
=
\lim_{j\to\infty}\la \psi_{\kappa_j},T_{\kappa_j}^*v\ra_{L^2(G\times S\times I)}
+
\lim_{j\to\infty}\la \psi_{\kappa_j},T^*v-T_{\kappa_j}^*v\ra_{L^2(G\times S\times I)}
\\
&
=\la f,v\ra_{L^2(G\times S\times I)}
\]
since 
\[
\la \psi_{\kappa_j},T_{\kappa_j}^*v\ra_{L^2(G\times S\times I)}
=\la f,v\ra_{L^2(G\times S\times I)}
\]
and since by Theorem \ref{err-th-2} below
\[
\lim_{j\to\infty}\la \psi_{\kappa_j},T^*v-T_{\kappa_j}^*v\ra_{L^2(G\times S\times I)}=0.
\]
Hence $T\psi=f$ weakly. The initial and inflow boundary conditions require further study.

\end{itemize}

Each of these items requires much further study and so we omit  detailed treatises.
As a summary, we notice that
the existence and uniqueness analysis of the  non-classical transport problem treated in this paper requires a considerable further study.

\section{On approximative transport   problem}\label{tr-app}

\subsection{Truncation }\label{trunc}
%%%%%%%%%%%%%%%%%%%%%%%%%%%%%%%%%%%%%%%%%%%%%%%%%%%%%%%%%%%%%%%%%%%%%

The following approximation scheme shows that the exact transport operator (related to M\o ller scattering) is CSDA-Fokker-Planck type operator. 
We shall below apply the strongly hyper-singular form (\ref{exth-1}) of $T$.
Equally well we could use  the expression (\ref{exth-2}) by utilizing the same techniques in cutting and approximation of hyper-singular integrals.

Recall from (\ref{exth-1}) that
\bea\label{exth-1-aa}
&
T\psi=-
{\s H}_2\big((\ol{\s K}_2\psi)(x,\omega,\cdot,E)\big)(E) 
+
{\s H}_1\big((\ol{\s K}_1\psi)(x,\omega,\cdot,E)\big)(E)\nonumber\\
&
+\omega\cdot\nabla_x\psi+\Sigma\psi
-K_r\psi
\eea
where the collision operator to be approximated is
\be\label{exth-1-bb}
K\psi=
{\s H}_2\big((\ol{\s K}_2\psi)(x,\omega,\cdot,E)\big)(E) 
-
{\s H}_1\big((\ol{\s K}_1\psi)(x,\omega,\cdot,E)\big)(E)
+K_r\psi
\ee
that is, 
\bea\label{mol-a}
&
(K\psi)(x,\omega,E)=
{\rm p.f.}\int_E^{E_m}\hat\sigma_{2}(x,E',E){1\over{(E'-E)^2}}
\int_{0}^{2\pi}\psi(x,\gamma(E',E,\omega)(s),E')dsdE'
\nonumber\\
&
-
{\rm p.f.}\int_E^{E_m}\hat\sigma_{1}(x,E',E){1\over{E'-E}}
\int_{0}^{2\pi}\psi(x,\gamma(E',E,\omega)(s),E')dsdE'
+(K_r\psi)(x,\omega,E).
\eea
Denote as in \cite{tervo18-up}, \cite{tervo18}
\be\label{psio-0a}
(K_j\psi)(x,\omega,E):=
{\rm p.f.}\int_E^{E_m}\hat\sigma_{j}(x,E',E){1\over{(E'-E)^j}}
\int_{0}^{2\pi}\psi(x,\gamma(E',E,\omega)(s),E')dsdE',\ j=1,2.
\ee

The restricted collision operator $K_r$  is an ordinary partial Schur integral operator and so it suffices only to truncate the first and second terms  in (\ref{mol-a}) that is, the operators $(K_j\psi)(x,\omega,E)$.
We assume that a "cut-off energy for primary particles" is $E'=\kappa E$ where $\kappa>1$.
Decompose the integration in (\ref{psio-0a}) as follows (\cite{tervo18})
\bea\label{psio-0c}
&
(K_j\psi)(x,\omega,E)=
{\rm p.f.}\int_E^{E_m}\hat\sigma_{j}(x,E',E){1\over{(E'-E)^j}}
\int_{0}^{2\pi}\psi(x,\gamma(E',E,\omega)(s),E')dsdE'
\nonumber\\
={}&
{\rm p.f.}\int_E^{\kappa E}\hat\sigma_{j}(x,E',E){1\over{(E'-E)^j}}
\int_{0}^{2\pi}\psi(x,\gamma(E',E,\omega)(s),E')dsdE'\nonumber\\
&
+
\int_{\kappa E}^{E_m}\hat\sigma_{j}(x,E',E){1\over{(E'-E)^j}}
\int_{0}^{2\pi}\psi(x,\gamma(E',E,\omega)(s),E')dsdE'\ {\rm for} \ j=1,2
\eea
where we noticed that the last integral is an ordinary improper integral (and so no p.f.-integration is needed for it).
Let
\bea
(K_{j,1,\kappa}\psi)
&
(x,\omega,E)\nonumber\\
:= {}&
{\rm p.f.}\int_E^{\kappa E}{1\over{(E'-E)^j}}\hat\sigma_{j}(x,E',E)
\int_{0}^{2\pi}\psi(x,\gamma(E',E,\omega)(s),E')dsdE', \label{def-K_{j}-a} \\[2mm] 
(K_{j,0,\kappa}\psi)
&
(x,\omega,E)\nonumber\\
:= {}&
\int_{\kappa E}^{E_m}\hat\sigma_{j}(x,E',E){1\over{(E'-E)^j}}
\int_{0}^{2\pi}\psi(x,\gamma(E',E,\omega)(s),E')dsdE'. \label{def-K_{j}-b}
\eea
Then
\be\label{de}
(K_j\psi)(x,\omega,E)=
(K_{j,1,\kappa}\psi)(x,\omega,E)+(K_{j,0,\kappa}\psi)(x,\omega,E)
\ee
and
\be\label{de-T}
T\psi=-K_{2,1,\kappa}\psi+K_{1,1,\kappa}\psi+\omega\cdot\nabla_x\psi+\Sigma\psi-(K_{2,0,\kappa}\psi-K_{1,0,\kappa}\psi+K_r\psi).
\ee
Since
\bea\label{psio-0d}
&
(K_{j,0,\kappa}\psi)(x,\omega,E)\nonumber\\
&
=
\int_{I'}\chi_{\R_+}(E'-\kappa E)\hat\sigma_{j}(x,E',E){1\over{(E'-E)^j}}
\int_{0}^{2\pi}\psi(x,\gamma(E',E,\omega)(s),E')dsdE'
\eea
we  find   that the operators $K_{j,0,\kappa}$ are partial Schur integral operators and hence they are bounded operators $L^2(G\times S\times I)\to L^2(G\times S\times I)$ (section \ref{ad-sec}). 
Hence it suffices to consider approximations only for the operators 
$K_{j,1,\kappa}$.

\subsection{Approximation of hyper-singular integrals for primary particles}\label{app}

Let
\[
f_1(x,\omega,E',E):=
\hat\sigma_{1}(x,E',E)
\int_{0}^{2\pi}\psi(x,\gamma(E',E,\omega)(s),E')ds.
\]
For $K_{1,1,\kappa}$ we apply the approximation
\be\label{trunc-2}
f_1(x,\omega,E',E)\approx  f_1(x,\omega,E,E)
\ee
\emph{on the interval $[E,\kappa E]$} which leads  to the approximation
\be\label{trunc-1}
(K_{1,1,\kappa}\psi)(x,\omega,E)\approx (\widetilde K_{1,1,\kappa}\psi)(x,\omega,E):=
2\pi\ \ln(\kappa E-E)\hat\sigma_{1}(x,E,E)\psi(x,\omega,E)
\ee
where we recalled that $\gamma(E,E,\omega)(s)=\omega$ and by (\ref{hada1})
\[
{\rm p.f.}\int_E^{\kappa E}{1\over{E'-E}}dE'= \ln(\kappa E-E).
\]

Next we approximate $K_{2,1,\kappa}$.
Let
\[
f_2(x,\omega,E',E):=
\hat\sigma_{2}(x,E',E)
\int_{0}^{2\pi}\psi(x,\gamma(E',E,\omega)(s),E')ds.
\]
For $K_{2,1,\kappa}$ we apply the approximation
\be\label{trunc-2-k}
f_2(x,\omega,E',E)\approx  f_2(x,\omega,E,E)+{\p {f_2}{E'}}(x,\omega,E,E)(E'-E)
\ee
\emph{on the interval $[E,\kappa E]$}. 
In virtue of  Theorem \ref{ad-le-6}, Lemma \ref{aij}, Remark \ref{rem-aij} 
(recall that $\gamma(E,E,\omega)(s)=\omega$)
\[
&
{\p {f_2}{E'}}(x,\omega,E,E)
=
2\pi\ {\p {\hat\sigma_{2}}{E'}}(x,E,E)\psi(x,\omega,E)\\
&
+\hat\sigma_{2}(x,E,E)(A(x,\omega,E,\partial_\omega)\psi)(x,\omega,E)
+2\pi\ \hat\sigma_{2}(x,E,E){\p \psi{E}}(x,\omega,E)
\]
where
\be\label{pd-term-A}
(A(x,\omega,E,\partial_\omega)\psi)(x,\omega,E):=
\sum_{|\alpha|\leq 2}
a_{\alpha}(\omega,E)(\partial_{\omega}^\alpha \psi)(x,\omega,E)
=
-\pi\ (\partial_{E'}\mu)(E,E)
(\Delta_S\psi)(x,\omega,E).
\ee
Hence we obtain the approximation
\bea\label{trunc-4}
&
(K_{2,1,\kappa}\psi)(x,\omega,E)
={\rm p.f.}\int_E^{\kappa E}{1\over{(E'-E)^2}}
 f_2(x,\omega,E',E)dE'\nonumber\\
&
\approx
(\widetilde K_{2,1,\kappa}\psi)(x,\omega,E):=
-2\pi\ {1\over{\kappa E-E}}\hat\sigma_{2}(x,E,E)\psi(x,\omega,E) \nonumber\\
&
+\ln(\kappa E-E)\hat\sigma_{2}(x,E,E)(A(x,\omega,E,\partial_\omega)\psi)(x,\omega,E)\nonumber\\
&
+2\pi\ \hat\sigma_{2}(x,E,E)\ln(\kappa E-E){\p \psi{E}}(x,\omega,E)
+2\pi\ \ln(\kappa E-E){\p {\hat\sigma_{2}}{E'}}(x,E,E)\psi(x,\omega,E)
\eea
where we used (recall  (\ref{hada1}), (\ref{hada2})) 
\[
{\rm p.f.}\int_E^{\kappa E}{1\over{E'-E}}dE'=\ln(\kappa E-E),\
{\rm p.f.}\int_E^{\kappa E}{1\over{(E'-E)^2}}dE'=-{1\over{\kappa E-E}}.
\]

Denote
\[
S_\kappa(x,E):=2\pi\ \hat\sigma_{2}(x,E,E)\ln(\kappa E-E),
\]
\[
&
\Sigma_\kappa(x,E):=\Sigma(x,E)+2\pi\ \hat\sigma_{2}(x,E,E){1\over{\kappa E-E}}\\
&
-2\pi\ \ln(\kappa E-E){\p {\hat\sigma_{2}}{E'}}(x,E,E)
+2\pi\ \ln(\kappa E-E)\hat\sigma_{1}(x,E,E),
\]
\[
(Q_\kappa(x,\omega,E,\partial_\omega)\psi)(x,\omega,E):=
\ln(\kappa E-E)\hat\sigma_{2}(x,E,E)(A(x,\omega,E,\partial_\omega)\psi)(x,\omega,E),
\]
\[
K_{r,\kappa}:=K_r+K_{2,0,\kappa}-K_{1,0,\kappa}.
\]
Then the truncated and approximated transport operator, say $T_\kappa$, is
(by (\ref{de-T}), (\ref{trunc-1}), (\ref{trunc-4}))
\bea\label{trunc-6}
&
T_\kappa\psi:=-\widetilde K_{2,1,\kappa}\psi+\widetilde K_{1,1,\kappa}\psi
+\omega\cdot\nabla_x\psi+\Sigma\psi-K_{r,\kappa}\psi
\nonumber\\
&
=
-S_\kappa(x,E){\p \psi{E}}
-Q_\kappa(x,\omega,E,\partial_\omega)\psi
+\omega\cdot\nabla_x\psi+\Sigma_\kappa\psi-K_{r,\kappa}\psi.
\eea

The restricted collision operator $K_{r,\kappa}$ is
\bea
& 
(K_{r,\kappa}\psi)(x,\omega,E)=((K_r+K_{2,0,\kappa}-K_{1,0,\kappa})\psi)(x,\omega,E)\nonumber\\
&
=
\int_{I'}\chi_{\R_+}(E'-E)\hat\sigma_{0}(x,E',E)
\int_{0}^{2\pi}\psi(x,\gamma(E',E,\omega)(s),E')dsdE'\nonumber\\
&
-
\int_{I'}\chi_{\R_+}(E'-\kappa E)\hat\sigma_{1}(x,E',E){1\over{E'-E}}
\int_{0}^{2\pi}\psi(x,\gamma(E',E,\omega)(s),E')dsdE'\nonumber\\
&
+\int_{I'}\chi_{\R_+}(E'-\kappa E)\hat\sigma_{2}(x,E',E){1\over{(E'-E)^2}}
\int_{0}^{2\pi}\psi(x,\gamma(E',E,\omega)(s),E')dsdE'\nonumber\\
&
=
\int_{I'}\hat\sigma_{r,\kappa}(x,E',E)
\int_{0}^{2\pi}\psi(x,\gamma(E',E,\omega)(s),E')dsdE'
\eea
where
\[
&
\hat\sigma_{r,\kappa}(x,E',E):=
\chi_{\R_+}(E'-E)\hat\sigma_{0}(x,E',E)
-\chi_{\R_+}(E'-\kappa E)\hat\sigma_{1}(x,E',E){1\over{E'-E}}\\
&
+
\chi_{\R_+}(E'-\kappa E)\hat\sigma_{2}(x,E',E){1\over{(E'-E)^2}}.
\]

As a conclusion we see that the \emph{approximative transport operator is of the type}
\bea\label{app-tr-op}
&
T\psi=a(x,E){\p {\psi}E}
+\sum_{|\alpha|\leq 2}b_\alpha(x,\omega,E)\partial_\omega^\alpha\psi
+\omega\cdot\nabla_x\psi+\Sigma(x,\omega,E)\psi-K_r\psi.
\eea

\subsection{Error analysis}\label{error}

The only approximations which we used above were
\[
K_{1,1,\kappa}\approx \widetilde K_{1,1,\kappa}\ {\rm and}\ K_{2,1,\kappa}\approx \widetilde K_{2,1,\kappa}
\]
where $\widetilde K_{j,1,\kappa},\ j=1,2$ are given by (\ref{trunc-1}) and (\ref{trunc-4}), respectively. The approximation $T_\kappa$ (given by (\ref{trunc-6}))  of the exact transport operator $T$ is 
\[
T_\kappa=-\widetilde K_{2,1,\kappa}+\widetilde K_{1,1,\kappa}+\omega\cdot\nabla_x+\Sigma-(K_r+K_{2,0,\kappa}-K_{1,0,\kappa}).
\]
Recalling that
\[
T=- K_{2,1,\kappa}+ K_{1,1,\kappa}+\omega\cdot\nabla_x+\Sigma-(K_r+K_{2,0,\kappa}-K_{1,0,\kappa})
\]
we see that the error is
\be\label{err+1}
T-T_\kappa=- (K_{2,1,\kappa}-\widetilde K_{2,1,\kappa})+ (K_{1,1,\kappa}-
\widetilde K_{1,1,\kappa}).
\ee

Let
\[
h_1(x,\omega,E',E):=\int_0^{2\pi} \psi(x,\gamma(E',E,\omega)(s),E')ds
\]
and for $E'\not=E$
\[
h_2(x,\omega,E',E):=
\int_{0}^{2\pi}\la \nabla_S\psi(x,\gamma(E',E,\omega)(s),E'),{\p \gamma{E'}}(E',E,\omega)(s)\ra ds .
\]
By   the proof of Theorem \ref{ad-le-6} $h_2$ is defined also for $E'=E$ and 
\[
h_2(x,\omega,E,E)=\lim_{E'\to E}h_2(x,\omega,E',E)
= 
(A(x,\omega,E,\partial_\omega)\psi)(x,\omega,E).
\]

We show

\begin{theorem}\label{err-th-1}
Suppose that $\hat\sigma_1\in C(\ol G, C^1(I'\times I))$ 
and 
$\hat\sigma_2\in C(\ol G, C^{2}(I'\times I)$. Then for $\psi\in C^1(\ol G,C^{2}(I,C^{3}(S))$
\be\label{err-es}
\n{T\psi-T_\kappa\psi}_{L^\infty(G\times S\times I)}
\leq C\n{\psi}_{C(\ol G,C^{2}(I,C^{3}(S))}(\kappa-1)^{1\over 2}.
\ee
Hence $T_\kappa\psi\to T\psi$ uniformly in $\ol G\times S\times I$ as $\kappa\to 1^+$.
\end{theorem}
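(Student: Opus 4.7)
The plan is to exploit the decomposition (\ref{err+1}), so that $T-T_\kappa = (K_{1,1,\kappa}-\widetilde K_{1,1,\kappa}) - (K_{2,1,\kappa}-\widetilde K_{2,1,\kappa})$, and estimate each difference separately in $L^\infty(G\times S\times I)$. I denote
\[
f_j(x,\omega,E',E) := \hat\sigma_j(x,E',E)\int_0^{2\pi}\psi(x,\gamma(E',E,\omega)(s),E')\,ds,\quad j=1,2,
\]
so that $K_{j,1,\kappa}\psi$ is the p.f.\ integral of $f_j(\cdot,E',E)/(E'-E)^j$ over $[E,\kappa E]$.

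First I would handle the simpler term $K_{1,1,\kappa}\psi-\widetilde K_{1,1,\kappa}\psi$. Writing $\widetilde K_{1,1,\kappa}\psi = f_1(x,\omega,E,E)\ln(\kappa E-E) = f_1(x,\omega,E,E)\cdot {\rm p.f.}\!\int_E^{\kappa E}\!\frac{dE'}{E'-E}$ by (\ref{hada1}), the difference becomes the ordinary integral
\[
\int_E^{\kappa E}\frac{f_1(x,\omega,E',E)-f_1(x,\omega,E,E)}{E'-E}\,dE'.
\]
Splitting $f_1(E',E)-f_1(E,E)$ into a $\hat\sigma_1$-Lipschitz piece (controlled by $|E'-E|$) and a piece involving $h_1(x,\omega,E',E)-h_1(x,\omega,E,E)$ which by Corollary~\ref{x-le-cor} (with $\alpha=1$) is bounded by $C\|\psi\|_{C(\ol G,C^1(S\times I))}(E'-E)^{1/2}$, the numerator is $O((E'-E)^{1/2})$. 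Integration against $(E'-E)^{-1}$ then gives the bound $C\|\psi\|(\kappa E-E)^{1/2}\le C'\|\psi\|(\kappa-1)^{1/2}$ uniformly in $(x,\omega,E)$.

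Next for the second-order term, I use the identities ${\rm p.f.}\!\int_E^{\kappa E}(E'-E)^{-2}dE' = -1/(\kappa E-E)$ and ${\rm p.f.}\!\int_E^{\kappa E}(E'-E)^{-1}dE' = \ln(\kappa E-E)$ to rewrite the Taylor approximation $\widetilde K_{2,1,\kappa}\psi$ as
\[
\widetilde K_{2,1,\kappa}\psi = f_2(x,\omega,E,E)\cdot{\rm p.f.}\!\!\int_E^{\kappa E}\!\!\frac{dE'}{(E'-E)^2} + \tfrac{\partial f_2}{\partial E'}(x,\omega,E,E)\cdot{\rm p.f.}\!\!\int_E^{\kappa E}\!\!\frac{dE'}{E'-E}.
\]
Hence $K_{2,1,\kappa}\psi-\widetilde K_{2,1,\kappa}\psi = {\rm p.f.}\!\int_E^{\kappa E} F(E')/(E'-E)^2\,dE'$ with
\[
F(E') := f_2(x,\omega,E',E) - f_2(x,\omega,E,E) - \tfrac{\partial f_2}{\partial E'}(x,\omega,E,E)(E'-E).
\]
The crucial regularity input, which is the main obstacle, is to show that $\partial_{E'}f_2(\cdot,E',E)$ is $\tfrac12$-Hölder continuous at $E'=E$; this follows by expanding $\partial_{E'}f_2$ as in (\ref{exth-4}), using the $C^2$ regularity of $\hat\sigma_2$ for the $\partial_{E'}\hat\sigma_2\cdot h_1$ and $\hat\sigma_2\cdot\partial_{E}\psi\circ\gamma$ contributions, and invoking Corollary~\ref{le-reg-cor} to control $h_2(x,\omega,E',E)-h_2(x,\omega,E,E)$ by $C\|\psi\|_{C(\ol G,C^1(I,C^3(S)))}(E'-E)^{1/2}$.

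Granted this, $F$ satisfies $F(E)=0$ and $F'(E^+)=0$, so by definition (\ref{def-h2}) the p.f.\ integral coincides with the ordinary integral, and the Taylor-with-integral-remainder representation
\[
F(E') = (E'-E)\int_0^1\bigl[\tfrac{\partial f_2}{\partial E'}(x,\omega,E+t(E'-E),E)-\tfrac{\partial f_2}{\partial E'}(x,\omega,E,E)\bigr]dt
\]
together with the $1/2$-Hölder bound yields $|F(E')|\le C\|\psi\|_{C(\ol G,C^2(I,C^3(S)))}(E'-E)^{3/2}$. Integrating against $(E'-E)^{-2}$ gives $2C\|\psi\|(\kappa E-E)^{1/2}\le C'\|\psi\|(\kappa-1)^{1/2}E_m^{1/2}$. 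Combining the two estimates yields (\ref{err-es}), and uniform convergence $T_\kappa\psi\to T\psi$ as $\kappa\to 1^+$ follows immediately.
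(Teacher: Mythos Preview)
Your proposal is correct and follows essentially the same approach as the paper's proof: the same decomposition via (\ref{err+1}), the same functions $f_j$, Corollary~\ref{x-le-cor} for the first-order term, and the Taylor integral remainder together with Corollary~\ref{le-reg-cor} for the second-order term. Your write-up is in fact slightly more explicit in two places the paper glosses over (the splitting that handles the $\hat\sigma_1$ factor, and the observation that $F(E)=F'(E^+)=0$ reduces the p.f.\ integral to an ordinary one), but the argument is otherwise identical.
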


\begin{proof}
Recall that
\[
( K_{j,1,\kappa}\psi)(x,\omega,E)={\rm p.f.}\int_E^{\kappa E}{1\over {(E'-E)^j}}f_j(x,\omega,E',E) dE'
\]
where
\[
f_j(x,\omega,E',E)=
\hat\sigma_{j}(x,E',E)
\int_{0}^{2\pi}\psi(x,\gamma(E',E,\omega)(s),E') .
\]
For $K_{1,1,\kappa}$ we used the approximation
\be\label{trunc-2-a}
f_1(x,\omega,E',E)\approx  f_1(x,\omega,E,E)
\ee
{on the interval $[E,\kappa E]$}. 
Hence the error is
\be\label{trunc-1-a}
|(K_{1,1,\kappa}\psi)(x,\omega,E)-(\widetilde K_{1,1,\kappa}\psi)(x,\omega,E)|
=
\Big|{\rm p.f.}\int_E^{\kappa E}{{f_1(x,\omega,E',E)-f_1(x,\omega,E,E)}\over{E'-E}}dE'\Big|
\ee
In virtue of Corollary \ref{x-le-cor} for $E'\geq E$ (we omit technical details emerging from the factor $\hat\sigma_{1}(x,E',E)$)
\[
|f_1(x,\omega,E',E)-f_1(x,\omega,E,E)|\leq C\n{\psi}_{C(\ol G,C^1(S\times I))}(E'-E)^{1\over 2}.
\]
Hence the p.f.-integral in (\ref{trunc-1-a}) is an ordinary improper integral and
\bea\label{trunc-1-b}
&
|(K_{1,1,\kappa}\psi)(x,\omega,E)-(\widetilde K_{1,1,\kappa}\psi)(x,\omega,E)|
\leq 
\int_E^{\kappa E}\Big|{{f_1(x,\omega,E',E)-f_1(x,\omega,E,E)}\over{E'-E}}\Big|dE'\nonumber\\
&
\leq 
\int_E^{\kappa E}C_1' \n{\psi}_{C(\ol G,C^1(S\times I))}(E'-E)^{{1\over 2}-1}dE' ={2}C_1'\n{\psi}_{C(\ol G,C^1(S\times I))}(\kappa-1)^{1\over 2}E^{\alpha\over 2}\nonumber\\
&
\leq {2}C_1'E_m^{1\over 2}\n{\psi}_{C(\ol G,C^1(S\times I))}(\kappa-1)^{1\over 2}.
\eea

For $K_{2,1,\kappa}$ we used the approximation
\be\label{err-2}
f_2(x,\omega,E',E)\approx  f_2(x,\omega,E,E)+{\p {f_2}{E'}}(x,\omega,E,E)(E'-E)
\ee
{on the interval $[E,\kappa E]$}.
In virtue of Taylor's formula
\bea\label{err-1}
&
f_2(x,\omega,E',E)=  f_2(x,\omega,E,E)+\int_0^1{\p {f_2}{E'}}(x,\omega,E+t(E'-E),E)dt\ (E'-E)\nonumber\\
&
=
f_2(x,\omega,E,E)+
{\p {f_2}{E'}}(x,\omega,E,E)(E'-E)\nonumber\\
&
+
\int_0^1\big({\p {f_2}{E'}}(x,\omega,E+t(E'-E),E)-{\p {f_2}{E'}}(x,\omega,E,E)\big)dt\ (E'-E)
\eea
Hence the error is
\bea\label{err-3}
&
|(K_{2,1,\kappa}\psi)(x,\omega,E)-(\widetilde K_{2,1,\kappa}\psi)(x,\omega,E)|
\nonumber\\
&
=
\Big|{\rm p.f.}\int_E^{\kappa E} \int_0^1{{{\p {f_2}{E'}}(x,\omega,E+t(E'-E),E)-{\p {f_2}{E'}}(x,\omega,E,E)}\over{E'-E}} dE'dt\Big|
\eea

By Theorem \ref{ad-le-6} for $E\not=E'$
\bea\label{err-4}
&
{\p {f_2}{E'}}(x,\omega,E',E) 
=
{\p {\hat\sigma_{2}}{E'}}(x,E',E)\int_{0}^{2\pi}\psi(x,\gamma(E',E,\omega)(s),E')
\nonumber\\
&
\hat\sigma_{2}(x,E',E)
\int_0^{2\pi}\la (\nabla_{S}\psi)(x,\gamma(E',E,\omega)(s),E'),
{\p {\gamma}{E'}}(E',E,\omega)(s)\ra ds
\nonumber\\
&
+
\hat\sigma_{2}(x,E',E)
\int_0^{2\pi}{\p {\psi}{E}}(x,\gamma(E',E,\omega)(s),E')ds
\eea
and
for $E'=E$
\bea\label{err-5}
&
{\p {f_2}{E'}}(x,\omega,E,E) 
=
2\pi\ {\p {\hat\sigma_{2}}{E'}}(x,E,E)\psi(x,\omega,E)
\nonumber\\
& 
+
\hat\sigma_{2}(x,E,E)(A(x,\omega,E,\partial_\omega)\psi)(x,\omega,E)
+
2\pi\ \hat\sigma_{2}(x,E,E) {\p {\psi}{E}}(x,\omega,E).
\eea

In virtue of Corollary \ref{le-reg-cor} we find by (\ref{err-3}) as in Part A
that (again we omit technical details; note that $E+t(E'-E)\geq E$)
\bea\label{err-6}
&
|(K_{2,1,\kappa}\psi)(x,\omega,E)-(\widetilde K_{2,1,\kappa}\psi)(x,\omega,E)|
\nonumber\\
&
=
\int_E^{\kappa E} \int_0^1\Big|{{{\p {f_2}{E'}}(x,\omega,E+t(E'-E),E)-{\p {f_2}{E'}}(x,\omega,E,E)}\over{E'-E}} \Big|dE'dt
\nonumber\\
&
\leq 
C_2'\n{\psi}_{C(\ol G,C^{2}(I,C^{3}(S))}
\int_E^{\kappa E}\int_0^1(t(E'-E))^{{1\over 2}-1}dE'  dt\nonumber\\
&
\leq
C_2'{4}E_m^{1\over 2}\n{\psi}_{C(\ol G,C^{2}(I,C^{3}(S))}(\kappa-1)^{1\over 2}.
\eea
This completes the proof.
\end{proof}

The formal adjoint $T^*$ of $T$ is given by (see section \ref{ad-hada-op})
\bea\label{ad-T}
&
(T^*v)(x,\omega',E')=
-
{\rm p.f.}\int_{E_0}^{E'}{1\over{(E'-E)^2}}
\hat{\sigma}_{2}(x,E',E) 
\cdot
\int_{0}^{2\pi}v(x,\gamma(E',E,\omega)(s),E)ds  dE
\nonumber\\
&
+
{\rm p.f.}\int_{E_0}^{E'}{1\over{E'-E}}\hat\sigma_1(x,E',E)
\int_0^{2\pi}
v(x,\gamma(E',E,\omega')(s),E)dsdE
\nonumber\\
&
-\omega\cdot\nabla_xv+\Sigma^*v-K_r^*v,\ v\in \mc D_{(0)}
\eea
where $\mc D_{(0)}$ is a relevant space of test functions
\[
\mc D_{(0)}:=\{v\in C^1(\ol G,C^2(I,C^3(S)))|\ v(x,\omega,E_0)=0\}.
\]

The formal adjoint $T_\kappa^*$ of $T_\kappa$ is (say for $v\in C_0^2(G\times S\times I^\circ)$)
\bea\label{err-7}
&
T_\kappa^*v:=
{\p {(S_\kappa v)}{E}}
-Q_\kappa^*(x,\omega,E,\partial_\omega)v
-\omega\cdot\nabla_x v+\Sigma^*_\kappa v-K^*_{r,\kappa}v\nonumber\\
&
=S_\kappa {\p v{E}}-Q_\kappa^*(x,\omega,E,\partial_\omega)v
-\omega\cdot\nabla_x v+{\p {S_\kappa }{E}}v+ \Sigma^*_\kappa v-K^*_{r,\kappa}v.
\eea
Here
\[
&
\Big({\p {S_\kappa }{E}}+ \Sigma^*_\kappa \Big)(x,E')
=\Sigma(x,E')+2\pi\ \Big({\p {\hat\sigma_2}{E'}}(x,E',E')+
{\p {\hat\sigma_2}{E}}(x,E',E')\Big)\ln(\kappa E'-E')
\\
&
+2\pi\ \hat\sigma_2(x,E',E'){1\over {E'}}
+
2\pi\ \hat\sigma_2(x,E',E'){1\over {\kappa E'-E'}}
\\
&
-
2\pi\ {\p {\hat\sigma_2}{E'}}(x,E',E')\ln(\kappa E'-E')
+2\pi\ \hat\sigma_1(x,E',E')\ln(\kappa E'-E')
\]
and by Theorem \ref{Kr-adjoint}
\[
(K^*_{r,\kappa}v)(x,\omega,E')=
\int_{I'}\hat\sigma_{r,\kappa}(x,E',E)
\int_{0}^{2\pi}v(x,\gamma(E',E,\omega)(s),E)dsdE
\]

\begin{theorem}\label{err-th-2}
Suppose that $\hat\sigma_1\in C(\ol G, C^1(I'\times I))$ and 
$\hat\sigma_2\in C(\ol G, C^{2}(I'\times I))$. Then for $v\in C^1(\ol G,C^{2}(I,C^{3}(S))$
\be\label{err-es-a}
\n{T^*v-T^*_\kappa v}_{L^\infty(G\times S\times I)}
\leq C\n{v}_{C^1(\ol G,C^{2}(I,C^{3}(S))}(\kappa-1)^{1\over 2}.
\ee
Hence $T^*_\kappa v\to T^*v$ uniformly in $\ol G\times S\times I$ as $\kappa\to 1^+$.
\end{theorem}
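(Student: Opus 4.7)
The plan is to mimic the proof of Theorem \ref{err-th-1} but with the roles of the two endpoints of the energy interval swapped, exploiting the symmetric structure of $T$ versus $T^*$: where $T$ involved $\text{p.f.}\int_E^{E_m} \frac{1}{(E'-E)^j}(\cdot) dE'$ truncated at the upper end $E' = \kappa E$, the adjoint $T^*$ involves $\text{p.f.}\int_{E_0}^{E'} \frac{1}{(E'-E)^j}(\cdot) dE$ which should be truncated at the lower end $E = E'/\kappa$. First I would split every Hadamard finite part integral in $T^*v$ (coming from $A_1^*$ and $A_2^*$ as defined in section \ref{ad-hada-op}) as
\ben
\text{p.f.}\int_{E_0}^{E'} = \int_{E_0}^{E'/\kappa} + \text{p.f.}\int_{E'/\kappa}^{E'},
\een
where the first piece is an ordinary (improper) integral. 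I would then verify that this "far" piece, together with $K_r^*v$, reproduces precisely $K^*_{r,\kappa}v$ as written after (\ref{err-7}), because the characteristic functions $\chi_{\mathbb{R}_+}(E'-\kappa E)$ appearing in the kernel $\hat\sigma_{r,\kappa}$ are exactly $\chi_{\mathbb{R}_+}(E'/\kappa - E)$ in the integration variable $E$. Hence after this reorganization, $T^*v - T^*_\kappa v$ reduces to the difference between the hyper-singular integrals on the short interval $[E'/\kappa, E']$ and their local Taylor approximations (whose adjoint form yields the coefficients $S_\kappa \partial_E$, $Q_\kappa^*$, $\partial_E S_\kappa + \Sigma^*_\kappa$).

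Set $h_1^*(x,\omega',E',E) := \int_0^{2\pi} v(x,\gamma(E',E,\omega')(s),E)\, ds$. For the $A_1^*$ part the residue takes the form
\ben
\text{p.f.}\int_{E'/\kappa}^{E'} \frac{\hat\sigma_1(x,E',E)\, h_1^*(x,\omega',E',E) - \hat\sigma_1(x,E',E')\, h_1^*(x,\omega',E',E')}{E'-E}\, dE.
\een
The dual version of Corollary \ref{x-le-cor} (the proof carries over verbatim since $\|\gamma(E',E,\omega')(s) - \omega'\| \leq C|E'-E|^{1/2}$ is symmetric in $(E',E)$) gives $|h_1^*(x,\omega',E',E) - h_1^*(x,\omega',E',E')| \leq C\|v\|_{C(\overline G,C^\alpha(S\times I))}(E'-E)^{1/2}$, which combined with Lipschitz continuity of $\hat\sigma_1$ in its second argument bounds the integrand by $C\|v\|(E'-E)^{-1/2}$. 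Integrating over $[E'/\kappa, E']$ yields the bound $C\|v\| ((1-1/\kappa)E')^{1/2} \leq C'\|v\|(\kappa-1)^{1/2}$.

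For the $A_2^*$ part the analogous argument uses a first-order Taylor expansion in $E$ around $E = E'$ of the integrand $f_2^*(x,\omega',E',E) := \hat\sigma_2(x,E',E)\int_0^{2\pi} v(x,\gamma(E',E,\omega')(s),E)\, ds$, producing precisely the local terms $-2\pi \hat\sigma_2(x,E',E')v(x,\omega',E')/(E'-E'/\kappa)$, $\ln(\kappa E'-E') \hat\sigma_2(x,E',E')$ times the Laplace-Beltrami-type coefficient from Remark \ref{rem-aij}, $2\pi \hat\sigma_2(x,E',E')\ln(\kappa E'-E')\partial_{E'}v$, and the multiplication term $2\pi \ln(\kappa E'-E')\partial_E \hat\sigma_2(x,E',E') v$ — which by the computation of $T^*_\kappa$ recorded after (\ref{err-7}) indeed constitute all the local (non-convection, non-scattering) components. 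The residue is then controlled by the dual version of Corollary \ref{le-reg-cor} (symmetric in $(E',E)$ because $\partial_E\mu$ and $\partial_{E'}\mu$ play interchangeable roles under the estimates used in Lemma \ref{le-reg}), giving the same $(\kappa-1)^{1/2}$ rate. The main obstacle is bookkeeping: one must check that after the partial integration in $E'$ used to compute the formal adjoint of $S_\kappa \partial_E\psi$ and $Q_\kappa\psi$, the resulting boundary-type multiplication terms in the expression of $T^*_\kappa$ match exactly the local parts produced by the Taylor truncation of the adjoint hyper-singular integrals, modulo cancellations — a computation parallel to, but dual of, the one leading to (\ref{trunc-4}), which I expect to be the most error-prone step.
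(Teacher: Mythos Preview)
Your proposal is correct and follows essentially the same route as the paper: split the adjoint p.f.-integrals at $E=E'/\kappa$, identify the far piece with $K^*_{r,\kappa}$, and control the near piece by the zeroth- (for $j=1$) and first-order (for $j=2$) Taylor approximations in $E$ about $E=E'$, invoking the dual versions of Corollaries~\ref{x-le-cor} and~\ref{le-reg-cor}. The one point the paper makes explicit that you only flag as ``bookkeeping'' is that the coefficients in $T^*_\kappa$ carry $\ln(\kappa E'-E')$ whereas direct truncation of $T^*$ at $E'/\kappa$ produces $\ln(E'-E'/\kappa)$; their difference is $\ln\kappa = O(\kappa-1)$, which is harmless for the stated estimate.
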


\begin{proof}
Decompose the p.f.-integrations in (\ref{ad-T}) as follows (for  $j=1,2$)
\bea\label{err-8}
&
{\rm p.f.}\int_{E_0}^{E'}{1\over{(E'-E)^2}}
\hat{\sigma}_{2}(x,E',E) 
\cdot
\int_{0}^{2\pi}v(x,\gamma(E',E,\omega)(s),E)ds  dE
\nonumber\\
&
=
{\rm p.f.}\int_{{{E'}/\kappa}}^{E'}{1\over{(E'-E)^2}}
\hat{\sigma}_{2}(x,E',E) 
\cdot
\int_{0}^{2\pi}v(x,\gamma(E',E,\omega)(s),E)ds  dE
\nonumber\\
&
+
\int_{E_0}^{{{E'}/\kappa}}{1\over{(E'-E)^2}}
\hat{\sigma}_{2}(x,E',E) 
\cdot
\int_{0}^{2\pi}v(x,\gamma(E',E,\omega)(s),E)ds  dE
\nonumber\\
&
=:(K^*_{2,1,\kappa}v)(x,\omega,E')+(K^*_{2,0,\kappa}v)(x,\omega,E')
\eea
and
\bea\label{err-9} 
&
{\rm p.f.}\int_{E_0}^{E'}{1\over{E'-E}}\hat\sigma_1(x,E',E)
\int_0^{2\pi}
v(x,\gamma(E',E,\omega')(s),E)dsdE
\nonumber\\
&
=
{\rm p.f.}\int_{{{E'}/\kappa}}^{E'}{1\over{E'-E}}\hat\sigma_1(x,E',E)
\int_0^{2\pi}
v(x,\gamma(E',E,\omega')(s),E)dsdE
\nonumber\\
&
+
\int_{E_0}^{{{E'}/\kappa}}{1\over{E'-E}}\hat\sigma_1(x,E',E)
\int_0^{2\pi}
v(x,\gamma(E',E,\omega')(s),E)dsdE\nonumber\\
&
=:(K^*_{1,1,\kappa}v)(x,\omega,E')+(K^*_{1,0,\kappa}v)(x,\omega,E').
\eea

Let
\[
f_j^*(x,\omega,E',E):=\hat\sigma_j(x,E',E)
\int_0^{2\pi}
v(x,\gamma(E',E,\omega)(s),E)ds,\ j=1,2.
\]
Similarly to Theorem \ref{ad-le-6} one can show that
\be\label{der-a}
{\partial\over{\partial E}}\big(\int_0^{2\pi}v(x,\gamma(E',E,\omega)(s),E)ds\big)_{\Big|E=E'}
=
-(A(x,\omega,E',\partial_\omega)v)(x,\omega,E')
+
2\pi {\p {v}{E}}(x,\omega,E')
\ee
where the "-" sign in the front of $(A(x,\omega,E',\partial_\omega)v)(x,\omega,E')$ is due to the relation 
\be\label{mu-E}
\partial_{E}\mu(E,E)= {1\over E}-{1\over{E+2}}=-\partial_{E'}\mu(E,E)
\ee
Using the approximations
\[
f_1^*(x,\omega,E',E')\approx
f_1^*(x,\omega,E',E'),
\]
\[
f_2^*(x,\omega,E',E')\approx
f_2^*(x,\omega,E',E')+
{\p {f_2}{E}}(x,\omega,E',E')(E-E')
\]
on the interval $[{{E'}/\kappa},E']$ and 
noting that
\[
\lim_{\kappa\to 1^+}\ln(\kappa)=0,\ E'-{{E'}\over \kappa}={{(\kappa-1)E'}\over\kappa}
\]
the estimate (\ref{err-es-a}) is shown similarly to the proof of Theorem \ref{err-th-1}. We omit further details.

\end{proof}

\begin{remark}
The assertions of Theorem \ref{err-th-1} and \ref{err-th-2} are valid under weaker assumptions. For example, it suffices to assume only that 
$\hat\sigma_1\in C(\ol G, C^\alpha(I'\times I))$,
$\hat\sigma_2\in C(\ol G, C^{1+\alpha}(I'\times I))$ and $\psi,\ v\in C^1(\ol G,C^{1+\alpha}(I,C^{2+\alpha}(S))$
for $\alpha>0$.
\end{remark}

\section{Discussion}\label{discus}

The  paper considers an exact  linear Boltzmann transport operator related to the charged particle transport.
The hyper--singularities in the differential cross-sections of certain charged particle collisions lead to extra singular integral-partial differential terms in kinematic equations. 
The analysis showed that the terms contain the first-order partial derivatives with respect to energy $E$ combined with Hadamard finite part integral operators which are pseudo-differential-like operators. With respect to angle
$\omega$ also second-order partial derivatives appear. Additionally, some mixed terms appear (see \ref{intro-exact}).
Note especially that in the expression (\ref{intro-exact})  there is the term $\sum_{|\alpha|\leq 2}a_\alpha(\omega,E)\partial_\omega^\alpha\psi$ which is corresponding to the second order partial differential term with respect to angular variables appearing in Fokker-Plank equation. In many cases this term turns out to be an elliptic operator (on $S$) which helps the analysis and in numerical treatments it stabilizes computations. For M\o ller scattering the term is $-\mu_{E'}(E,E)\Delta_S\psi$ where $\Delta_S$ is the Laplace-Beltrami operator (Remark \ref{rem-aij}).

The pseudo-differential-like terms can be  approximated by pseudo-differential operators (\cite{tervo18} or \cite{tervo18-up}).
Moreover, in \cite{tervo18-up} we gave a further approximation  where the resulting transport operator is containing only partial derivatives and Schur partial integral operators. These 
approximations are essentially founded on Taylor's formulas
and  angular approximations of (new) primary particles.
When considering partial differential approximations the resulting approximative transport operator, as we verified in section \ref{tr-app}, is a \emph{partial integro-differential operator} of the form
\bea\label{tr-op-gene}
&
T\psi=a(x,E){\p {\psi}E}
+\sum_{|\alpha|\leq 2}b_\alpha(x,\omega,E)\partial_\omega^\alpha\psi
+\omega\cdot\nabla_x\psi+\Sigma(x,\omega,E)\psi-K_r\psi.
\eea
These approximations are known as \emph{Continuous Slowing Down Approximations} (CSDA) of BTE.
For example, 
in the dose calculation  the scattering events containing hyper--singularities  are the primary M\o ller scattering for electrons and positrons and the Bremsstrahlung. Conventionally these events have needed  the CSDA modelling like (\ref{tr-op-gene}) in numerical computations.

In this paper we additionally exposed the variational formulation for the  transport problem where no approximations are applied. In principle, the obtained variational problem can be numerically solved applying e.g. Galerkin finite element methods (FEM). The numerical computations and approximations of (hyper)-singular integrals has been studied by various methods and for various needs. In particuler, for Galerkin methods them are known in   the field of boundary element methods (BEM) (see e.g. \cite{carley}). BEM considers numerical methods for solutions of the (hyper)-singular integral equations emerging from  solutions of certain initial-boundary value problems.

The well-posedness of the transport problem that is, existence and uniqueness of solutions together with pertinent a priori estimates is a central importance. 
We suggest that the exposed variational formulations together with Lions-Lax-Milgram Theorem give one alternative to investigate well-posedness of the problems where exact operators like (\ref{intro-exact}) are involved in the (coupled) transport system.  
Other successful  methods might be e.g. semigroup-dissipativity-perturbation  methods and fixed-point (contraction) methods. 
In addition,
Sobolev regularity (say, on the scales of Sobolev-Slobodevskij spaces) of solutions  remains  open.
It is known that the transport problems have a limited Sobolev regularity but higher order \emph{weighted (co-normal) Sobolev regularity} can be achieved.
The regularity of solutions are needed e.g. in various approximation and convergence (error analysis) treatments.
We remark that the initial inflow boundary value problems related to transport problems are so called \emph{variable boundary value multiplicity} that is, the dimension of the kernel of the boundary operator is not constant
(e.g. \cite{nishitani98}, \cite{rauch94}). This makes the inflow boundary transport problems
more subtle independently of the applied methods.

\vskip10mm
{\bf Acknowledgement}. The author thanks Dr. Petri Kokkonen for useful discussions while preparing the paper.


\begin{thebibliography}{5}\setlength{\itemsep}{-0.5mm}

 

\bibitem{agoshkov}  Agoshkov, V. \emph{Boundary Value Problems for Transport Equations}, Springer Science +Business Media, 1998 

\bibitem{bellamo} Bellamo, N., A. Palczewsky and G. Toscani.  \emph{Mathematical Topics in Nonlinear Kinetic Theory}, World Scientific, 1988 
 

\bibitem{carley} Carley, M. \emph{Numerical quadratures in singular and hypersingular integrals in boundary element methods}. SIAM Journal in Scientific Computing 29, pp. 1207-1216,2007 

\bibitem{case} Case, K.M. and P.F. Zweifel. \emph{ Linear Transport Theory}, Addison-Wesley, 1967

\bibitem{cercignani} Cercignani, C. \emph{The Boltzmann Equation and Its Applications},
Springer, 1988

\bibitem{cessenat84} Cessenat, M. \emph{Th\'eor\'emes de trace $L^p$ pour des espaces de fonctions de la neutronique}, C.R. Acad. Sc. Paris, t. 299, Serie 1, number 16, 1984

 

\bibitem{chan} Chan, Y.-S. et.al. \emph{Finite part integrals and hypersingular kernels}, Advances in Dynamical Systems, 14(S2), pp. 264-269, 2007
 

\bibitem{costabel} Costabel, M. \emph{Principles of Boundary Element Methods}. Computing Physics Reports 6, pp. 243-274, 1987

 

\bibitem{dautraylionsv6}
Dautray, R., Lions, J-L. \emph{Mathematical Analysis and Numerical Methods for Science and Technology}, Vol. 6. Evolution Problems II, Springer, 1999.

\bibitem{carmo} do Carmo, M.P. \emph{Riemannian Geometry}. Birkh\"auser, 1992.


\bibitem{duderstadt} Duderstadt, J. and W. Martin.  \emph{Transport Theory}, Wiley, 1979

\bibitem{egger14}  Egger, H. and M. Schlottbom. \emph{An $L^p$-theory for stationary radiative transfer}. Appl. Anal. 93 (6), pp. 1283-1296, 2014


\bibitem{estrada} Estrada R. and R.P. Kanwal. \emph{Singular Integral Equations}. Birkh\"auser, 2000.

\bibitem{frank10} Frank, M., M. Herty and A.N. Sandjo. \emph{ Optimal  radiotherapy 
treatment planning governed by kinetic equations}, Math. Mod. Meth. Appl. Sci. 20 (4), 2010, pp. 661-678


\bibitem{friedrich44} Friedrichs, K. O., \emph{The identity of weak and strong extensions of differential operators}, Trans. Amer. Math. Soc. 55, pp. 132-151, 1944.

 
\bibitem{grisvard} Grisvard, P. \emph{Elliptic problems in nonsmooth domains}, vol. 24 of
Monographs and Studies (Mathematics), Pitman, 1985

 
\bibitem{halmos}  Halmos P.R. and V.S. Sunder. \emph{Bounded Integral Operators on $L^2$ Spaces}, Springer 1978
 
 
\bibitem{hsiao} Hsiao, G.C. and Wendland, W.L. \emph{Boundary Integral Equations}, Springer, 2008.

 

\bibitem{lee} Lee, J., \emph{Introduction to Smooth Manifolds}, Springer, 2002.

\bibitem{lifanov}
Lifanov I.K., L.N. Poltavskii and G.M. Vainikko. \emph{Hypersingular Integral Equations and Their Applications}. Chapman-Hall/CRC, 2004
 
\bibitem{lorence} Lorence, L., Morel, J., Valdez, G., \emph{Physics Guide to CEPXS: A Multigroup Coupled Electron-Photon Cross-Section Generating Code}, Sandia Report, 1989.

 

\bibitem{martin-rizzo} Martin, P.A. and F.J. Rizzo. \emph{Hypersingular integrals: How smooth must be the density be?}. Int. Journal for Num. Meth. in Engineering 39, pp. 687-704, 1996
 

\bibitem{mokhtarkharroubi} Mokhtar-Kharroubi, M. \emph{Mathematical Topics in Neutron Transport Theory}, World Scientific, 1997.

\bibitem{mukherjee} Mukherjee, S., Q. Wu and  D.-X. Zhou. \emph{Learning gradients on manifolds}. Bernoulli 16 (1), pp. 181-207, 2010
 
\bibitem{nishitani98} Nishitani T. and M. Takayama. \emph{Characteristic initial boundary value problems for symmetric hyperbolic systems}. Osaka J. Math. 35, pp. 629-657, 1998
 
\bibitem{pomraning} Pomraning, G.C.  \emph{The Equations of Radiation Hydrodynamics}, Pergamon Press, 1973

\bibitem{pomraning92} Pomraning, G.C. \emph{The Fokker-Plank operator as an asymptotic limit}. Math. Models Methods Appl. Sci. 2, pp. 21-36, 1992 

 
\bibitem{rauch94}  Rauch J. \emph{Boundary value problems with nonuniformly characteristic boundary}. J. Maths. Pure et. Appliqu\'ees 73, pp. 347-353, 1994 
 

\bibitem{schwarz} Schwarz, L. \emph{Mathematics for Physical Sciences}, Herman Paris, 1966

 

%\bibitem{tervo07}  Tervo, J., \emph{On coupled Boltzmann transport equation related to radiation therapy}, J. Math. Anal. Appl. 335:2, 819-840, 2007.

\bibitem{tervo17-up} Tervo, J. and Kokkonen, P. \emph{On existence of $L^1$-solutions for coupled Boltzmann transport equation and  radiation therapy treatment planning}, ArXiv: 1406.3228v2 $[$ math.OC $]$ 1 Feb 2017 

\bibitem{tervo18-up} Tervo, J.  P. Kokkonen, M. Frank and M. Herty  \emph{On Existence of $L^2$-solutions of Coupled Boltzmann Continuous Slowing Down Transport Equation System}, arXiv:1603.05534v3 $[$ math.OC $]$ 5 Apr 2018

\bibitem{tervo17} Tervo, J.,  P. Kokkonen, M. Frank and M. Herty \emph{On existence of solutions for Boltzmann Continuous Slowing Down transport Equation}, J. Math. Anal. Appl. 460 :1, 271-301, 2018

 

\bibitem{tervo18} Tervo, J.  P. Kokkonen, M. Frank and M. Herty. \emph{On approximative linear Boltzmann transport equation for charged particle transport}. To appear in Math. Models Methods  Appl. Sci., 2018.

\bibitem{tervo19} Tervo, J. and M. Herty. \emph{On approximation of a hyper-singular transport operator and existence of solutions}. Submitted manuscript, 2019 

\bibitem{treves} Treves, F. \emph{Basic Linear Partial Differential Equations}, Academic Press, 1975

\bibitem{ukai} Ukai, S. and T. Yang. \emph{Mathematical Theory of Boltzmann Equation}, Reprint, City University of Hong Kong, 2010 

\bibitem{villani}  Villani, C. \emph{A review of mathematical topics in collision kinetic theory}, in Handbook of Mathematical Fluid Dynamics (Vol. 1), edited by S. Friedlander and D. Serre, Elsevier Science, 2002
 

\bibitem{wilson} Wilson J.W. et.al. \emph{Transport Methods and Interactions for Space Radiations}, NASA RP-1257, pp. 14-17, 1991

\end{thebibliography}
\end{document}